\theoremstyle{plain}
\numberwithin{equation}{section}
\newtheorem{theorem}{Theorem}[section] 
\newtheorem{lemma}{Lemma}[section] 
\newtheorem{proposition}{Proposition}[section]
\newtheorem{remark}{Remark}[section]
\definecolor{brown}{rgb}{0.5,0,0}
\definecolor{backgroundcolor}{rgb}{0.98, 0.92, 0.73}
\def\N{\mathbb N}% natural number
\def\R{\mathbf R}
\def\C{\mathbb C}
\def\cM{\mathcal M}
\def\bB{\mathcal B} %bilinear form
\def\cn{\mathsf n}
\newif\ifprint
\author[Olivier Lafitte]{Olivier Lafitte}
\address[O. Lafitte]{IRL 3457 CRM-CNRS, Centre de Recherche Mathematique, Universit\'e de Montr\'eal, 2940-2950 Polytechnique Rd, Montreal, Quebec H3T 1J4, Canada}
\email{\href{mailto:> lafitte@crm.umontreal.ca}{ lafitte@crm.umontreal.ca}}
\address[O. Lafitte]{Laboratoire Analyse G\'eom\'etrie et Applications, Universit\'e Sorbonne Paris Nord,  93430 - Villetaneuse, France}
\email{\href{mailto:> lafitte@math.univ-paris13.fr}{ lafitte@math.univ-paris13.fr}}
\author[Tiến-Tài Nguyễn]{ Tiến-Tài Nguyễn}
\address[T.-T. Nguyễn ]{Laboratoire Analyse G\'eom\'etrie et Applications, Universit\'e Sorbonne Paris Nord,  93430 - Villetaneuse, France}
\email{\href{mailto: T.-T. Nguyễn <tientai.nguyen@math.univ-paris13.fr>}{tientai.nguyen@math.univ-paris13.fr}}
\begin{document}
\allowdisplaybreaks

\setpagewiselinenumbers
\setlength\linenumbersep{100pt}
%\modulolinenumbers[1]
%\linenumbers

\title[Rayleigh-Taylor instability]{Spectral analysis of the incompressible viscous Rayleigh-Taylor system in $\R^3$}

\begin{abstract}

The linear instability study of the viscous Rayleigh-Taylor model in the neighborhood of a laminar smooth increasing density profile $\rho_0(x_3)$  amounts to the study of the following ordinary differential equation of order 4:
\begin{equation}\label{MainEq}
-\lambda^2 [ \rho_0 k^2 \phi - (\rho_0 \phi')'] = \lambda \mu (\phi^{(4)} - 2k^2 \phi'' + k^4 \phi) - gk^2 \rho_0'\phi,
\end{equation}
where $\lambda$ is the growth rate in time, $k$ is the wave number transverse to the density profile.

In the case of $\rho'_0\geq 0$ compactly supported, we provide a spectral analysis showing that in accordance with the results of \cite{HL03}, there is an infinite sequence of non trivial solutions $(\lambda_n, \phi_n)$ of \eqref{MainEq}, with $\lambda_n\rightarrow 0$ when $n\rightarrow +\infty$ and $\phi_n\in H^4(\R)$. In the more general case where  $\rho_0'>0$ everywhere and $\rho_0$ converges at $\pm\infty$ to finite limits $\rho_{\pm}>0$, we prove that  there exist finitely non trivial solutions $(\lambda_n, \phi_n)$ of \eqref{MainEq}. The line of investigation is to reduce  both cases to the study of a self-adjoint operator on a compact set.
\end{abstract}

\date{\bf \today \; at \, \currenttime}

\subjclass[2010]{34B05, 47A05, 47A55, 47B07, 76D05}

\keywords{Rayleigh--Taylor instability, linear growth rate, self-adjoint operator, spectral theory}

\maketitle

\tableofcontents

\section*{Data availability statement}
All data generated or analysed during this study are included in this published article (and its supplementary information files).

\section{Introduction}

The Rayleigh--Taylor (RT) instability, studied first by Lord Rayleigh in \cite{Str83} and then Taylor \cite{Tay50} is well known as a gravity-driven instability in two semi-infinite inviscid and incompressible  fluids when the heavy one is on top of the light one. It  has attracted much attention due to both its physical and mathematical importance. Two applications of worth mentioning are implosion of inertial confinement fusion capsules \cite{Lin98} and core-collapse of supernovae  \cite{Rem00}.    
For a detailed physical comprehension of the RT instability, we refer to three survey papers  \cite{Kull91, Zhou17_1, Zhou17_2}. Mathematically speaking, for the inviscid and incompressible regime in the neighborhood of a smooth density profile, the classical  RT instability was investigated by Lafitte \cite{Laf01}, Guo and Hwang \cite{GH03} and  Helffer and Lafitte \cite{HL03}.

In this paper, we pay  attention to the viscous RT instability.  One of the first studies on the viscous RT can be seen in the book of Chandrasekhar \cite[Chapter X]{Cha61} considering two uniform viscous fluid separated by a horizontal boundary and  generalizing the classical result of Rayleigh and Taylor.  We are  concerned with the following Navier--Stokes equation describing  the motion of a nonhomogeneous incompressible viscous fluid in the presence of a uniform gravitational field in $\R^3$, 
\begin{equation}\label{EqNS}
\begin{cases}
\partial_t \rho +\text{div}(\rho \vec u) =0,\\
\partial_t(\rho \vec u) + \text{div}(\rho \vec u\otimes \vec u) +\nabla P =\mu \Delta\vec u - \rho \vec{g},\\
\text{div}\vec u=0,
\end{cases}
\end{equation}
where $t\geqslant 0, x=(x_1,x_2,x_3) \in \R^3$. The unknowns $\rho := \rho(x,t)$, $\vec u := \vec u(x,t)$ and $P :=P(x,t)$ denote respectively the density, the velocity and the pressure of the fluid, while $\mu>0$ is the viscosity coefficient and $ \vec{g}:= (0,0,g)$,  $g > 0$ being the gravitational constant.

Let $\rho_0>0$ be a $C^1$-function depending only on $x_3$ and let $P_0$ be another function of $x_3$ given by $P_0' = -g\rho_0 $ with $'=d/dx_3$.  Note that it is not a physical solution of the system, because $P_0\rightarrow \pm \infty$ when $x_3\rightarrow \pm \infty$, but, nevertheless, it is a relevant model to study. Then, $(\rho, \vec u,P)=(\rho_0, \vec 0,P_0)$ is an equilibrium state of \eqref{EqNS}. The linear  RT instability of  \eqref{EqNS} in the vicinity $(\rho_0(x_3),\vec 0, P_0(x_3))$  amounts to the investigation of the parameter $\lambda\in \mathbb{C}$ (Re$\lambda>0$) such that  there exists a bounded solution $\phi \in H^4(\R)$ of the following ordinary differential equation
\begin{equation}\label{4thOrderEqPhi}
-\lambda^2 ( \rho_0 k^2 \phi - (\rho_0 \phi')') = \lambda \mu (\phi^{(4)} - 2k^2 \phi'' + k^4 \phi) - gk^2 \rho_0'\phi,
\end{equation}
where $k>0$ is the transverse wave number \textit{being fixed} and $\phi$ is  the third component of the perturbed velocity. Our desired solution $\phi$ decays to zero at $\pm \infty$, i.e. 
\begin{equation}\label{BoundaryCondition}
\lim_{x_3\to \pm \infty}\phi(x_3) = 0.
\end{equation}
In this case, such a  $\lambda$ is a growth rate of the instability  or is a characteristic value of the linearized problem (see \cite[Chapter X, Sections 92-93]{Cha61}). 

The derivation of \eqref{4thOrderEqPhi} will be shown in the next section. 

Let us discuss about the mathematical study. For the inviscid problem (i.e. $\mu=0$ in \eqref{EqNS} and \eqref{4thOrderEqPhi}), \eqref{4thOrderEqPhi} can be seen as an eigenvalue problem with eigenvalue  $\lambda^2$. Multiple eigenvalues of the inviscid problem   are first mentioned by Cherfils and Lafitte \cite{CL00} through the precise study of the inviscid ODE for a specific profile. Mention also the paper of Mikaelian \cite{Mik96} for the connection between the Rayleigh \eqref{4thOrderEqPhi} and the Schrödinger equations. Lafitte \cite{Laf01} then observed that possible growth rates  $\lambda$'s for the inviscid Rayleigh-Taylor problem (not only the largest one) are such that $\lambda^2$ is an eigenvalue of a suitable self-adjoint operator and described this spectrum, for large values of $k$, as the eigenvalues of a 1D Schrodinger operator, which is generalized by Helffer and Lafitte \cite{HL03}.

On the other side, the natural variational structure of the inviscid problem pointed out in \cite{Cha61} and used by Guo and Hwang \cite{GH03} describes the square of the maximum growth rate $\lambda_1^2$ as the maximum of of the Rayleigh quotient
\[
\frac{gk^2\int_{\R}\rho_0'|\phi|^2dx_3}{\int_{\R}\rho_0(|\phi'|^2+k^2|\phi|^2)dx_3}
\]
and is used to obtain a proof of the nonlinear instability. With this argument, it can be noticed that the multiple eigenvalues are given by $\{\lambda_2,\dots, \lambda_j,\dots\}$, where $\lambda_j^2$ is equal to
\[
\max_{\phi\in H^1_k}\frac{gk^2\int_{\R}\rho_0'|\phi|^2dx_3}{\int_{\R}\rho_0(|\phi'|^2+k^2|\phi|^2)dx_3}
\]
where $H^1_k=\{\phi\in H^1(\R), \int_{\R}\phi\phi_ldx_3=0, \forall l\leq j\}$, $\phi_l$ is a nontrivial eigenfunction associated with $\lambda_l$. 
This Rayleigh quotient cannot be used here due to the presence of the viscosity term.  

For the viscous problem,  Jiang, Jiang and Ni \cite{JJN13} used   a modified variational approach, described by Guo and Tice \cite{GT11} , where a bootstrap argument yields the largest growth rate and a corresponding solution being regular under the assumption  $\rho_0' \in C_0^{\infty}(\R), \inf_{\R} \rho_0 >0$ and $\rho_0'(x_3) > 0$ for some points $x_3 \in \R$. 

As far as we could find, no other authors performed studies of the discrete spectrum of the inviscid or the viscous linearized RT instability.  We aim at the viscous study  in the mathematical spirit of Helffer-Lafitte \cite{HL03}.

We then illustrate  our spectral analysis.

Throughout this paper, we consider $\rho_0$ as an increasing function, i.e. $\rho_0'\geq 0$.  For an increasing density profile $\rho_0$, we show in Appendix \ref{Preliminaries} that $\lambda$ is real and bounded by $\sqrt{\frac{g}{L_0}}$, with $L_0^{-1}:= \sup_{\R}\frac{\rho_0'}{\rho_0}$. Since our goal is to study the unstable modes, we restrict our study to the case $\lambda >0$.

The ODE \eqref{4thOrderEqPhi} rewrites as a linear system of ODEs on $(\phi,\phi',\phi'',\phi''')^T$. As the solution of this system must tend to 0 when $x_3\rightarrow \pm \infty$, using the fact that the profile $\rho_0$ goes to $\rho_{\pm}$ at $\pm \infty$, we are able to deduce the stable linear space $S_+$ at $+ \infty$ and the unstable linear space $S_-$ at $-\infty$, that are vector spaces  of dimension two. Hence,  we transform the problem for the normal modes on $\R$ into an ODE problem stated on a compact interval $(x_-,x_+)$ with appropriate boundary conditions deduced from the outer solutions.   In the case of a compactly supported $\rho_0'$  (the simplest case of a convergence), they are described by  \eqref{LeftBoundary}-\eqref{RightBoundary}.  In the case of a non compactly supported $\rho_0'$, they are described  by \eqref{GenLeftBound}-\eqref{GenRightBound}. Note that, if $\rho_0'$ is compactly supported, the natural choice of  $x_{\pm}$ is to choose the ends of $\mbox{supp}\rho_0'$. However, if $\rho_0'$ non compactly supported, $x_{\pm}$ are deduced from the behavior at $\pm \infty$ of the outer solutions and depend also on $k$ and $\lambda$.

In order to solve \eqref{4thOrderEqPhi} on $(x_-,x_+)$, the crucial tool in our study is to construct two bilinear forms, that are continuous and coercive, $\bB_{\lambda}$ (denoted respectively by $\bB_{a,\lambda}$ \eqref{1stBilinearForm} for $\rho_0'$ being compactly supported   and  by $\bB_{x_-,x_+,\lambda}$ \eqref{2ndBilinearForm} for $\rho_0'$ being positive everwhere) such that the finding of bounded solutions of Eq. \eqref{4thOrderEqPhi} on the whole line is equivalent to the existence of solutions to  the variational problem 
\[
\lambda \bB_{\lambda}(\phi,\omega) = gk^2 \int_{x_-}^{x_+}\rho_0'\phi\omega dx_3 \quad\text{for all }\omega\in H^2((x_-,x_+)).
\]
As $\bB_{\lambda}$ is coercive on $H^2((x_-,x_+))$,  $\sqrt{\bB_{\lambda}(\cdot,\cdot)}$ is a norm on $H^2((x_-,x_+))$, and one denotes by $(H^2((x_-,x_+)))'$ the dual of $H^2((x_-,x_+))$ for the induced topology.
In view of the Riesz representation theorem, we thus define an abstract  operator 
\[
Y_{\lambda}\in \mathcal{L}(H^2((x_-,x_+)), (H^2((x_-,x_+))')
\]
(see $Y_{a,\lambda}$ in Proposition \ref{PropInverseOfR} for $\rho_0'\geq 0$  being compactly supported   and $Y_{x_-,x_+,\lambda}$ in Proposition \ref{PropPropertyB_lambda} for $\rho_0'$ being positive everywhere), associated with the norm $\sqrt{\bB_{\lambda}(\cdot,\cdot)}$ to its dual, such that 
\begin{equation}
 \bB_{\lambda}(f, \omega) = \langle Y_{\lambda}f, \omega\rangle, \quad \text{for all } \omega \in H^2((x_-,x_+))
\end{equation}
Note that it will be crucial to choose $x_-, x_+$ such that $\sqrt{\bB_{\lambda}(\cdot,\cdot)}$ is a norm on $H^2((x_-,x_+))$.
The  existence of bounded solutions of Eq. \eqref{4thOrderEqPhi} on $\R$ is thus equivalent to finding weak solutions $\phi \in H^2((x_-,x_+)$ of
\[
\lambda Y_{\lambda}\phi = gk^2\rho_0' \phi \text{ in } (H^2((x_-,x_+)))'.
\]
A classical bootstrap argument shows that $\phi$ belongs to $H^4((x_-,x_+))$, from which one deduces that $\phi$ satisfies \eqref{4thOrderEqPhi} on $(x_-,x_+)$ (in the sense of elements of $H^4((x_-,x_+))$), and the boundary conditions 
\eqref{LeftBoundary}-\eqref{RightBoundary} or \eqref{GenLeftBound}-\eqref{GenRightBound} hold. Note that, as $\phi$ belongs to $H^4((x_-,x_+))$, these boundary conditions (involving derivatives $\phi'', \phi'''$ of $\phi$ at $x_\pm$) are well defined.

We further rewrite the problem, after introducing $\cM$ the operator of multiplication by $\sqrt{\rho_0'}$ in $L^2((x_-,x_+))$, as finding $v$ (which is $\cM^{-1}Y_{\lambda}\phi$) satisfying 
\[
\cM Y_{\lambda}^{-1}\cM v =\frac{\lambda}{gk^2} v.
\]
We show that $\cM Y_{\lambda}^{-1}\cM$ is self-adjoint and compact, which enables to use the theory of self-adjoint and compact operators on the functional space $H^2((x_-,x_+))$. It is a generalization of a Sturm-Liouville problem. The discrete spectrum of the operator $\cM Y_{\lambda}^{-1}\cM$ is thus an infinite sequence of eigenvalues, denoted by $\gamma_n(\lambda)$ and let $v_n$ be an eigenfunction associated with $\gamma_n(\lambda)$.

The problem of finding characteristic values of \eqref{4thOrderEqPhi} amounts to solving all the equations
\begin{equation}\label{EqFindLambda}
\gamma_n(\lambda)= \frac{\lambda}{gk^2}.
\end{equation}

When $\rho_0'\geq 0$ is compactly supported, for each $n$,  we will show  the existence and uniqueness of a solution $\lambda_n$ to \eqref{EqFindLambda} owing first to the differentiability  in  $\lambda$ of $\gamma_n(\lambda)$ (see Lemma \ref{LemGammaCont}), which is an easy extension of  Kato's perturbation theory of \cite{Kato}, and secondly on the fact that $\lambda\rightarrow \gamma_n(\lambda)$ is decreasing in $\lambda$, through the derivative $\frac{d}{d\lambda}(\frac{1}{\gamma_n(\lambda)})$ (see Lemma \ref{LemGammaDecrease}) which exists also thanks to a similar argument of \cite{Kato}. Then $\phi_n = Y_{\lambda_n}^{-1}\cM v_n \in H^4((x_-,x_+))$ is glued with the decaying solutions of \eqref{4thOrderEqPhi} in the outer regions by the boundary conditions at $x_{\pm}$, which yields a solution of \eqref{4thOrderEqPhi} in $H^4(\R)$ associated with $\lambda_n$.  Hence, we obtain in our first result, Theorem \ref{MainThm1}, the existence of an infinite sequence of characteristic values $(\lambda_n)_{n\geq 1}$, that decreases towards 0 as $n\to \infty$. 

When $\rho_0'>0$ everywhere, unlike the first case, we lack the analytical expression of boundary conditions. We thus do not have the decrease of $\gamma_n$ or any control of $\gamma_n(\lambda)$ when $\lambda$ goes to 0.  Consequently, for $\rho_0'>0$ everywhere, our arguments only lead to a multiple existence of positive characteristic values $\lambda$ such that $\lambda\geq \epsilon_\star>0$. This is our second result, Theorem \ref{MainThm2}.

We then continue our operator method in other contexts. The first one \cite{Tai22_0} is to investigate the nonlinear Rayleigh-Taylor  instability of the gravity-driven incompressible Navier-Stokes equations with Navier-slip boundary conditions in a 2D slab domain $\Omega= 2\pi L \mathbb{T} \times (-1,1)$ with $L>0$ and $\mathbb{T}$ is the usual 1D-torus.  Based on infinitely unstable  modes of the linearized problem, we consider  a wide class of initial data to the nonlinear perturbation problem, extending Grenier's framework \cite{Gre00}, to prove  nonlinear Rayleigh-Taylor instability. 

Furthermore, as noted in \cite[Chapter X, Section 94(e)]{Cha61}, we study  the gravity waves problem that occur in an infinitely deep ocean and the occupied domain is bounded above by a moving interface $\{(t,x) \in \R_+\times 2\pi L\mathbb{T} \times \R | x_2=\eta(t,x_1)\}$.   The governing equation is  the gravity-driven incompressible Navier–Stokes equation without any  effect of surface tension  on the free surface.  The precise study will be derived in a forthcoming work \cite{Tai22_1}.

We organize this paper as follows. In Section \ref{SectResults}, we describe the physical model and then give the statement of our main theorems.  Section \ref{SectSmoothProf} and Section \ref{SectGeneralProf} are devoted to proving Theorem \ref{MainThm1} as $\rho_0$ satisfies \eqref{AssumeRho2}-\eqref{1RhoConst} and proving Theorem \ref{MainThm2} as $\rho_0$ satisfies \eqref{IntegralRho-0}-\eqref{2RhoDeri}, respectively.

\noindent{\textbf{Notation}}. Throughout this paper,  $C^{\star}$ is a generic constant depending on $\rho_0$ and other physical parameters,  independent of $\lambda$.

\section{The main results}\label{SectResults}

\subsection{Derivation of the physical model}

Recall that $(\rho, \vec u,P)=(\rho_0, \vec 0,P_0)$ is an equilibrium state of \eqref{EqNS}. The quantities 
\[\sigma =\rho-\rho_0,\quad \vec u = \vec u- \vec 0, \quad p = P-P_0\]
 satisfy the following nonlinear perturbation equations
\begin{equation}\label{EqPertur}
\begin{cases}
\partial_t \sigma + \vec u \cdot \nabla(\rho_0+\sigma)  =0,\\
(\rho_0+\sigma) \partial_t \vec u + (\rho_0+\sigma) \vec u \cdot \nabla \vec u +\nabla p =\mu \Delta \vec u - \sigma \vec g,\\
\text{div} \vec u=0.
\end{cases}
\end{equation}
That implies the following linearized system
\begin{equation}\label{EqLinearized}
\begin{cases}
\partial_t \sigma +  \rho_0' u_3=0,\\
\rho_0 \partial_t \vec u + \nabla p = \mu \Delta \vec u - \sigma \vec g,\\
\text{div}\vec u=0.
\end{cases}
\end{equation}

Since $\rho_0$ depends only on $x_3$, we continue the analysis into normal modes as in \cite[Chapter X, Section 91]{Cha61}. Precisely, we seek the perturbations of the form 
\begin{equation}\label{NormalModes}
(\sigma, \vec u, p)(t,x) = e^{\lambda t+ik_1x_1+ik_2x_2} (\zeta, -i\psi, -i\theta,\phi,q )(x_3), \end{equation}
where $k\in \R^2$, $\lambda \in \C \setminus \{0\}$ and $\text{Re} \lambda \geqslant 0$. Let $k=\sqrt{k_1^2+k_2^2}$,  we arrive at the following system 
\begin{equation}\label{SystemModes}
\begin{cases}
\lambda\zeta + \rho_0'\phi=0,\\
\lambda\rho_0 \psi- k_1 q+ \mu(k^2 \psi-\psi'')=0, \\
\lambda\rho_0\theta-k_2q +\mu(k^2 \theta-\theta'')=0,\\
\lambda\rho_0\phi + q' +\mu(k^2\phi-\phi'')+ g\zeta=0,\\
k_1\psi+k_2\theta+\phi'=0.
\end{cases}
\end{equation}
We directly see $\zeta = -\frac{\rho_0'\phi}{\lambda}$. Hence, $\eqref{SystemModes}_4$ becomes
\begin{equation}\label{SystemModes_4}
\lambda^2\rho_0 \phi+ \lambda q'+ \lambda\mu(k^2\phi-\phi')= g\rho_0'\phi. 
\end{equation}
We multiply $\eqref{SystemModes}_2$ by $k_1$ and $\eqref{SystemModes}_3$ by $k_2$, then use $\eqref{SystemModes}_4$ to obtain the equality  
\[
\lambda^2\rho_0\phi'+k^2\lambda q+\lambda \mu (k^2\phi'-\phi''')=0.
\]
Deriving this equation, and replacing $\lambda q'$ thanks to \eqref{SystemModes_4}, we get the fourth-order ordinary equation \eqref{4thOrderEqPhi}.
The investigation of multiple growing normal modes \eqref{NormalModes} amounts to finding regular solutions $\phi \in H^4(\R)$ of \eqref{4thOrderEqPhi}. These solutions decay to zero at $\pm \infty$, i.e.  $\phi$ satisfies \eqref{BoundaryCondition}.

\subsection{Main results}
Assuming the density profile $\rho_0$ is increasing, we first show that all characteristic values $\lambda$ are real and  obtain the following upper bound $\sqrt{\frac{g}{L_0}}$ of $\lambda$. The proof is postponed to Appendix \ref{Preliminaries}.
 \begin{lemma}\label{LemEigenvalueReal}
All characteristic values $\lambda$ for Eq. \eqref{4thOrderEqPhi}-\eqref{BoundaryCondition} in $H^4(\R)$ are always real and satisfy that $\lambda \leqslant \sqrt{\frac{g}{L_0}}$.
\end{lemma}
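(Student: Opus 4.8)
The plan is to test \eqref{4thOrderEqPhi} against $\bar\phi$, integrate over $\R$, and reduce the problem to a scalar quadratic relation in $\lambda$ whose coefficients are real and nonnegative. First I would multiply \eqref{4thOrderEqPhi} by $\bar\phi$ and integrate over $\R$. Since $\phi\in H^4(\R)$ embeds into $C^3(\R)$ with $\phi,\phi',\phi'',\phi'''$ all tending to $0$ at $\pm\infty$, and $\rho_0$ is bounded (it converges to the finite limits $\rho_\pm$), all boundary terms created by integration by parts vanish. One integration by parts on the left-hand side gives $-\int_\R(\rho_0\phi')'\bar\phi\,dx_3=\int_\R\rho_0|\phi'|^2\,dx_3$, and two integrations by parts in the viscous term give $\int_\R(\phi^{(4)}-2k^2\phi''+k^4\phi)\bar\phi\,dx_3=\int_\R(|\phi''|^2+2k^2|\phi'|^2+k^4|\phi|^2)\,dx_3=\|\phi''-k^2\phi\|_{L^2(\R)}^2$.

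Setting $A=\int_\R\rho_0(k^2|\phi|^2+|\phi'|^2)\,dx_3$, $B=\mu\|\phi''-k^2\phi\|_{L^2(\R)}^2$ and $D=gk^2\int_\R\rho_0'|\phi|^2\,dx_3$, the resulting identity is $\lambda^2A+\lambda B-D=0$. Here $A,B,D$ are real; moreover $A>0$ for any nontrivial $\phi$ since $\rho_0>0$ and $k>0$, while $B\geq0$, and $D\geq0$ because $\rho_0'\geq0$. To prove reality, I would write $\lambda=a+ib$ and separate real and imaginary parts; the imaginary part reads $b(2aA+B)=0$. For a characteristic value one has $a=\mathrm{Re}\,\lambda>0$, hence $2aA+B>0$, which forces $b=0$ and $\lambda\in\R$.

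Once $\lambda>0$ is known to be real, the bound follows algebraically. From $\lambda^2A=D-\lambda B$ and $\lambda B\geq0$ I get $\lambda^2\leq D/A$. Discarding the nonnegative term $\int_\R\rho_0|\phi'|^2\,dx_3$ in the denominator $A$ and using the pointwise inequality $\rho_0'\leq L_0^{-1}\rho_0$, which follows directly from $L_0^{-1}=\sup_\R(\rho_0'/\rho_0)$, I obtain $\lambda^2\leq g\,(\int_\R\rho_0'|\phi|^2\,dx_3)/(\int_\R\rho_0|\phi|^2\,dx_3)\leq gL_0^{-1}$, that is $\lambda\leq\sqrt{g/L_0}$.

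The only delicate point is the vanishing of the boundary terms in the two integrations by parts; this rests entirely on the $H^4(\R)$ regularity and decay of $\phi$ together with the boundedness of $\rho_0$, and once it is secured the remainder of the argument is purely algebraic. No genuine obstacle arises beyond verifying that $L_0^{-1}=\sup_\R(\rho_0'/\rho_0)$ is finite, which is guaranteed by the standing assumptions on the profile.
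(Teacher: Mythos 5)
Your proposal is correct and follows essentially the same route as the paper's Appendix A: multiply \eqref{4thOrderEqPhi} by $\bar\phi$, integrate by parts over $\R$ (all boundary terms vanishing by the $H^4$ decay), obtain the real quadratic relation $\lambda^2A+\lambda B-D=0$, separate real and imaginary parts to force $\lambda\in\R$, and then drop the nonnegative viscous term and use $\rho_0'\leq L_0^{-1}\rho_0$ to get $\lambda\leq\sqrt{g/L_0}$. The only cosmetic difference is in the reality step: you invoke $\mathrm{Re}\,\lambda>0$ to make the factor $2aA+B$ positive, whereas the paper substitutes the imaginary-part relation back into the real part to reach $(\lambda_1^2+\lambda_2^2)A=-D\leq0$, which needs no sign assumption on $\mathrm{Re}\,\lambda$; both are valid here.
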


In view of Lemma \ref{LemEigenvalueReal}, we seek for functions $\phi$ being real and we only consider vector space of real functions in what follows.

In the case $\rho_0'\geq 0$ compactly supported, our assumption is  
\begin{equation}\label{AssumeRho2} 
\begin{split}
&\rho_0' \text{ is a nonnegative function  of class } C_0^0(\R), \quad \text{supp}(\rho_0') =[-a,a],  \\
\end{split}
\end{equation}
Outside $(-a,a)$, we denote
\begin{equation}\label{1RhoConst}
\quad \rho_0(x_3) = 
\begin{cases}
\rho_- &\quad\text{as } x_3\in(-\infty,-a], \\
\rho_+ &\quad\text{as } x_3 \in [a,+\infty),
\end{cases}
\end{equation}
with $\rho_- <\rho_+$ are two positive constants. This can be seen, physically speaking,  as the situation of the toy model with a layer, of size $2a$, in which there is a mixture of the two fluids of density $\rho_-$ and $\rho_+$. We have the following result in Section \ref{SectSmoothProf}.
\begin{theorem}\label{MainThm1}
Let $\rho_0$ satisfy \eqref{AssumeRho2} and \eqref{1RhoConst}. 
There exist an infinite sequence  $(\lambda_n, \phi_n)_{n\geqslant 1}$ with $\lambda_n \in (0,\sqrt{\frac{g}{L_0}})$ and  $\phi_n \in H^4(\R)$ satisfying \eqref{4thOrderEqPhi}. In addition, $\lambda_n$ decreases towards 0 as $n$ goes to $\infty$.
\end{theorem}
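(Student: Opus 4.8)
The plan is to identify the characteristic values $\lambda_n$ as the fixed points of the $\lambda$-dependent eigenvalues of a compact self-adjoint operator, following the reduction sketched in the introduction. First I would pass from the whole line to the compact interval $(-a,a)$. Writing \eqref{4thOrderEqPhi} as a first-order linear system for $(\phi,\phi',\phi'',\phi''')^T$, I note that outside $[-a,a]$ the coefficients are constant, since $\rho_0\equiv\rho_\pm$ and $\rho_0'\equiv 0$ there; the characteristic exponents of the constant-coefficient system are $\pm k$ and $\pm\sqrt{k^2+\lambda\rho_\pm/\mu}$, and selecting those that decay gives the two-dimensional stable space $S_+$ at $+\infty$ and the two-dimensional unstable space $S_-$ at $-\infty$. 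Matching a solution on $(-a,a)$ to these outer solutions at $x_3=\pm a$ yields exactly the boundary conditions \eqref{LeftBoundary}-\eqref{RightBoundary}, so that a bounded $\phi\in H^4(\R)$ solving \eqref{4thOrderEqPhi} corresponds precisely to an $H^4((-a,a))$ solution satisfying these conditions. This correspondence is then encoded, after integration by parts producing the boundary terms, in the weak formulation $\lambda\bB_{a,\lambda}(\phi,\omega)=gk^2\int_{-a}^{a}\rho_0'\phi\omega\,dx_3$ for all $\omega\in H^2((-a,a))$, with $\bB_{a,\lambda}$ the continuous coercive bilinear form of \eqref{1stBilinearForm}.

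Next I would invoke coercivity and the Riesz representation theorem, as in Proposition \ref{PropInverseOfR}, to obtain the isomorphism $Y_{a,\lambda}\colon H^2((-a,a))\to(H^2((-a,a)))'$ and recast the problem as $\lambda Y_{a,\lambda}\phi=gk^2\rho_0'\phi$. Introducing $\cM$, multiplication by $\sqrt{\rho_0'}$ (which is supported only on $[-a,a]$), and substituting as indicated in the introduction turns this into the eigenvalue problem $\cM Y_{a,\lambda}^{-1}\cM v=\tfrac{\lambda}{gk^2}v$. The operator $T(\lambda):=\cM Y_{a,\lambda}^{-1}\cM$ is self-adjoint and positive on $L^2((-a,a))$, because $Y_{a,\lambda}^{-1}$ is the Riesz map of the inner product $\bB_{a,\lambda}$, and it is compact because $Y_{a,\lambda}^{-1}$ lands in $H^2((-a,a))$, which embeds compactly into $L^2((-a,a))$ by Rellich, followed by multiplication by the bounded factor $\sqrt{\rho_0'}$. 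Thus $T(\lambda)$ has a discrete sequence of positive eigenvalues $\gamma_1(\lambda)\geq\gamma_2(\lambda)\geq\cdots\to 0$, and a characteristic value of \eqref{4thOrderEqPhi} is exactly a $\lambda$ satisfying \eqref{EqFindLambda}, i.e. $\gamma_n(\lambda)=\lambda/(gk^2)$ for some $n$.

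The heart of the argument, and the step I expect to be the main obstacle, is the analysis of the scalar equation \eqref{EqFindLambda} for each fixed $n$. I would first record, via Lemma \ref{LemGammaCont}, that $\lambda\mapsto\gamma_n(\lambda)$ is continuous and in fact differentiable on the admissible interval $(0,\sqrt{g/L_0})$; this is precisely where Kato's analytic perturbation theory is indispensable, since $T(\lambda)$ depends on $\lambda$ through both the bulk term and the outer matching feeding into $\bB_{a,\lambda}$, and one must track the eigenvalue branches $\gamma_n(\lambda)$ through possible crossings. I would then use Lemma \ref{LemGammaDecrease}, that $\frac{d}{d\lambda}\bigl(1/\gamma_n(\lambda)\bigr)>0$, so that each $\gamma_n$ is strictly decreasing in $\lambda$. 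Since the right-hand side $\lambda/(gk^2)$ is strictly increasing and vanishes at $\lambda=0$ while $\gamma_n$ stays positive and decreasing, the difference $\gamma_n(\lambda)-\lambda/(gk^2)$ is strictly monotone, so \eqref{EqFindLambda} has at most one solution; existence of a unique root $\lambda_n\in(0,\sqrt{g/L_0})$ then follows from the intermediate value theorem together with the sign change at the endpoints of the admissible interval.

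Finally I would establish the monotonicity and limit of the sequence. Because $\gamma_{n+1}(\lambda)\leq\gamma_n(\lambda)$ for every $\lambda$ and both curves are decreasing while the line $\lambda/(gk^2)$ is fixed, the intersection point moves left as $n$ increases, giving $\lambda_{n+1}\leq\lambda_n$. The decreasing bounded sequence $\lambda_n$ converges to some $\lambda_\star\geq 0$; if $\lambda_\star>0$ then $\gamma_n(\lambda_n)=\lambda_n/(gk^2)\to\lambda_\star/(gk^2)>0$, yet $\gamma_n(\lambda_n)\leq\gamma_n(\lambda_\star)\to 0$ since the spectrum of the compact operator accumulates only at $0$, a contradiction; hence $\lambda_n\downarrow 0$. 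Reconstructing $\phi_n=Y_{a,\lambda_n}^{-1}\cM v_n$, bootstrapping from the weak equation to $\phi_n\in H^4((-a,a))$, and gluing with the decaying outer solutions determined by $S_\pm$ at $x_3=\pm a$ produces $\phi_n\in H^4(\R)$ solving \eqref{4thOrderEqPhi}, which completes the proof.
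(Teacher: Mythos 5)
Your proposal follows the paper's proof essentially step for step: the reduction to $(-a,a)$ with the matching conditions \eqref{LeftBoundary}--\eqref{RightBoundary}, the coercive form $\bB_{a,\lambda}$ and the compact self-adjoint operator $\cM Y_{a,\lambda}^{-1}\cM$, the differentiability and strict decrease of $\gamma_n(\lambda)$ via Kato-type perturbation theory, the intermediate-value/monotonicity argument for \eqref{EqFindLambda}, the same contradiction argument for $\lambda_n\downarrow 0$, and the final gluing with the outer exponentials. The only detail you assert without justification is the sign of $\gamma_n(\lambda)-\lambda/(gk^2)$ at the right endpoint $\lambda=\sqrt{g/L_0}$; the paper gets it from \eqref{EqPhi_nB} together with the coercivity of $\bB_{a,\lambda}$ and the definition of $L_0$, which yield $\frac{1}{\gamma_n(\lambda)}\geqslant L_0\lambda k^2$ and hence $\frac{\lambda}{\gamma_n(\lambda)}>gk^2$ there.
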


We divide the proof of Theorem \ref{MainThm1} into various steps. Notice that the rapid convergence is not necessary in this case because $\rho_0$ is constant on $(-\infty, -a)$ and on $(a, +\infty)$. Hence,  in this case $\phi$ can be found explicitly since \eqref{4thOrderEqPhi} outside $(-a,a)$ is an ODE with constant coefficients, that will shown in Proposition \ref{PropOdeOuterRegion}. After that, we deduce two boundary conditions \eqref{LeftBoundary} and \eqref{RightBoundary} at $\mp a$ respectively in Lemma \ref{LemBoundSmooth}. Eq. \eqref{4thOrderEqPhi} becomes a boundary-value problem on the finite interval $(-a,a)$.
Our method to solve that is based on the operator approach, namely the theory of compact and self-adjoint  operator for $Y_{a,\lambda}$, as illustrated in the introduction part.

In the next part, Section \ref{SectGeneralProf}, we consider $\rho_0'$  no longer compactly supported. The assumptions on $\rho_0$ are 
\begin{equation}\label{IntegralRho-0}
\rho_0 \in C^1(\R),  \quad \lim_{x_3 \to \pm \infty} \rho_0(x_3) = \rho_{\pm} \in (0,+\infty)
\end{equation}
and 
\begin{equation}\label{2RhoDeri}
0<\rho_0'(x_3) < \rho_m <+\infty \text{ for all } x_3 \in \R.
\end{equation}
The second theorem  is as follows.
\begin{theorem}\label{MainThm2}
Let $\rho_0$ satisfy  \eqref{IntegralRho-0} and \eqref{2RhoDeri}. 
For  $0<\epsilon_\star\ll 1$, there exists $N(\epsilon_\star)\in \N^\star$ such that there are at least $N(\epsilon_\star)$ values of $\lambda\in [\epsilon_\star, \sqrt{\frac{g}{L_0}}]$ such that $\phi\in H^4(\R)$  satisfying \eqref{4thOrderEqPhi}.
\end{theorem}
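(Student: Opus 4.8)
The plan is to follow the operator-theoretic scheme outlined in the introduction, now in the general setting where $\rho_0'>0$ everywhere but is not compactly supported. First I would construct, for each fixed $\lambda\in(0,\sqrt{g/L_0}]$, the stable space $S_+$ at $+\infty$ and unstable space $S_-$ at $-\infty$ of the first-order system equivalent to \eqref{4thOrderEqPhi}; since $\rho_0\to\rho_\pm$, the coefficients converge and the asymptotic characteristic roots are those of the constant-coefficient problem, yielding two-dimensional decaying subspaces at each end. From these I would determine cutoff points $x_\pm=x_\pm(k,\lambda)$ and the boundary conditions \eqref{GenLeftBound}-\eqref{GenRightBound} so that finding a bounded $H^4(\R)$ solution is equivalent to solving the fourth-order problem on the compact interval $(x_-,x_+)$ with those conditions. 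This is the analogue of Lemma \ref{LemBoundSmooth} but without explicit formulas, so the outer solutions must be controlled by a decay/ODE estimate rather than by closed form.

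Next I would introduce the coercive bilinear form $\bB_{x_-,x_+,\lambda}$ of \eqref{2ndBilinearForm} on $H^2((x_-,x_+))$, verify its continuity and coercivity (Proposition \ref{PropPropertyB_lambda}), and use Riesz representation to define $Y_{x_-,x_+,\lambda}$ together with its bounded inverse. Introducing the multiplication operator $\cM$ by $\sqrt{\rho_0'}$, I would then show that $\cM Y_{x_-,x_+,\lambda}^{-1}\cM$ is self-adjoint and compact on $L^2((x_-,x_+))$, so that its spectrum is a sequence $\gamma_n(\lambda)$ of eigenvalues. By Lemma \ref{LemGammaCont}, and the fact that $\lambda\mapsto\bB_{x_-,x_+,\lambda}$ depends continuously (indeed regularly) on $\lambda$ via Kato's perturbation theory, each $\gamma_n(\lambda)$ is continuous in $\lambda$ on every compact subinterval of $(0,\sqrt{g/L_0}]$. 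The characteristic values are exactly the solutions of \eqref{EqFindLambda}, namely $\gamma_n(\lambda)=\lambda/(gk^2)$.

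The counting argument is where the difference from Theorem \ref{MainThm1} appears. Because we no longer have monotonicity of $\gamma_n$ in $\lambda$ nor any control as $\lambda\to0$, I would instead fix a small $\epsilon_\star>0$ and work on the compact interval $[\epsilon_\star,\sqrt{g/L_0}]$. At the upper endpoint $\lambda=\sqrt{g/L_0}$ the operator $\cM Y_{x_-,x_+,\lambda}^{-1}\cM$ is a fixed compact self-adjoint operator with infinitely many positive eigenvalues $\gamma_n$ accumulating only at $0$; I would use the variational (min-max) characterization of these eigenvalues to compare them with the straight line $\lambda/(gk^2)$ and count the number $N(\epsilon_\star)$ of indices $n$ for which $\gamma_n(\sqrt{g/L_0})$ strictly exceeds $\sqrt{g/L_0}/(gk^2)$ while $\gamma_n(\epsilon_\star)$ falls below $\epsilon_\star/(gk^2)$, or more robustly for which the continuous function $\lambda\mapsto\gamma_n(\lambda)-\lambda/(gk^2)$ changes sign on $[\epsilon_\star,\sqrt{g/L_0}]$. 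An intermediate value theorem argument then produces a root $\lambda_n\in[\epsilon_\star,\sqrt{g/L_0}]$ for each such $n$, and letting $\epsilon_\star\to0$ makes $N(\epsilon_\star)$ arbitrarily large. For each root, $\phi_n=Y_{x_-,x_+,\lambda_n}^{-1}\cM v_n$ lies in $H^4((x_-,x_+))$ by the bootstrap argument, and is glued to the decaying outer solutions through \eqref{GenLeftBound}-\eqref{GenRightBound} to give an $H^4(\R)$ solution.

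The main obstacle I expect is precisely the absence of monotonicity and of a limit for $\gamma_n(\lambda)$ as $\lambda\to0$, which in the compactly supported case supplied both existence and uniqueness of each $\lambda_n$. Here the delicate point is to obtain a \emph{lower} bound on enough eigenvalues $\gamma_n$ at a suitable $\lambda$ (so that many lie above the line $\lambda/(gk^2)$) while guaranteeing that the line eventually overtakes them; quantifying how many such crossings occur on $[\epsilon_\star,\sqrt{g/L_0}]$ requires careful min-max estimates on $\cM Y_{x_-,x_+,\lambda}^{-1}\cM$, uniform enough in $\lambda$ to make the sign-change argument rigorous. A secondary difficulty is the $\lambda$-dependence of the interval endpoints $x_\pm(k,\lambda)$, which means the underlying Hilbert space varies with $\lambda$; I would handle this either by a fixed enclosing interval on which the outer decay is imposed as a boundary condition, or by an isomorphism reducing all the spaces to a reference interval so that the continuity-in-$\lambda$ and perturbation arguments apply cleanly.
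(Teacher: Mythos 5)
Your proposal follows essentially the same route as the paper: reduction to a compact interval via the asymptotic decaying subspaces, the coercive form $\bB_{x_-,x_+,\lambda}$ and the compact self-adjoint operator $\cM Y_{x_-,x_+,\lambda}^{-1}\cM$, continuity of $\gamma_n(\lambda)$, and an intermediate-value count of solutions of $\gamma_n(\lambda)=\lambda/(gk^2)$ on $[\epsilon_\star,\sqrt{g/L_0}]$ (the paper's Proposition \ref{PropFiniteLambda} uses $b_n(\epsilon_\star)=\inf_{\lambda\geq\epsilon_\star}\gamma_n(\lambda)$ exactly as your sign-change criterion requires, and Section \ref{RefinedEstimates} realizes your ``fixed enclosing interval'' fix via uniform-in-$\lambda$ Volterra estimates). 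Only note that your endpoint comparison is stated backwards: coercivity forces $\gamma_n(\sqrt{g/L_0})<\sqrt{g/L_0}/(gk^2)$, so the crossing occurs because $\gamma_n$ lies \emph{above} the line at $\epsilon_\star$ and \emph{below} it at $\sqrt{g/L_0}$, not the reverse.
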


The proof of Theorem \ref{MainThm2} remains the same to that one of the first case, but more complicated. We point out  the main differences as follows.

Questions concerning the existence of solutions of \eqref{4thOrderEqPhi} being bounded at $\infty$ are not trivial as in the first case. In Section \ref{SectGeneralProf}, we transform Eq. \eqref{4thOrderEqPhi} into a system of ODEs \eqref{EqDiffU}. The matrix $L(x_3,\lambda)$ has 4 eigenvalues $\pm k$ and $\pm \sqrt{k^2+\lambda\rho_0(x_3)/\mu}$, that are different for all $\lambda>0$. We then follow \cite[Theorem 8.1, Chapter 3]{CL81}, whose statement given in Appendix \ref{ThmGeneralSystem}, to  deduce that \eqref{4thOrderEqPhi}  admits two linearly independent solutions decaying to 0 at $+\infty$ (respectively $-\infty$). A suitable  interval $(x_-, x_+)$ is thus determined through a  precise calculation of the family of solutions decaying exponentially to zero at $\pm \infty$, which yield appropriate boundary conditions \eqref{GenLeftBound} at $x_-$ and \eqref{GenRightBound} at $x_+$ in Proposition \ref{PropEquivalentEq}.

We then solve \eqref{4thOrderEqPhi} on a finite interval $(x_-,x_+)$ with boundary conditions  \eqref{GenLeftBound}--\eqref{GenRightBound}. To do that, in Section \ref{SubSolInterval},  we  construct the bilinear form $\bB_{x_-,x_+,\lambda}$  in Proposition \ref{PropInverseGeneralT} and continue the same arguments as in Section \ref{SectSmoothProf} to obtain solution in the inner region $(x_-,x_+)$.  Note that the coercivity of $\bB_{x_-,x_+,\lambda}$ relies on the positivity of the terms $BV_{x_+,\lambda}$ and $BV_{x_-,\lambda}$ stated in Lemma \ref{CoercivityLemma}. Due to the lack of analytical expression of boundary conditions in this case, it turns out that  the positivity of $BV_{x_+,\lambda}$ and $BV_{x_-,\lambda}$ will be derived, in Proposition \ref{PropInverseGeneralT}, by  deducing the behavior at $\pm \infty$  of coefficients $n_{ij}^{\pm}$ ($i,j=1,2$) depending on $(x_\pm,\lambda)$ and appearing in the boundary conditions \eqref{GenLeftBound}-\eqref{GenRightBound}.  Having the bilinear form $\bB_{x_-,x_+,\lambda}$, we continue our arguments in Propositions \ref{PropPropertyB_lambda} and \ref{PropFiniteLambda}, that follows the same line  of Section \ref{SectSmoothProf}, to prove Theorem \ref{MainThm2}.

Note that, Lemma \ref{CoercivityLemma} does not give any control on $x_-$ and $x_+$. It is interesting for computational purposes as well as for a study of particular profiles (for example profiles decaying exponentially to their limit at $\pm \infty$), to be able to derive an explicit interval on which this is true. Notice that the restriction $\lambda\geq \epsilon_\star>0$ of Theorem \ref{MainThm2} implies that the matrix $R(\lambda)$ (see \eqref{MatrixRemainder}) in Eq. \eqref{EqDiffU} becomes regular. Hence, for profile $\rho_0'>0$ everywhere, we devote Section \ref{RefinedEstimates} to establish a control of $x_-$ and $x_+$ independent of $\lambda$. In Propositions \ref{PropSolUplusInfty} and \ref{PropSolUminusInfty}, through a careful construction of Volterra series, we can obtain refined estimates of bounded solutions of \eqref{EqDiffU} at $\infty$ uniformly in $\lambda\in [\epsilon_\star, \sqrt{\frac{g}{L_0}}]$. They allow us to have refined estimates on the coefficients $n_{ij}^{\pm}$ $(i,j=1,2)$ uniformly in $\lambda\in [\epsilon_\star, \sqrt{\frac{g}{L_0}}]$. Hence, we obtain a criterion for $x_-$ and $x_+$  in Proposition \ref{coercive-interval} to fulfill the conditions of Lemma \ref{CoercivityLemma} and extend Proposition \ref{PropInverseGeneralT}.

\section{The compactly supported profile}\label{SectSmoothProf}

In this section, we consider $\rho_0$ satisfying \eqref{AssumeRho2} and \eqref{1RhoConst}.   We remark that in this section, we use the notations $\nu_{\pm}= \frac{\rho_{\pm}}{\mu}$ and  $\tau_{\pm}(\lambda)=(k^2+\lambda\nu_{\pm})^{1/2}$ and  throughout this paper, we will use $x$ instead of $x_3$ for notational conveniences.

\subsection{The solution in outer regions and reduction to a problem on a finite interval}\label{SolOuterRegion}

We derive, in this subsection, the precise expression of $\phi(x)$ as  $|x|\geqslant a$.  
\begin{proposition}\label{PropOdeOuterRegion} 
There are two linearly independent solutions of \eqref{4thOrderEqPhi}  decaying to 0 at $+\infty$ as $x\in [a,+\infty)$, i.e.
\begin{equation}\label{BaseRightSol}
\phi_1^+(x)= e^{-kx}\quad\text{and}\quad\phi_2^+(x)=e^{-\tau_+(\lambda) x}.
\end{equation}
and two linearly independent solutions of \eqref{4thOrderEqPhi} decaying to 0 at $-\infty$ as $x \in (-\infty,-a]$, i.e. 
\begin{equation}\label{BaseLeftSol}
\phi_1^-(x)= e^{kx}\quad\text{and}\quad\phi_2^-(x)=e^{\tau_-(\lambda) x}.
\end{equation}
All solutions decaying to 0 at $+ \infty$ (respectively at $-\infty$) are spanned by $(\phi_1^+,\phi_2^+)$ (respectively by $(\phi_1^-,\phi_2^-)$).
\end{proposition}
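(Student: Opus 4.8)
The plan is to analyze equation \eqref{4thOrderEqPhi} in the outer region $x\in[a,+\infty)$, where $\rho_0\equiv\rho_+$ is constant and hence $\rho_0'\equiv 0$. In this region the equation reduces to a linear fourth-order ODE with \emph{constant} coefficients, namely $\mu\lambda(\phi^{(4)}-2k^2\phi''+k^4\phi)+\lambda^2\rho_+(k^2\phi-\phi'')=0$. Since $\lambda>0$ by Lemma \ref{LemEigenvalueReal} (and $\lambda\neq 0$), I can divide through by $\mu\lambda$ and write the characteristic equation by substituting $\phi=e^{rx}$. The key computation is to factor the resulting quartic in $r$.

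First I would form the characteristic polynomial. Writing $\phi=e^{rx}$ turns $\phi''-k^2\phi$ into $(r^2-k^2)e^{rx}$ and $\phi^{(4)}-2k^2\phi''+k^4\phi$ into $(r^2-k^2)^2e^{rx}$, so the equation becomes $\mu\lambda(r^2-k^2)^2+\lambda^2\rho_+(r^2-k^2)=0$, i.e. $\lambda(r^2-k^2)\bigl[\mu(r^2-k^2)+\lambda\rho_+\bigr]=0$. The roots are therefore $r^2=k^2$ and $r^2=k^2+\lambda\rho_+/\mu=\tau_+(\lambda)^2$, using the notation $\tau_+(\lambda)=(k^2+\lambda\nu_+)^{1/2}$ with $\nu_+=\rho_+/\mu$. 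This yields the four roots $r=\pm k$ and $r=\pm\tau_+(\lambda)$. Since $\lambda>0$ these four roots are real, distinct, and nonzero, so the general solution is a linear combination of $e^{\pm kx}$ and $e^{\pm\tau_+(\lambda)x}$.

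Next I would impose the decay requirement \eqref{BoundaryCondition}. As $x\to+\infty$, the exponentials $e^{kx}$ and $e^{\tau_+(\lambda)x}$ blow up (both $k>0$ and $\tau_+(\lambda)>0$), while $e^{-kx}$ and $e^{-\tau_+(\lambda)x}$ decay to zero. Hence every solution tending to $0$ at $+\infty$ must have zero coefficients on the growing modes, leaving exactly the two-dimensional space spanned by $\phi_1^+=e^{-kx}$ and $\phi_2^+=e^{-\tau_+(\lambda)x}$, which are linearly independent because $k\neq\tau_+(\lambda)$. The argument at $-\infty$ on $(-\infty,-a]$ is symmetric: there $\rho_0\equiv\rho_-$ gives roots $\pm k,\pm\tau_-(\lambda)$, and now the \emph{decaying} modes as $x\to-\infty$ are $e^{kx}$ and $e^{\tau_-(\lambda)x}$, yielding $\phi_1^-$ and $\phi_2^-$ as in \eqref{BaseLeftSol}.

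I do not expect a genuine obstacle here, since the reduction to constant coefficients makes the problem elementary; the only points requiring care are verifying that the four characteristic roots are distinct (which follows from $\lambda\nu_\pm>0$, so $\tau_\pm(\lambda)>k$) and confirming that the signs of the roots correctly partition the solution space into growing and decaying parts at each infinity. The mild subtlety is simply bookkeeping: ensuring that ``decaying at $+\infty$'' selects the negative-exponent modes while ``decaying at $-\infty$'' selects the positive-exponent ones, so that the two stable subspaces are identified correctly and the spanning claim is justified by a dimension count (four-dimensional solution space, two-dimensional decaying subspace at each end).
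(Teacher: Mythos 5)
Your proposal is correct and follows essentially the same route as the paper: reduce \eqref{4thOrderEqPhi} on each outer region to a constant-coefficient fourth-order ODE, factor the characteristic quartic as $\lambda(r^2-k^2)\bigl[\mu(r^2-k^2)-\lambda\rho_\pm\bigr]=0$ to obtain the four distinct real roots $\pm k$, $\pm\tau_\pm(\lambda)$, and retain only the two decaying exponentials at each infinity. The only blemish is a sign slip in your intermediate characteristic equation --- the term $\lambda^2\rho_+(k^2\phi-\phi'')$ contributes $-\lambda^2\rho_+(r^2-k^2)e^{rx}$, not $+\lambda^2\rho_+(r^2-k^2)e^{rx}$, so your displayed factorization would literally give $r^2=k^2-\lambda\rho_+/\mu$ --- but the roots you actually state, $r=\pm k$ and $r=\pm\tau_+(\lambda)$ with $\tau_+(\lambda)=(k^2+\lambda\nu_+)^{1/2}$, are the correct ones and the remainder of the argument (distinctness of the roots, selection of the stable two-dimensional subspace at each end) is unaffected and matches the paper's proof.
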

\begin{proof}
For  $x \in[a,+\infty)$,  \eqref{4thOrderEqPhi} reduces to 
\begin{equation}
-\lambda \nu_{+} ( k^2 \phi - \phi'') = \phi^{(4)} - 2k^2 \phi'' +k^4 \phi.
\end{equation}
We seek $\phi$ as $\phi(x) = e^{rx}$. Hence, 
\[
-\lambda \nu_+ (k^2-r^2) = r^4-2k^2r^2+k^4,
\]
which yields $r= \pm k$ or $r=\pm (k^2+\lambda \nu_+)^{1/2}$. Since $\phi$ tends to 0 at $+\infty$,  we get two linearly independent solutions
\[
\phi_1^+(x)= e^{-kx}\quad\text{and}\quad\phi_2^+(x)=e^{-\tau_+(\lambda)x}.
\]
Hence, all solutions $\phi$ decaying to 0 at $+\infty$ are of the form 
\begin{equation}\label{LeftSol}
\phi(x) = A_1^+ e^{-k(x-a)}+ A_2^+ e^{-\tau_+(\lambda)(x-a)}
\end{equation}
for all $x\in [a,+\infty)$ and for some real constants $A_1^+$ and $A_2^+$. 

If $x \in (-\infty,-a]$, the same calculation implies \eqref{BaseRightSol}. Then, all solutions $\phi$ decaying to 0 at $-\infty$ are of the form 
\begin{equation}\label{RightSol}
\phi(x) = A_1^- e^{k(x+a)}+A_2^- e^{\tau_-(\lambda)(x+a)}
\end{equation}
for all $x\in (-\infty,-a]$ and for some real constants $A_1^-$ and $A_2^-$.
\end{proof}

Once it is proven that $\phi(x)$ outside $(-a,a)$ is of the form \eqref{LeftSol} and \eqref{RightSol}, we search for $\phi$ on $(-a,a)$. That solution has to match with \eqref{LeftSol} and \eqref{RightSol} well, i.e. there is a condition  on $(\phi,\phi',\phi'',\phi''')$ at $x=\pm a$. We will show the conditions in the following lemma. 

\begin{lemma}\label{LemBoundSmooth}
The boundary condition of \eqref{4thOrderEqPhi} at $x=-a$, for $\phi\in H^4(\R)$, is 
\begin{equation}\label{LeftBoundary}
\begin{cases}
k\tau_{-} \phi(-a) -(k+\tau_-(\lambda)) \phi'(-a) +\phi''(-a)=0,\\
k\tau_-(\lambda)(k+\tau_-(\lambda)) \phi(-a) - (k^2+k\tau_-(\lambda)+\tau_-^2(\lambda))\phi'(-a)+\phi'''(-a)=0.
\end{cases}
\end{equation}
and at $x=a$ is 
\begin{equation}\label{RightBoundary}
\begin{cases}
k\tau_+(\lambda) \phi(a) +(k+\tau_+(\lambda)) \phi'(a) +\phi''(a)=0,\\
-k\tau_+(\lambda)(k+\tau_+(\lambda)) \phi(a) - (k^2+k\tau_+(\lambda) +\tau_+^2(\lambda)) \phi'(a)+\phi'''(a)=0.
\end{cases}
\end{equation}
\end{lemma}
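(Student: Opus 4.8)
The plan is to derive the boundary conditions at $x=\pm a$ by matching the interior solution $\phi$ with the known exterior solutions from Proposition \ref{PropOdeOuterRegion}. Since we require $\phi\in H^4(\R)$, the function $\phi$ together with its first three derivatives must be continuous across the matching points $x=\pm a$ (continuity of $\phi,\phi',\phi'',\phi'''$ is exactly what $H^4$ regularity forces at a point, and the jump in the fourth derivative can be absorbed since $\rho_0'$ is merely continuous). The exterior solution on $[a,+\infty)$ is the two-parameter family \eqref{LeftSol}, governed by the two free constants $A_1^+,A_2^+$; the idea is to eliminate these two constants from the four quantities $(\phi(a),\phi'(a),\phi''(a),\phi'''(a))$, leaving two scalar constraints, which are precisely \eqref{RightBoundary}. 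The same procedure on $(-\infty,-a]$ using \eqref{RightSol} yields \eqref{LeftBoundary}.

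Concretely, for the right endpoint I would differentiate $\phi(x)=A_1^+e^{-k(x-a)}+A_2^+e^{-\tau_+(x-a)}$ (writing $\tau_+$ for $\tau_+(\lambda)$) and evaluate at $x=a$, obtaining the linear system
\begin{equation}
\begin{cases}
\phi(a)=A_1^++A_2^+,\\
\phi'(a)=-kA_1^+-\tau_+A_2^+,\\
\phi''(a)=k^2A_1^++\tau_+^2A_2^+,\\
\phi'''(a)=-k^3A_1^+-\tau_+^3A_2^+.
\end{cases}
\end{equation}
From the first two equations one solves for $A_1^+$ and $A_2^+$ in terms of $\phi(a),\phi'(a)$ (the $2\times2$ coefficient matrix has determinant $-\tau_++k=-(\tau_+-k)\neq0$ since $\tau_+>k$ for $\lambda>0$), and substitutes into the expressions for $\phi''(a)$ and $\phi'''(a)$. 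The two resulting identities are the claimed relations in \eqref{RightBoundary}; the signs there reflect the decaying exponentials $e^{-k x}, e^{-\tau_+ x}$ at $+\infty$. For the left endpoint the exterior solution uses growing exponentials $e^{k x}, e^{\tau_- x}$ (decaying as $x\to-\infty$), so the derivatives carry the opposite signs, producing \eqref{LeftBoundary}.

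The cleaner way to organize the elimination, which I would actually carry out, is to observe that each exterior basis function $e^{\pm r(x\mp a)}$ satisfies a first-order relation, so the family \eqref{LeftSol} is annihilated by the second-order operator $(\partial_x+k)(\partial_x+\tau_+)$ at $+\infty$ and by $(\partial_x-k)(\partial_x-\tau_-)$ at $-\infty$. Applying $(\partial_x+k)(\partial_x+\tau_+)$ to \eqref{LeftSol} gives zero identically, i.e. $\phi''+(k+\tau_+)\phi'+k\tau_+\phi\equiv0$ on $[a,+\infty)$; evaluating at $x=a$ gives the first line of \eqref{RightBoundary}. Differentiating that identity once more and evaluating at $a$ (or equivalently expanding the third-order consequence $(\partial_x+k)(\partial_x+\tau_+)\phi'=0$) yields the second line after reducing $\phi''$ via the first relation, which is how the coefficient $k^2+k\tau_++\tau_+^2$ arises. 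The analogous operators with $k,\tau_-$ replaced by $-k,-\tau_-$ give \eqref{LeftBoundary}.

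The main obstacle is purely the bookkeeping of signs and the verification that the second-order relation can be differentiated and reduced to give exactly the stated third-order coefficients; there is no genuine analytic difficulty, since the nondegeneracy $\tau_\pm(\lambda)>k$ (guaranteeing $\tau_\pm\neq k$ and hence invertibility of the elimination system) holds for every $\lambda>0$ by the definition $\tau_\pm(\lambda)=(k^2+\lambda\nu_\pm)^{1/2}$. One should also note explicitly that the matching requires only $\phi,\phi',\phi'',\phi'''$ continuous at $\pm a$, which is exactly the $H^4$ trace information available, so the boundary conditions are well posed; this is precisely the regularity remark made in the introduction.
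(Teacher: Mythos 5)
Your proposal is correct and follows essentially the same route as the paper: the boundary conditions are exactly the statement that $(\phi,\phi',\phi'',\phi''')(\pm a)$ lies in the two-dimensional space spanned by the decaying exterior solutions, and the two linear relations are obtained by eliminating the two free constants $A_{1}^{\pm},A_{2}^{\pm}$ (the paper packages this elimination as orthogonality to the explicitly computed orthogonal complement of $S_{\pm}$, while your annihilator-operator $(\partial_x\pm k)(\partial_x\pm\tau_{\pm})$ formulation is an equivalent and tidy way to produce the same two relations). The sign bookkeeping and the reduction yielding the coefficient $k^2+k\tau_{\pm}+\tau_{\pm}^2$ both check out, and your remark that $H^4(\R)\hookrightarrow C^3$ justifies the matching of $\phi,\dots,\phi'''$ at $\pm a$ is exactly the regularity point the paper relies on.
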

\begin{proof}
The boundary condition of the solutions $\phi$ of \eqref{4thOrderEqPhi} at $x= \pm a$ is equivalent to the fact that $\phi$ belongs to the space of decaying solutions at $\pm \infty$. On the one hand, it can be seen from \eqref{LeftSol} and \eqref{RightSol} that 
\[
\begin{pmatrix}
\phi(x) \\ \phi'(x) \\ \phi''(x) \\ \phi'''(x) 
\end{pmatrix} 
= A_1^- e^{k(x+a)}
\begin{pmatrix}
1 \\ k \\ k^2 \\k^3
\end{pmatrix}
+A_2^- e^{\tau_-(\lambda)(x+a)}
\begin{pmatrix}
1 \\ \tau_-(\lambda) \\ \tau_-^2(\lambda) \\ \tau_-^3(\lambda)
\end{pmatrix}
\quad\text{for }x\leqslant -a
\]
and that
\[
\begin{pmatrix}
\phi(x) \\ \phi'(x) \\ \phi''(x) \\ \phi'''(x) 
\end{pmatrix} 
=A_1^+ e^{-k(x-a)}
\begin{pmatrix}
1 \\ -k \\ k^2 \\ -k^3
\end{pmatrix}
+A_2^+ e^{-\tau_+(\lambda)(x-a)}
\begin{pmatrix}
1 \\ -\tau_+(\lambda) \\ \tau_+^2(\lambda) \\ -\tau_+^3(\lambda)
\end{pmatrix},
\quad\text{for }x\geqslant a.
\]
On the other hand,  the orthogonal complement of the subspace $S_-$ of $\R^4$ spanned by two vectors $(1,k, k^2,k^3)^T$ and $(1,\tau_-(\lambda),\tau_-^2(\lambda),\tau_-^3(\lambda))^T$ is spanned by 
\[
(k\tau_-(\lambda),-(k+\tau_-(\lambda)),1,0)^T \text{ and }(k\tau_-(\lambda)(k+\tau_-(\lambda)),-(k^2+k\tau_-(\lambda)+\tau_-(\lambda)^2),0,1)^T.
\]  
Similarly, the orthogonal complement of the subspace $S_+$ of $\R^4$ spanned by  two vectors $(1,-k, k^2,-k^3)^T$ and $(1,-\tau_+(\lambda),\tau_+^2(\lambda),-\tau_+^3(\lambda))$  is spanned by two vectors 
\[
(k\tau_+(\lambda),k+\tau_+(\lambda),1,0)^T \quad\text{and} \quad(-k\tau_+(\lambda)(k+\tau_+(\lambda)), -(k^2+k\tau_+(\lambda) +\tau_+^2(\lambda)),0,1)^T.
\]  
The above arguments allow us to set \eqref{LeftBoundary} and \eqref{RightBoundary} as boundary conditions of Eq. \eqref{4thOrderEqPhi} on $(-a,a)$.

Remark that $S_-$ is also spanned by $\{(1,k, k^2,k^3)^T, (0, 1, k+\tau_-(\lambda), k^2+k\tau_-(\lambda)+\tau_-^2(\lambda))^T\}$, and that $S_+$ is also spanned by $\{(1,-k, k^2,-k^3)^T, (0, 1, k-\tau_+(\lambda), k^2-k\tau_+(\lambda)+\tau_+^2(\lambda))^T\}$. This choice of generating families yields two uniformly independent families when $\lambda\rightarrow 0$.
\end{proof}

We aim at solving \eqref{4thOrderEqPhi} on $(-a,a)$ with the boundary conditions \eqref{LeftBoundary}-\eqref{RightBoundary}.

\subsection{A bilinear form and a self-adjoint invertible  operator}\label{SubSectSmoothOpe}

We introduce the bilinear form $\bB_{a,\lambda}$ in the following proposition. 

\begin{proposition}\label{PropPropertyR}
Let us denote by 
\begin{equation}\label{ValueThetaRhoAt-A}
\begin{split}
BV_{-a,\lambda}(\vartheta,\varrho) &:= 
\mu \left( \begin{split}
&k\tau_-(\lambda)(k+\tau_-(\lambda)) \vartheta(-a) \varrho(-a) - k\tau_-(\lambda)\vartheta'(-a) \varrho(-a) \\
&\qquad -  k\tau_-(\lambda) \vartheta(-a) \varrho'(-a) + (k+\tau_-(\lambda))\vartheta'(-a) \varrho'(-a)
\end{split} \right)
\end{split}
\end{equation}
and by
\begin{equation}\label{ValueThetaRhoAtA}
\begin{split}
BV_{a,\lambda}(\vartheta,\varrho)
&:=\mu \left( \begin{split}
&k\tau_+(\lambda)(k+\tau_+(\lambda)) \vartheta(a) \varrho(a) - k\tau_+(\lambda)\vartheta'(a) \varrho(a) \\
&\qquad -  k\tau_+(\lambda) \vartheta(a) \varrho'(a) + (k+\tau_+(\lambda))\vartheta'(a) \varrho'(a)
\end{split} \right).
\end{split}
\end{equation}
Then, 
\begin{equation}\label{1stBilinearForm}
\begin{split}
\bB_{a,\lambda}(\vartheta, \varrho) & := BV_{a,\lambda}(\vartheta,\varrho) + BV_{-a,\lambda}(\vartheta,\varrho) + \lambda \int_{-a}^a  \rho_0 (k^2\vartheta  \varrho + \vartheta'  \varrho') dx \\
&\qquad\qquad+ \mu \int_{-a}^a (\vartheta''  \varrho'' + 2k^2 \vartheta'  \varrho' +k^4 \vartheta  \varrho)dx.
\end{split} 
\end{equation}
is a continuous and coercive bilinear form on $H^2((-a,a))$.

Furthermore, let  $(H^2((-a,a)))'$ be the dual space of $H^2((-a,a))$ associated with the norm $\sqrt{\bB_{a,\lambda}(\cdot,\cdot)}$, there exists a unique operator  
\[Y_{a,\lambda} \in  \mathcal{L}(H^2((-a,a)), (H^2((-a,a)))'),\] that is also bijective,  such that
 \begin{equation}\label{EqMathcalB_a}
\bB_{a,\lambda}(\vartheta, \varrho) = \langle Y_{a,\lambda}\vartheta,  \varrho\rangle
\end{equation}
for all $\vartheta, \varrho \in H^2((-a,a))$.
\end{proposition}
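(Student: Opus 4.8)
The plan is to establish the three properties of $\bB_{a,\lambda}$ in turn—continuity, coercivity, and the operator representation—and to handle the boundary terms $BV_{\pm a,\lambda}$ with care, since they are the only non-obvious ingredient. First I would verify \emph{continuity}. The interior integrals are manifestly bounded by $C^\star\|\vartheta\|_{H^2}\|\varrho\|_{H^2}$ on $(-a,a)$, since $\rho_0$ is bounded there and the integrands involve at most second derivatives paired in $L^2$. For the boundary terms, one uses the trace inequality: on the bounded interval $(-a,a)$ the point evaluations $\vartheta\mapsto\vartheta(\pm a)$ and $\vartheta\mapsto\vartheta'(\pm a)$ are continuous functionals on $H^2((-a,a))$, so each product in \eqref{ValueThetaRhoAt-A}--\eqref{ValueThetaRhoAtA} is controlled by $\|\vartheta\|_{H^2}\|\varrho\|_{H^2}$. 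Since $\tau_\pm(\lambda)$ and $k$ are fixed positive numbers, continuity follows.

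The heart of the matter is \emph{coercivity}, and here the main obstacle is the sign of the boundary contributions $BV_{\pm a,\lambda}(\vartheta,\vartheta)$. The interior part already gives a good lower bound: for $\vartheta=\varrho$ one has
\[
\mu\int_{-a}^a\bigl((\vartheta'')^2+2k^2(\vartheta')^2+k^4\vartheta^2\bigr)\,dx=\mu\int_{-a}^a(\vartheta''-k^2\vartheta)^2\,dx+4k^2\mu\int_{-a}^a(\vartheta')^2\,dx\ \geq\ 0,
\]
and together with the term $\lambda\int_{-a}^a\rho_0(k^2\vartheta^2+(\vartheta')^2)\,dx\geq \lambda\rho_-\min(k^2,1)\|\vartheta\|_{H^1}^2$ this already controls the full $H^1$-norm and, via the $(\vartheta''-k^2\vartheta)^2$ term, the $H^2$-norm. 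The delicate point is that $BV_{\pm a,\lambda}(\vartheta,\vartheta)$ must not spoil this. I would show each boundary quadratic form is itself nonnegative: observing that
\[
BV_{a,\lambda}(\vartheta,\vartheta)=\mu\,k\tau_+(\lambda)\bigl(\sqrt{k}\,\vartheta(a)-\tfrac{1}{\sqrt{k}}\,\vartheta'(a)\bigr)^2 \;+\;\mu\bigl(k+\tau_+(\lambda)-\tau_+(\lambda)\bigr)(\vartheta'(a))^2
\]
is the kind of completion-of-square one expects; more precisely the associated $2\times2$ matrix $\mu\begin{pmatrix}k\tau_+(k+\tau_+) & -k\tau_+\\ -k\tau_+ & k+\tau_+\end{pmatrix}$ has positive trace and determinant $\mu^2 k\tau_+(k+\tau_+)^2>0$, hence is positive definite. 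The same computation applies at $-a$. Thus both boundary terms are $\geq 0$, and coercivity of $\bB_{a,\lambda}$ on $H^2((-a,a))$ follows from the interior estimate alone.

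Once coercivity and continuity are in hand, $\sqrt{\bB_{a,\lambda}(\cdot,\cdot)}$ is a norm on $H^2((-a,a))$ equivalent to the standard one, so $H^2((-a,a))$ equipped with the bilinear form $\bB_{a,\lambda}$ is a Hilbert space. The \emph{operator representation} is then a direct application of the Riesz representation theorem: for each fixed $\vartheta$, the map $\varrho\mapsto\bB_{a,\lambda}(\vartheta,\varrho)$ is a continuous linear functional, i.e.\ an element of $(H^2((-a,a)))'$, and defining $Y_{a,\lambda}\vartheta$ to be that functional gives a bounded linear operator into the dual satisfying \eqref{EqMathcalB_a}. Linearity and boundedness of $Y_{a,\lambda}$ follow from bilinearity and continuity of $\bB_{a,\lambda}$; uniqueness is immediate since $\varrho$ ranges over all of $H^2$. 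Bijectivity of $Y_{a,\lambda}$ is exactly the Lax--Milgram conclusion: coercivity guarantees that for any $F\in(H^2((-a,a)))'$ there is a unique $\vartheta$ with $\bB_{a,\lambda}(\vartheta,\varrho)=\langle F,\varrho\rangle$ for all $\varrho$, which says $Y_{a,\lambda}$ is onto and one-to-one. I expect the sign analysis of $BV_{\pm a,\lambda}$ to be the only step requiring genuine attention; everything else is routine functional analysis on a bounded interval.
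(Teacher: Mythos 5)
Your proposal is correct and follows essentially the same route as the paper: continuity via the trace/Sobolev control of $\vartheta(\pm a),\vartheta'(\pm a)$, coercivity by checking that the boundary quadratic forms $BV_{\pm a,\lambda}(\vartheta,\vartheta)$ are nonnegative so that the interior term $\mu\int(|\vartheta''|^2+2k^2|\vartheta'|^2+k^4|\vartheta|^2)\,dx\geq \mu\min(1,2k^2,k^4)\|\vartheta\|_{H^2}^2$ does all the work, and Riesz/Lax--Milgram for the operator $Y_{a,\lambda}$. Two of your displayed formulas are, however, false as identities, even though neither affects the conclusion: (i) on the bounded interval $(-a,a)$ one has $\int_{-a}^a\bigl((\vartheta'')^2+2k^2(\vartheta')^2+k^4\vartheta^2\bigr)dx=\int_{-a}^a(\vartheta''-k^2\vartheta)^2dx+2k^2\,\vartheta'\vartheta\big|_{-a}^{a}$, not what you wrote --- the integration by parts produces boundary terms; this detour is unnecessary since the interior term bounds the $H^2$-norm directly; (ii) your completion of the square for $BV_{a,\lambda}(\vartheta,\vartheta)$ drops the term $\mu k\tau_+^2(\lambda)\vartheta(a)^2$, and the determinant of the associated $2\times2$ matrix is $\mu^2k\tau_+(\lambda)\bigl(k^2+k\tau_+(\lambda)+\tau_+^2(\lambda)\bigr)$, not $\mu^2k\tau_+(\lambda)(k+\tau_+(\lambda))^2$. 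Since trace and determinant are nonetheless positive, your matrix argument for positive definiteness stands (the paper instead completes the square correctly as $k\tau_+(\lambda)(k+\tau_+(\lambda))\bigl|\vartheta(a)\pm\tfrac{\vartheta'(a)}{k+\tau_+(\lambda)}\bigr|^2+\tfrac{k^2+k\tau_+(\lambda)+\tau_+^2(\lambda)}{k+\tau_+(\lambda)}|\vartheta'(a)|^2$), and the rest of your argument is sound.
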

\begin{proof}
Clearly, $\bB_{a,\lambda}$ is a bilinear form on  $H^2((-a,a))$ since the terms  $BV_{\pm a,\lambda}(\vartheta,\varrho)$ are well defined. We then establish the boundedness of $\bB_{a,\lambda}$.  The integral terms of $\bB_{a,\lambda}$ are bounded by 
\begin{equation}\label{Bound34termB}
\max\Big( \rho_+ k^2\sqrt{\frac{g}{L_0}} + \mu k^4,\rho_+\sqrt{\frac{g}{L_0}} +2\mu k^2, \mu\Big) \|\vartheta\|_{H^2((-a,a))}\|\varrho\|_{H^2((-a,a))}. 
\end{equation}
About the two first terms $BV_{\pm a,\lambda}(\vartheta,\varrho)$, it follows from the general Sobolev inequality that 
\[
\| \vartheta(y)\|_{C^{0,j}((-a,a))} \leqslant C(j,a) \|\vartheta(y)\|_{H^1((-a,a))} \quad\text{for all } j \in[0,1/2). 
\]
Therefore, we obtain 
\[
|\vartheta(a)|^2+ |\vartheta(-a)|^2 \leqslant C^{\star} \|\vartheta\|_{H^1((-a,a))}^2.
\]
Similarly, 
\[
|\vartheta'(a)|^2+ |\vartheta'(-a)|^2 \leqslant C^{\star} \|\vartheta'\|_{H^1((-a,a))}^2.
\]
Consequently, we get 
\begin{equation}\label{Bound12termB}
\begin{split}
|BV_{\pm a,\lambda}(\vartheta,\varrho)| &\leqslant C^{\star}  ( |\vartheta(\pm a)|+\vartheta'(\pm a)|) ( |\varrho(\pm a)|+ |\varrho'(\pm a)| ) \\
&\leqslant C^{\star} \|\vartheta\|_{H^2((-a,a))} \|\varrho\|_{H^2((-a,a))}.
\end{split}
\end{equation}
In view of \eqref{Bound34termB} and \eqref{Bound12termB}, we find that 
\begin{equation}\label{BoundBcontinuous}
|\bB_{a,\lambda}(\vartheta, \varrho)| \leqslant C^{\star}  \|\vartheta\|_{H^2((-a,a))} \|\varrho\|_{H^2((-a,a))},
\end{equation}
i.e. $\bB_{a,\lambda}$ is bounded.

We move to show the coercivity of $\bB_{a,\lambda}$. We have that 
\[
\begin{split}
\bB_{a,\lambda}(\vartheta,\vartheta) &= BV_{a,\lambda}(\vartheta,\vartheta) + BV_{-a,\lambda}(\vartheta,\vartheta) + \lambda \int_{-a}^a  \rho_0 (k^2|\vartheta|^2  + |\vartheta'|^2 ) dx \\
&\qquad+  \mu \int_{-a}^a (|\vartheta''|^2 + 2k^2 |\vartheta'|^2 +k^4 |\vartheta|^2 )dx.
\end{split}
\]
$BV_{a,\lambda}(\vartheta,\vartheta) \geqslant 0$ follows from the following equality 
\[
\begin{split}
\frac1{\mu}BV_{a,\lambda}(\vartheta,\vartheta) &=   k\tau_+(\lambda)(k+\tau_+(\lambda)) |\vartheta(a)|^2 +2 k\tau_+(\lambda) \vartheta(a)\vartheta'(a) \\
&\qquad+ (k+\tau_+(\lambda))|\vartheta'(a)|^2\\
&= k\tau_+(\lambda)(k+\tau_+(\lambda)) \Big| \vartheta(a) + \frac{\vartheta'(a)}{k+\tau_+(\lambda)}\Big|^2 \\
&\qquad+ \frac{k^2+k\tau_+(\lambda)+\tau_+^2(\lambda)}{k+\tau_+(\lambda)} |\vartheta'(a)|^2.
\end{split}
\]
We also obtain that $BV_{-a, \lambda}(\vartheta,\vartheta) \geqslant 0$. Therefore, we deduce that 
\begin{equation}\label{LowerBoundB}
\bB_{a,\lambda}(\vartheta,\vartheta) \geqslant  \mu \min (k^4,  2k^2, 1) \|\vartheta\|_{H^2((-a,a))}^2.
\end{equation}
It then tells us that $\bB_{a,\lambda}$ is a  continuous and coercive bilinear form on $H^2((-a,a))$. It follows from Reisz's representation theorem  that there is a unique operator $Y_{a,\lambda} \in  \mathcal{L}(H^2((-a,a)), (H^2((-a,a)))')$, that is also bijective, satisfying \eqref{EqMathcalB_a} for all $\vartheta, \varrho \in H^2((-a,a))$. Proof of Proposition \ref{PropPropertyR} is complete.
\end{proof}

The next proposition is to devoted to studying the properties of $Y_{a,\lambda}$.

\begin{proposition}\label{PropInverseOfR}
We have the following results.
\begin{enumerate}
\item For all $\vartheta \in H^2((-a,a))$, 
\[
Y_{a,\lambda}\vartheta=\lambda (\rho_0k^2\vartheta-(\rho_0\vartheta')')+\mu(\vartheta^{(4)}-2k^2 \vartheta''+k^4 \vartheta)
\]  in $ \mathcal{D}'((-a,a))$.  

\item Let $f\in L^2((-a,a))$ be given, there exists a unique solution  $\vartheta \in H^2((-a,a))$ of \begin{equation}\label{EqY=f}
Y_{a,\lambda}\vartheta = f \text{ in } ( H^2((-a,a)))',
\end{equation}
then $\vartheta\in H^4((-a,a))$ and satisfies the boundary conditions \eqref{LeftBoundary}--\eqref{RightBoundary}.
\end{enumerate}
\end{proposition}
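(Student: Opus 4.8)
The plan is to establish the two assertions in sequence, since the second relies on the distributional identity from the first.

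\textbf{Step 1: Identifying the operator.} First I would start from the defining relation \eqref{EqMathcalB_a}, namely $\langle Y_{a,\lambda}\vartheta, \varrho\rangle = \bB_{a,\lambda}(\vartheta, \varrho)$, and test it against $\varrho \in C_0^\infty((-a,a))$. For such compactly supported $\varrho$, all the boundary terms $BV_{\pm a,\lambda}(\vartheta,\varrho)$ vanish because $\varrho$ and $\varrho'$ are zero at $\pm a$, so only the two integral terms survive. I would then integrate by parts in the distributional sense: the term $\mu\int \vartheta''\varrho''$ becomes $\mu\langle \vartheta^{(4)}, \varrho\rangle$, the term $\lambda\int \rho_0 \vartheta'\varrho'$ becomes $-\lambda\langle(\rho_0\vartheta')', \varrho\rangle$, and the remaining integrals match directly. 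Collecting terms yields exactly $\lambda(\rho_0 k^2\vartheta - (\rho_0\vartheta')') + \mu(\vartheta^{(4)} - 2k^2\vartheta'' + k^4\vartheta)$ acting as a distribution on $(-a,a)$, which is assertion (1).

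\textbf{Step 2: Existence, uniqueness, and regularity.} For assertion (2), existence and uniqueness of $\vartheta \in H^2((-a,a))$ solving $Y_{a,\lambda}\vartheta = f$ follow immediately from the bijectivity of $Y_{a,\lambda}$ already granted by Proposition \ref{PropPropertyR}, since $f \in L^2((-a,a)) \hookrightarrow (H^2((-a,a)))'$. For the regularity bootstrap, I would use Step 1: the identity $\mu\vartheta^{(4)} = f - \lambda(\rho_0 k^2\vartheta - (\rho_0\vartheta')') - \mu(-2k^2\vartheta'' + k^4\vartheta)$ holds in $\mathcal{D}'((-a,a))$. Since $\vartheta \in H^2$ and $\rho_0 \in C^1$, the right-hand side lies in $H^{-1}$ initially, giving $\vartheta^{(4)} \in H^{-1}$, hence $\vartheta \in H^3$; feeding this back shows the right-hand side is in $L^2$, so $\vartheta^{(4)} \in L^2$ and $\vartheta \in H^4((-a,a))$.

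\textbf{Step 3: Recovering the boundary conditions.} This is the step I expect to be the main obstacle, since it is where the boundary terms $BV_{\pm a,\lambda}$ do the work. Now that $\vartheta \in H^4$, I would return to \eqref{EqMathcalB_a} but test against a \emph{general} $\varrho \in H^2((-a,a))$, not just compactly supported ones. Integrating the two integral terms by parts fully and using the distributional equation from Step 1 to cancel the interior contribution against $\langle f, \varrho\rangle$, what remains is a sum of boundary terms at $\pm a$ coming both from the integration by parts and from the explicit $BV_{\pm a,\lambda}(\vartheta,\varrho)$ expressions \eqref{ValueThetaRhoAt-A}--\eqref{ValueThetaRhoAtA}. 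Since this combination must vanish for \emph{every} $\varrho \in H^2$, and the boundary values $\varrho(\pm a), \varrho'(\pm a)$ are independent degrees of freedom, I would match coefficients of $\varrho(a), \varrho'(a), \varrho(-a), \varrho'(-a)$ separately. The delicate bookkeeping is to verify that the coefficients of $\varrho(a)$ and $\varrho'(a)$ reproduce precisely the two relations in \eqref{RightBoundary} (and analogously \eqref{LeftBoundary} at $-a$): the $\mu\int\vartheta''\varrho''$ integration by parts contributes $\mu\vartheta''(a)\varrho'(a) - \mu\vartheta'''(a)\varrho(a)$, the $2k^2\mu\int\vartheta'\varrho'$ term contributes $2k^2\mu\vartheta'(a)\varrho(a)$, and the $\lambda\int\rho_0\vartheta'\varrho'$ term contributes $\lambda\rho_+\vartheta'(a)\varrho(a)$ (using $\rho_0(a)=\rho_+$); combining these with the explicit derivatives of $BV_{a,\lambda}$ with respect to $\varrho(a)$ and $\varrho'(a)$, and invoking the relation $\lambda\nu_+ = \tau_+^2(\lambda) - k^2$ to rewrite $\lambda\rho_+ = \mu(\tau_+^2(\lambda)-k^2)$, should collapse everything into \eqref{RightBoundary} after factoring. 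The analogous computation at $-a$ yields \eqref{LeftBoundary}, completing the proof.
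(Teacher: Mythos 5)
Your proposal follows essentially the same route as the paper's proof: identify the distributional form of $Y_{a,\lambda}$ by testing against $C_0^\infty((-a,a))$, bootstrap to $H^4$ regularity from the equation, and then recover \eqref{LeftBoundary}--\eqref{RightBoundary} by testing against general $\varrho\in H^2$ and matching the coefficients of $\varrho(\pm a)$ and $\varrho'(\pm a)$, using $\lambda\nu_\pm=\tau_\pm^2(\lambda)-k^2$ to collapse the boundary terms. The only cosmetic difference is in the regularity step, where the paper uses an explicit antiderivative construction to get $(\vartheta'')'\in L^2$ while you interpolate through $H^3$; both are standard and correct.
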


\begin{proof}
Let  $\varrho \in C_0^{\infty}((-a,a))$, it follows from Proposition \ref{PropPropertyR} that, for $\vartheta\in H^2(-a,a)$ 
\begin{equation}\label{EqIntegralBform}
\lambda \int_{-a}^a  \rho_0 (k^2\vartheta  \varrho + \vartheta'  \varrho') dx + \mu \int_{-a}^a (\vartheta''  \varrho'' + 2k^2 \vartheta'  \varrho' +k^4 \vartheta  \varrho)dx = \langle Y_{a,\lambda}\vartheta,  \varrho \rangle.
\end{equation}
We respectively define $(\vartheta'')'$ and $(\vartheta'')''$ in the distributional sense as the first and second derivative of $\vartheta''$ which is in $L^2((-a,a))$. Hence, \eqref{EqIntegralBform} is equivalent to
\begin{equation}\label{EqIntegralBform2}
\lambda \int_{-a}^a  \rho_0 (k^2\vartheta  \varrho + \vartheta'  \varrho') dx + \mu \int_{-a}^a( 2k^2 \vartheta'  \varrho' +k^4 \vartheta  \varrho)dx+ \langle (\vartheta'')'' ,\varrho\rangle = \langle Y_{a,\lambda}\vartheta,  \varrho \rangle.
\end{equation}
for all $\varrho \in C_0^{\infty}((-a,a))$.  We deduce from \eqref{EqIntegralBform2} that
\begin{equation}\label{EqIntegralTransformB}
 \int_{-a}^a (\lambda(k^2 \rho_0 \vartheta -(\rho_0 \vartheta')')+\mu( -2k^2 \vartheta'' + k^4 \vartheta))  \varrho dx + \mu \langle (\vartheta'')'' ,  \varrho \rangle = \langle Y_{a,\lambda}\vartheta,  \varrho \rangle
\end{equation}
for all $\varrho \in C_0^{\infty}((-a,a))$. The first assertion follows.

Let $f\in L^2((-a,a))$ and $\vartheta \in H^2((-a,a))$ be the solution of \eqref{EqY=f},  we then improve the regularity of the weak solution $\vartheta$ of \eqref{EqIntegralTransformB}. Indeed,  we rewrite \eqref{EqIntegralTransformB} as 
\begin{equation}\label{2ndEqIntegralTransformB}
\mu \langle (\vartheta'')'' , \varrho\rangle =  \int_{-a}^a (f+2\mu k^2 \vartheta'' -\mu k^4 \vartheta - \lambda k^2 \rho_0 \vartheta + \lambda (\rho_0 \vartheta')')  \varrho dx
\end{equation}
for all $\varrho \in C_0^{\infty}((-a,a))$. Since $(f+2\mu k^2 \vartheta'' -\mu k^4 \vartheta - \lambda k^2 \rho_0 \vartheta + \lambda (\rho_0 \vartheta')')$ belongs to $L^2((-a,a))$, it then follows from \eqref{2ndEqIntegralTransformB} that   $(\vartheta'')'' \in L^2((-a,a))$.  Furthermore, by usual distribution theory, we define $\Psi \in \mathcal{D}'((-a,a))$ such that 
\begin{equation}\label{EqDefinePsi}
\langle \Psi, \varrho \rangle = \langle (\vartheta'')'', \zeta_{\rho} \rangle
\end{equation}
for all $\varrho \in C_0^{\infty}$, where $\zeta_{\rho}(x) = \int_{-a}^x( \varrho(y) - \int_{-a}^a \varrho(s) ds) dy$. Hence, it can be seen that 
\[
\langle \Psi', \varrho \rangle = - \langle \Psi, \varrho' \rangle = -  \langle (\vartheta'')'', \zeta_{\rho'} \rangle = -\langle (\vartheta'')'',  \varrho \rangle
\]
that implies $(\vartheta'')'- \Psi \equiv \text{constant}$. In view of $(\vartheta'')'' \in L^2((-a,a))$ and \eqref{EqDefinePsi}, we know that $(\vartheta'')' \in L^2((-a,a))$. Since $\vartheta \in H^2((-a,a))$ and $(\vartheta'')', (\vartheta'')'' \in L^2((-a,a))$, it tells us that $\vartheta$ belongs to $H^4((-a,a))$.

By exploiting \eqref{2ndEqIntegralTransformB}, we then show that $\vartheta$ satisfies \eqref{LeftBoundary} and \eqref{RightBoundary}. Indeed, consider now $\varrho \in C^{\infty}((-a,a))$, one has, using the integration by parts 
\[
\begin{split}
\int_{-a}^a(\vartheta'')''(x) \varrho(x)dx&=(\vartheta'')'(a) \varrho(a)-(\vartheta'')'(-a) \varrho(-a)-(\vartheta'')(a) \varrho'(a)\\
&\qquad+(\vartheta'')(-a) \varrho'(-a)+\int_{-a}^a\vartheta''(x) \varrho''(x)dx.
\end{split}
\]
We perform on the other terms of \eqref{EqIntegralTransformB} the integration by parts, which yields
\[
\begin{split}
 &\lambda \int_{-a}^a  \rho_0 (k^2\vartheta  \varrho + \vartheta'  \varrho') dx +  \mu \int_{-a}^a (\vartheta''  \varrho'' + 2k^2 \vartheta'  \varrho' +k^4 \vartheta  \varrho)dx \\
&\quad- \lambda \rho_0\vartheta' \varrho \Big|_{-a}^a  + \mu \Big(  \vartheta''' \varrho \Big|_{-a}^a - \vartheta'' \varrho' \Big|_{-a}^a - 2k^2  \vartheta' \varrho \Big|_{-a}^a \Big) = \int_{-a}^a (Y_{a,\lambda}\vartheta ){\varrho} dx.
\end{split}
\]
It then follows from the definition of the bilinear form $\bB_{a,\lambda}$ that
\begin{equation}\label{EqBvImply}
BV_{a,\lambda}(\vartheta,\varrho) + BV_{-a,\lambda}(\vartheta,\varrho) = - \lambda \rho_0\vartheta' \varrho \Big|_{-a}^a  + \mu \Big(  \vartheta''' \varrho \Big|_{-a}^a - \vartheta'' \varrho' \Big|_{-a}^a - 2k^2  \vartheta' \varrho \Big|_{-a}^a \Big)
\end{equation}
for all  $\varrho \in C^{\infty}((-a,a))$.

By collecting all terms  corresponding to $\varrho(-a)$ in  \eqref{EqBvImply}, we deduce that
\[
\begin{split}
&\mu(k\tau_-(\lambda)(k+\tau_-(\lambda)) \vartheta(-a) -k\tau_-(\lambda) \vartheta'(-a)) \\
&\qquad=\lambda \rho_0(-a) \vartheta'(-a) - \mu( \vartheta'''(-a) - 2k^2 \vartheta'(-a)). 
\end{split}
\]
It yields
\[
\vartheta''(-a) - (k^2+k\tau_-(\lambda)+\tau_-^2(\lambda)) \vartheta'(-a)+ k\tau_-(\lambda)(k+\tau_-(\lambda)) \vartheta(-a) =0
\]
owing to the definition of $\tau_-(\lambda)$. Then, we collect all terms  corresponding to $\varrho(a)$ or  to $ \varrho'(\pm a)$ in  \eqref{EqBvImply} to conclude that $\vartheta$ satisfies \eqref{LeftBoundary} and \eqref{RightBoundary}.
This ends the proof of Proposition \ref{PropPropertyR}.
\end{proof}

We have the following proposition on $Y_{a,\lambda}^{-1}$.
\begin{proposition}\label{RemNormR}
The operator $Y_{a,\lambda}^{-1} : L^2((-a,a)) \to L^2((-a,a))$ is compact and self-adjoint. 
\end{proposition}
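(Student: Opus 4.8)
The plan is to realize $Y_{a,\lambda}^{-1}$ as the solution operator of the variational problem attached to $\bB_{a,\lambda}$, and to read off both properties from the symmetry, coercivity and continuity of $\bB_{a,\lambda}$ established in Proposition \ref{PropPropertyR}, combined with the compact Sobolev embedding $H^2((-a,a)) \hookrightarrow L^2((-a,a))$. First I would make precise the meaning of $Y_{a,\lambda}^{-1}$ as an operator on $L^2$. Using the Gelfand triple $H^2((-a,a)) \hookrightarrow L^2((-a,a)) \cong (L^2((-a,a)))' \hookrightarrow (H^2((-a,a)))'$, every $f \in L^2((-a,a))$ is identified with the functional $\varrho \mapsto \int_{-a}^a f\varrho\,dx$. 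By the second assertion of Proposition \ref{PropInverseOfR}, for such $f$ there is a unique $\vartheta \in H^2((-a,a))$ with $Y_{a,\lambda}\vartheta = f$, that is $\bB_{a,\lambda}(\vartheta,\varrho) = \int_{-a}^a f\varrho\,dx$ for all $\varrho \in H^2((-a,a))$; I then set $Y_{a,\lambda}^{-1}f := \vartheta$, regarded as an element of $L^2((-a,a))$ through $H^2 \hookrightarrow L^2$.

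The next step is to show that $Y_{a,\lambda}^{-1}$ maps $L^2$ boundedly into $H^2$. Testing the variational identity with $\varrho = \vartheta$ and invoking the coercivity estimate \eqref{LowerBoundB} gives
\[
\mu \min(k^4, 2k^2, 1)\,\|\vartheta\|_{H^2((-a,a))}^2 \leqslant \bB_{a,\lambda}(\vartheta,\vartheta) = \int_{-a}^a f\vartheta\,dx \leqslant \|f\|_{L^2((-a,a))}\,\|\vartheta\|_{H^2((-a,a))},
\]
whence $\|\vartheta\|_{H^2((-a,a))} \leqslant C^{\star}\|f\|_{L^2((-a,a))}$. To obtain compactness I would factor $Y_{a,\lambda}^{-1} : L^2((-a,a)) \to L^2((-a,a))$ as the bounded map $L^2((-a,a)) \to H^2((-a,a))$ just estimated, followed by the embedding $H^2((-a,a)) \hookrightarrow L^2((-a,a))$, which is compact by the Rellich--Kondrachov theorem on the bounded interval $(-a,a)$. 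Since the composition of a bounded operator with a compact one is compact, $Y_{a,\lambda}^{-1}$ is compact on $L^2((-a,a))$.

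For self-adjointness, I would take $f, g \in L^2((-a,a))$ and set $\vartheta = Y_{a,\lambda}^{-1}f$, $\eta = Y_{a,\lambda}^{-1}g$. Using the defining identity for $\eta$ with test function $\vartheta$, and the one for $\vartheta$ with test function $\eta$, I get
\[
\langle Y_{a,\lambda}^{-1}f, g\rangle_{L^2} = \int_{-a}^a \vartheta g\,dx = \bB_{a,\lambda}(\eta,\vartheta), \qquad \langle f, Y_{a,\lambda}^{-1}g\rangle_{L^2} = \int_{-a}^a f\eta\,dx = \bB_{a,\lambda}(\vartheta,\eta).
\]
Because $\bB_{a,\lambda}$ is symmetric — the three integral terms in \eqref{1stBilinearForm} are manifestly so, and the boundary forms $BV_{\pm a,\lambda}$ in \eqref{ValueThetaRhoAt-A} and \eqref{ValueThetaRhoAtA} are visibly symmetric in $(\vartheta,\varrho)$ — we have $\bB_{a,\lambda}(\vartheta,\eta) = \bB_{a,\lambda}(\eta,\vartheta)$, hence $\langle Y_{a,\lambda}^{-1}f, g\rangle_{L^2} = \langle f, Y_{a,\lambda}^{-1}g\rangle_{L^2}$, which is the claimed self-adjointness.

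The substantive ingredient is the compact embedding $H^2((-a,a)) \hookrightarrow L^2((-a,a))$; everything else runs inside the Lax--Milgram/Riesz framework already in place. The only genuinely delicate points are bookkeeping: verifying that under the Gelfand-triple identification the duality pairing $\langle\cdot,\cdot\rangle$ restricts to the $L^2$ inner product on $L^2((-a,a))$, so that $Y_{a,\lambda}^{-1}$ is unambiguously an operator from $L^2$ into $L^2$, and confirming the symmetry of $\bB_{a,\lambda}$. Both are routine, so I expect no real obstacle beyond carefully recording these identifications.
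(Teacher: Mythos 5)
Your proposal is correct and follows essentially the same route as the paper: compactness via factoring $Y_{a,\lambda}^{-1}$ through a compact Sobolev embedding on the bounded interval, and self-adjointness from the symmetry of $\bB_{a,\lambda}$ (equivalently, of the integral terms and of $BV_{\pm a,\lambda}$). The only cosmetic difference is that you land in $H^2((-a,a))$ using the coercivity estimate \eqref{LowerBoundB}, whereas the paper invokes the $H^4$ regularity from Proposition \ref{PropInverseOfR} and composes with the compact injection $H^4((-a,a))\hookrightarrow L^2((-a,a))$; both suffice.
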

\begin{proof}
It follows from Proposition \ref{PropInverseOfR} that $Y_{a,\lambda}$  admits an inverse operator $Y_{a,\lambda}^{-1}$ from $L^2((-a,a))$ to a subspace of $H^4((-a,a))$ requiring all elements satisfy \eqref{LeftBoundary}--\eqref{RightBoundary}, which is  symmetric  due to Proposition \ref{PropPropertyR}. We compose $Y_{a,\lambda}^{-1}$ with the continuous injection from $H^4((-a,a))$ to $L^2((-a,a))$. Notice that  the embedding $H^p((-a,a)) \hookrightarrow H^q((-a,a))$ for $p>q\geqslant 0$ is compact. Therefore,   $Y_{a,\lambda}^{-1}$ is compact and self-adjoint from $L^2((-a,a))$ to $L^2((-a,a))$.  Proposition \ref{RemNormR} is shown.
\end{proof}

\begin{remark}
In this paper, we choose to define the operator 
\[
\phi \mapsto  \lambda( \rho_0 k^2 \phi - (\rho_0 \phi')') +\mu (\phi^{(4)} - 2k^2 \phi'' + k^4 \phi) = Y_{a,\lambda}\phi
\]
with  boundary conditions \eqref{LeftBoundary}-\eqref{RightBoundary} through the Riesz representation theorem. We can also  define that by the following way.

$Y_{a,\lambda}$ is well defined on 
\[
D(Y_{a,\lambda})=\{ \phi \in C^4((-a,a)), \phi \text{ verifies } \eqref{LeftBoundary}-\eqref{RightBoundary} \}
\]
 and that we can extend $Y_{a,\lambda}$ over the closure of $D(Y_{a,\lambda}$).  Furthermore,  $Y_{a,\lambda}$ with the domain $H^4((-a,a))$ containing functions that satisfy \eqref{LeftBoundary}-\eqref{RightBoundary} is symmetric and positive. It follows from Friedrichs extension (see \cite[Theorem  4.3.1]{Hel10} e.g.) that $Y_{a,\lambda}$ admits a self-adjoint extension. 
\end{remark}

\subsection{A sequence of characteristic values}\label{BasisSolution}

We continue considering $\lambda \in (0,\sqrt{\frac{g}{L_0}}]$ and study the operator $S_{a,\lambda} := \cM Y_{a,\lambda}^{-1}\cM$, where $\cM$ is the operator of multiplication by $\sqrt{\rho'_0}$. Note that this choice prevents to consider a case where $\rho_0'$ could be negative.

\begin{proposition}
The operator $S_{a,\lambda} : L^2((-a,a)) \to L^2((-a,a))$ is compact and self-adjoint, under the hypothesis \eqref{AssumeRho2}. 
\end{proposition}
\begin{proof}
Due to the boundedness of $\rho_0'$, the operator $S_{a,\lambda}$ is well-defined from $L^2((-a,a))$ to itself. $Y_{a,\lambda}^{-1}$ is compact, so is $S_{a,\lambda}$.   Moreover, because both  the inverse $Y_{a,\lambda}^{-1}$ and $\cM$ are self-adjoint, the self-adjointness of $S_{a,\lambda}$ follows. 
\end{proof}
As a result of the spectral theory of compact and self-adjoint operators, the point spectrum of $S_{a,\lambda}$ is discrete, i.e. is a decreasing sequence $\{\gamma_n(k,\lambda)\}_{n\geqslant 1}$ of  positive eigenvalues of $S_{a,\lambda}$ that tends to 0 as $n\to \infty$, associated with normalized orthogonal eigenvectors $\{\varpi_n\}_{n\geqslant 1}$ in $L^2((-a,a))$. That means 
\[
\gamma_n(\lambda)\varpi_n = \cM Y_{a,\lambda}^{-1}\cM \varpi_n.
\]
so that with $\phi_n = Y_{a,\lambda}^{-1}\cM \varpi_n \in H^4((-a,a))$, one has
\begin{equation}\label{EqRf_n}
\gamma_n(\lambda) Y_{a,\lambda}\phi_n =  \rho_0' \phi_n
\end{equation}
and $\phi_n$ satisfies \eqref{LeftBoundary}-\eqref{RightBoundary}.
\eqref{EqRf_n} also tells us that $\gamma_n(\lambda) >0$ for all $n$. Indeed, we obtain 
\[
\gamma_n(\lambda) \int_{-a}^a (Y_{a,\lambda}\phi_n)  \phi_n dx = \int_{-a}^a \rho_0'|\phi_n|^2 dx.
\]
That implies
\begin{equation}\label{EqPhi_nB}
\gamma_n(\lambda) \bB_{a,\lambda}(\phi_n,\phi_n) = \int_{-a}^a \rho_0'|\phi_n|^2 dx.
\end{equation}
Since $\bB_{a,\lambda}(\phi_n,\phi_n) >0$ and $\rho_0' >0$ on $(-a,a)$, we know that $\gamma_n(\lambda)$ is positive.

For each $n$,  in order to verify that $\phi_n$ is a solution of \eqref{4thOrderEqPhi}, we are left to look for real values of $\lambda_n$  such that \eqref{EqFindLambda}.
To solve \eqref{EqFindLambda}, we have to prove that $\gamma_n(\lambda)$ is differentiable and decreasing in terms of $\lambda$, respectively in two next lemmas.

\begin{lemma}\label{LemGammaCont}
For each $n$, the functions $\gamma_n(\lambda)$ and $\phi_n$ are differentiable in terms of $\lambda \in (0,\sqrt{\frac{g}{L_0}}]$.
\end{lemma}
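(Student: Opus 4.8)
The plan is to realize $S_{a,\lambda}=\cM Y_{a,\lambda}^{-1}\cM$ as a real-analytic family of compact self-adjoint operators on $L^2((-a,a))$ and then to invoke the analytic perturbation theory of Kato (Rellich's theorem, \cite{Kato}, Chapter VII) to obtain differentiability of the eigenvalues $\gamma_n(\lambda)$ and of a choice of eigenvectors $\varpi_n$; the differentiability of $\phi_n=Y_{a,\lambda}^{-1}\cM\varpi_n$ then follows from that of $Y_{a,\lambda}^{-1}$ together with the product rule. The decisive structural feature is that the coercivity lower bound \eqref{LowerBoundB} is \emph{uniform} in $\lambda$, so that $Y_{a,\lambda}$ remains an isomorphism on a whole parameter neighborhood.

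First I would track the $\lambda$-dependence of the bilinear form \eqref{1stBilinearForm}. The viscous part $\mu\int_{-a}^a(\vartheta''\varrho''+2k^2\vartheta'\varrho'+k^4\vartheta\varrho)$ is independent of $\lambda$, the interior inertial part $\lambda\int_{-a}^a\rho_0(k^2\vartheta\varrho+\vartheta'\varrho')$ is affine in $\lambda$, and the only transcendental dependence sits in the boundary terms $BV_{\pm a,\lambda}$ through $\tau_\pm(\lambda)=(k^2+\lambda\nu_\pm)^{1/2}$. For $\lambda$ in a complex neighborhood $\Omega$ of the real interval $(0,\sqrt{\frac{g}{L_0}}]$, the quantities $k^2+\lambda\nu_\pm$ stay off the branch cut of the square root, so $\tau_\pm$, hence $BV_{\pm a,\lambda}$ and $\bB_{a,\lambda}$, extend holomorphically. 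Consequently $\lambda\mapsto Y_{a,\lambda}\in\mathcal L(H^2((-a,a)),(H^2((-a,a)))')$ is a holomorphic operator-valued family on $\Omega$.

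Next I would upgrade holomorphy to $Y_{a,\lambda}^{-1}$ and then to $S_{a,\lambda}$. For each real $\lambda\in(0,\sqrt{\frac{g}{L_0}}]$ the operator $Y_{a,\lambda}$ is bijective by \eqref{LowerBoundB}; since the invertible elements of $\mathcal L(H^2((-a,a)),(H^2((-a,a)))')$ form an open set and $\lambda\mapsto Y_{a,\lambda}$ is holomorphic, $Y_{a,\lambda}$ stays bijective on a complex neighborhood $\Omega$ of each $\lambda_0$, where the Neumann-series expansion $Y_{a,\lambda}^{-1}=\bigl(I+Y_{a,\lambda_0}^{-1}(Y_{a,\lambda}-Y_{a,\lambda_0})\bigr)^{-1}Y_{a,\lambda_0}^{-1}$ exhibits $\lambda\mapsto Y_{a,\lambda}^{-1}$ as holomorphic. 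Composing with the $\lambda$-independent bounded multiplication operator $\cM$ and the compact embedding $H^4((-a,a))\hookrightarrow L^2((-a,a))$, the family $S_{a,\lambda}$ is a holomorphic family of compact operators on $\Omega$, self-adjoint on the real axis.

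The last step is Rellich's theorem: for a self-adjoint analytic family on a real interval, near each $\lambda_0$ there exist real-analytic single-valued parametrizations of the eigenvalues and of an orthonormal system of eigenvectors, whence differentiability of each branch, with $\frac{d}{d\lambda}\gamma_n(\lambda)=\langle\varpi_n,(\partial_\lambda S_{a,\lambda})\varpi_n\rangle$ for a simple eigenvalue. The point to handle with care is the labeling: the ordered sequence $\gamma_1(\lambda)\geq\gamma_2(\lambda)\geq\cdots$ may a priori exhibit crossings, at which the ordered function is merely Lipschitz; the analytic branches supplied by Rellich need not follow this ordering, but they do furnish, on a neighborhood of any fixed $\lambda$, differentiable branches reproducing each $\gamma_n$, and this local differentiability is precisely what the later study of \eqref{EqFindLambda} requires. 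Transferring differentiability to $\phi_n=Y_{a,\lambda}^{-1}\cM\varpi_n$ is then immediate from the holomorphy of $Y_{a,\lambda}^{-1}$ and the differentiability of $\varpi_n$.
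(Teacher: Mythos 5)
Your argument is correct, and it follows the same overall skeleton as the paper (regularity of $\lambda\mapsto Y_{a,\lambda}$, transfer to $Y_{a,\lambda}^{-1}$ and $S_{a,\lambda}$ via the uniform coercivity \eqref{LowerBoundB} and a Neumann series, then spectral perturbation theory), but the key perturbation input is different. The paper only establishes that $\lambda\mapsto Y_{a,\lambda}$ is \emph{differentiable} (citing \cite[Example 1.15, Chapter VII]{Kato} for the family and \cite[Theorem 2.23, Chapter IV]{Kato} for the inverse) and then relies on its own Theorem \ref{ThmEvCont} in Appendix \ref{AppContinuous}, which reduces the infinite-dimensional eigenvalue problem to a finite-dimensional one by a Schur-complement argument on $E=\mathrm{Ker}(A(\lambda_0)-\gamma_0\,\mathrm{Id})$ and invokes the finite-dimensional theory of \cite[Chapter II, \S 5]{Kato}. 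You instead observe that the only nontrivial $\lambda$-dependence sits in $\tau_\pm(\lambda)=(k^2+\lambda\nu_\pm)^{1/2}$, which extends holomorphically off the branch cut, so that $S_{a,\lambda}$ is a bounded self-adjoint holomorphic family and Rellich's analytic perturbation theorem applies directly. Your route buys more (real-analytic eigenvalue and eigenvector branches, and an explicit Feynman--Hellmann formula) and dispenses with the appendix entirely; the paper's route is weaker in hypothesis (only $C^1$ dependence is needed) and is self-contained at the level of the finite-dimensional reduction. Your remark on the labeling of ordered eigenvalues at crossings is a genuine subtlety that the paper glosses over, and your resolution --- local differentiable branches reproducing each $\gamma_n$, which is all that the analysis of \eqref{EqFindLambda} needs --- is the right one.
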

\begin{proof}

The family $(Y_{a,\lambda})_{\lambda \in (0,\sqrt{\frac{g}{L_0}}]}$ is a family of  bounded  operators owing to Proposition \ref{RemNormR}. 
It can be seen that the boundary conditions \eqref{LeftBoundary}-\eqref{RightBoundary}  differentiable in the parameter $\lambda$ will tell us that $(Y_{a,\lambda})_{\lambda \in (0,\sqrt{\frac{g}{L_0}}]}$ is also a family of differentiable operators  on $\lambda$ by following  a generalized treatment of \cite[Example 1.15, Chapter VII, \$1.6]{Kato}.
Since $Y_{a, \lambda}^{-1}$ exists for all $\lambda \in (0,\sqrt{\frac{g}{L_0}}]$, it follows from \cite[Theorem 2.23, Chapter IV, \$2.6]{Kato} that $Y_{a,\lambda}^{-1}$ is differentiable for all $\lambda \in (0,\sqrt{\frac{g}{L_0}}]$, so is $S_{a,\lambda}$. We then apply the differentiable property of eigenvalues of self-adjoint and compact operators, demonstrated in Appendix \ref{AppContinuous} to deduce that $\gamma_n(\lambda)$ and $\phi_n$ are differentiable functions.
\end{proof}

\begin{lemma}\label{LemGammaDecrease}
For each $n$, the function $\gamma_n(\lambda)$ is decreasing in $\lambda \in (0,\sqrt{\frac{g}{L_0}}]$.
\end{lemma}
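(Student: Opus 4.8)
The plan is to exploit a Feynman--Hellmann type identity: I will renormalize the eigenfunction so that $1/\gamma_n(\lambda)$ equals $\bB_{a,\lambda}(\phi_n,\phi_n)$, differentiate in $\lambda$, and reduce the monotonicity of $\gamma_n$ to the monotonicity of the map $\lambda\mapsto\bB_{a,\lambda}(\phi,\phi)$ for \emph{fixed} $\phi$. Since $\gamma_n(\lambda)>0$, proving that $\gamma_n$ is decreasing is equivalent to proving that $\lambda\mapsto 1/\gamma_n(\lambda)$ is (strictly) increasing, and I will compute its derivative directly.

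First I would put \eqref{EqRf_n} in weak form via \eqref{EqMathcalB_a}: for every $\varrho\in H^2((-a,a))$,
\[
\bB_{a,\lambda}(\phi_n,\varrho)=\frac{1}{\gamma_n(\lambda)}\int_{-a}^a \rho_0'\phi_n\varrho\,dx .
\]
By \eqref{EqPhi_nB} one has $\int_{-a}^a\rho_0'|\phi_n|^2\,dx=\gamma_n(\lambda)\bB_{a,\lambda}(\phi_n,\phi_n)>0$, so I may renormalize the eigenfunction (which is differentiable in $\lambda$ by Lemma \ref{LemGammaCont}, as is this positive quantity) to arrange $\int_{-a}^a\rho_0'|\phi_n|^2\,dx=1$ for all $\lambda$; then $1/\gamma_n(\lambda)=\bB_{a,\lambda}(\phi_n,\phi_n)$. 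Differentiating this identity, and writing $\partial_\lambda\bB_{a,\lambda}$ for the form obtained by differentiating only the explicit $\lambda$-dependence of the coefficients, the product rule for the bounded bilinear form gives
\[
\frac{d}{d\lambda}\Big(\frac{1}{\gamma_n(\lambda)}\Big)=(\partial_\lambda \bB_{a,\lambda})(\phi_n,\phi_n)+2\,\bB_{a,\lambda}(\phi_n,\partial_\lambda\phi_n).
\]
The cross term vanishes: testing the weak equation with $\varrho=\partial_\lambda\phi_n$ gives $\bB_{a,\lambda}(\phi_n,\partial_\lambda\phi_n)=\gamma_n^{-1}\int_{-a}^a\rho_0'\phi_n\,\partial_\lambda\phi_n\,dx$, while differentiating the normalization (whose integrand carries no explicit $\lambda$) yields $\int_{-a}^a\rho_0'\phi_n\,\partial_\lambda\phi_n\,dx=0$. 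Hence $\frac{d}{d\lambda}(1/\gamma_n)=(\partial_\lambda\bB_{a,\lambda})(\phi_n,\phi_n)$, and it remains to show this is positive.

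Finally I would differentiate \eqref{1stBilinearForm} term by term. The $\mu$-integral is $\lambda$-independent, and the remaining integral contributes $\int_{-a}^a\rho_0(k^2|\phi_n|^2+|\phi_n'|^2)\,dx$, which is strictly positive since $\phi_n\not\equiv0$ and $\rho_0>0$. The delicate part — and the main obstacle — is the two boundary terms $BV_{\pm a,\lambda}$, whose $\lambda$-dependence enters through $\tau_\pm(\lambda)=(k^2+\lambda\nu_\pm)^{1/2}$. Using $d\tau_+/d\lambda=\nu_+/(2\tau_+)>0$ together with \eqref{ValueThetaRhoAtA}, I find
\[
\frac{1}{\mu}\partial_\lambda BV_{a,\lambda}(\phi_n,\phi_n)=\frac{\nu_+}{2\tau_+(\lambda)}\Big(k(k+2\tau_+(\lambda))\,\phi_n(a)^2+2k\,\phi_n(a)\phi_n'(a)+\phi_n'(a)^2\Big),
\]
and the bracket is a quadratic form in $(\phi_n(a),\phi_n'(a))$ with matrix $\left(\begin{smallmatrix}k(k+2\tau_+)&k\\ k&1\end{smallmatrix}\right)$, whose lower-right entry is positive and whose determinant equals $2k\tau_+(\lambda)>0$; hence it is positive semidefinite and $\partial_\lambda BV_{a,\lambda}(\phi_n,\phi_n)\ge 0$, and the identical computation at $-a$ gives $\partial_\lambda BV_{-a,\lambda}(\phi_n,\phi_n)\ge0$. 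Combining the three contributions yields $(\partial_\lambda\bB_{a,\lambda})(\phi_n,\phi_n)>0$, so $1/\gamma_n$ is strictly increasing and $\gamma_n(\lambda)$ is strictly decreasing, as claimed. As a conceptual cross-check, the same monotonicity follows from the Courant--Fischer min--max formula for $1/\gamma_n(\lambda)$ once one knows, as established along the way, that $\lambda\mapsto\bB_{a,\lambda}(\phi,\phi)$ is increasing for every fixed $\phi$ while the weight $\int_{-a}^a\rho_0'|\phi|^2\,dx$ is $\lambda$-independent.
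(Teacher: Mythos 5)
Your proof is correct and lands on exactly the same positive quantity as the paper's, but you get there by a genuinely different (and shorter) route. The paper differentiates the \emph{strong} form: it introduces $z_n=\frac{d\phi_n}{d\lambda}$, differentiates both the ODE \eqref{EqRf_n} and the boundary conditions \eqref{LeftBoundary}--\eqref{RightBoundary} to get \eqref{ZnAtMinusA}--\eqref{ZnAtPlusA}, multiplies by $\phi_n$, and then does a substantial amount of integration-by-parts bookkeeping to make all the terms in $z_n,\dots,z_n'''$ cancel against each other, arriving at
\[
\frac{d}{d\lambda}\Big(\frac1{\gamma_n}\Big)\int_{-a}^a\rho_0'|\phi_n|^2dx=\int_{-a}^a\rho_0(k^2|\phi_n|^2+|\phi_n'|^2)dx+\sum_{\pm}\Big(k\rho_\pm|\phi_n(\pm a)|^2+\tfrac{\rho_\pm}{2\tau_\pm(\lambda)}|\phi_n'(\pm a)\mp k\phi_n(\pm a)|^2\Big).
\]
You instead work entirely at the level of the quadratic form: normalizing $\int\rho_0'|\phi_n|^2=1$ so that $1/\gamma_n=\bB_{a,\lambda}(\phi_n,\phi_n)$, killing the cross term by symmetry of $\bB_{a,\lambda}$ plus the differentiated normalization, and differentiating only the explicit $\lambda$-dependence. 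This avoids differentiating the boundary conditions altogether: their contribution is packaged as $\partial_\lambda BV_{\pm a,\lambda}(\phi_n,\phi_n)\geq 0$, settled by a $2\times2$ determinant ($2k\tau_\pm>0$). Completing the square in your boundary bracket reproduces the paper's terms $k\rho_+|\phi_n(a)|^2+\frac{\rho_+}{2\tau_+}|\phi_n'(a)\pm k\phi_n(a)|^2$ exactly (the sign inside the square depends only on the sign convention for the off-diagonal entries of $BV_{a,\lambda}$, on which the paper is internally inconsistent between \eqref{ValueThetaRhoAtA} and the coercivity computation; it is immaterial for positivity). Two things your version buys: it makes transparent why the argument breaks down in Section \ref{SectGeneralProf} (one has no explicit formula for $n_{ij}^{\pm}$, hence no sign for $\partial_\lambda BV_{x_\pm,\lambda}$), and your closing Courant--Fischer remark would yield the monotonicity without invoking differentiability of $\gamma_n$ at all. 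The only points to state carefully are that $\int\rho_0'|\phi_n|^2>0$ (so the renormalization is legitimate, which \eqref{EqPhi_nB} gives) and that the normalized eigenfunction inherits differentiability from Lemma \ref{LemGammaCont}; you address both.
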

\begin{proof}
Let $z_n= \frac{d\phi_n}{d\lambda}$. Note that $\frac{d}{d\lambda}Y_{a,\lambda}u=\rho_0k^2u-(\rho_0u')'$.  It follows from \eqref{EqRf_n} that 
\begin{equation}\label{1stEqDeriTz_n}
\begin{split}
k^2\rho_0\phi_n- (\rho_0\phi_n')'+ Y_{a,\lambda}z_n = \frac1{\gamma_n(\lambda)} \rho_0'z_n+ \frac{d}{d\lambda}\Big( \frac1{\gamma_n(\lambda)}\Big)\rho_0'\phi_n
\end{split}
\end{equation}
on $(-a,a)$. In addition, we have that at $x=-a$,
\begin{equation}\label{ZnAtMinusA}
\begin{cases}
z_n''(-a)-(k+\tau_-(\lambda))z_n'(-a)+k\tau_-(\lambda) z_n(-a)\\
\qquad\qquad= \frac{\nu_-}{2\tau_-(\lambda)} \phi_n'(-a) - \frac{k\nu_-}{2\tau_-(\lambda)}\phi_n(-a),\\
z_n'''(-a)- (k^2+k\tau_-(\lambda)+\tau_-^2(\lambda))z_n'(-a)+k\tau_-(\lambda)(k+\tau_-(\lambda))z_n(-a) \\
\qquad\qquad= \Big(\frac{k\nu_-}{2\tau_-(\lambda)}+\nu_-\Big)\phi_n'(-a) - \Big(\frac{k^2\nu_-}{2\tau_-(\lambda)}+k\nu_-\Big)\phi_n(-a) 
\end{cases}
\end{equation}
and that at $x=a$, 
\begin{equation}\label{ZnAtPlusA}
\begin{cases}
z_n''(a)+(k+\tau_+(\lambda))z_n'(a)+k\tau_+(\lambda) z_n(a) \\
\qquad\qquad= - \frac{\nu_+}{2\tau_+(\lambda)} \phi_n'(a) - \frac{k\nu_+}{2\tau_+(\lambda)}\phi_n(a),\\
z_n'''(a)- (k^2+k\tau_+(\lambda)+\tau_+^2(\lambda))z_n'(a)- k\tau_+(\lambda)(k+\tau_+(\lambda))z_n(a) \\
\qquad\qquad= \Big(\frac{k\nu_+}{2\tau_+(\lambda)}+\nu_+\Big)\phi_n'(a) + \Big(\frac{k^2\nu_+}{2\tau_+(\lambda)}+k\nu_+\Big)\phi_n(a).
\end{cases}
\end{equation}
Multiplying by $\psi_n$ on both sides of \eqref{1stEqDeriTz_n} and then integrating by parts to obtain that 
\begin{equation}\label{2ndEqDeriTz_n}
\begin{split}
&\int_{-a}^a (k^2\rho_0\phi_n- (\rho_0\phi_n')') \phi_n dx+ \int_{-a}^a (Y_{a,\lambda}z_n) \phi_n dx \\
&\qquad\qquad= \frac1{\gamma_n(\lambda)} \int_{-a}^a \rho_0'z_n \phi_n dx+ \frac{d}{d\lambda}\Big( \frac1{\gamma_n(\lambda)}\Big) \int_{-a}^a\rho_0'|\phi_n|^2 dx.
\end{split}
\end{equation}
Thanks to the integration by parts, we have 
\begin{equation}\label{2ndEqDeriTz_nPsi_n}
\int_{-a}^a (k^2\rho_0\phi_n- (\rho_0\phi_n')') \phi_n dx = \int_{-a}^a \rho_0(k^2|\phi_n|^2+|\phi_n'|^2)dx - (\rho_0\phi_n' \phi_n)\Big|_{-a}^a
\end{equation}
and 
\begin{equation}\label{2ndEqDeriTz_nTz_n}
\begin{split}
\int_{-a}^a (Y_{a,\lambda}z_n) \phi_n dx&=  \int_{-a}^a (Y_{a,\lambda} \phi_n)z_n dx + \Big( \mu(z_n'''\phi_n -z_n''\phi_n' -2k^2z_n'  \phi_n)-\lambda\rho_0 z_n' \phi_n \Big)\Big|_{-a}^a \\
&\qquad- \Big(\mu(\phi_n'''z_n -\phi_n'' z_n'-2k^2\phi_n' z_n)-\lambda\rho_0 \phi_n'z_n\Big)\Big|_{-a}^a.
\end{split}
\end{equation}
Owing to \eqref{2ndEqDeriTz_nPsi_n}, \eqref{2ndEqDeriTz_nTz_n} and \eqref{EqRf_n}, \eqref{2ndEqDeriTz_n} becomes 
\begin{equation}\label{3rdEqDeriTz_n}
\begin{split}
&\int_{-a}^a \rho_0(k^2|\phi_n|^2+|\phi_n'|^2)dx   + \Big( \mu(z_n'''\phi_n -z_n''\phi_n' -2k^2z_n'  \phi_n)-\lambda\rho_0 z_n' \phi_n \Big)\Big|_{-a}^a \\
&\qquad\quad\qquad- \Big(\mu(\phi_n'''z_n -\phi_n'' z_n'-2k^2\phi_n' z_n)-\lambda\rho_0 \phi_n'z_n\Big)\Big|_{-a}^a- (\rho_0\phi_n' \phi_n)\Big|_{-a}^a\\
&=\frac{d}{d\lambda}\Big( \frac1{\gamma_n(\lambda)}\Big) \int_{-a}^a\rho_0'|\phi_n|^2 dx.
\end{split}
\end{equation}
Using \eqref{ZnAtMinusA}, we obtain 
\[
\begin{split}
&-\Big(\mu(z_n'''\phi_n -z_n''\phi_n' -2k^2z_n'  \phi_n)-\lambda\rho_0 z_n' \phi_n\Big)(-a) \\
&\qquad+ \Big(\mu(\phi_n'''z_n -\phi_n'' z_n'-2k^2\phi_n' z_n)-\lambda\rho_0 \phi_n'z_n\Big)(-a) +\rho_-\phi_n'(-a)\phi_n(-a)\\
&= \mu \Big( \frac{k^2\nu_-}{2\tau_-(\lambda)}+k\nu_-\Big)|\phi_n(-a)|^2-\mu \Big( \frac{k\nu_-}{2\tau_-(\lambda)}+\nu_-\Big)\phi_n'(-a)\phi_n(-a) \\
&\qquad- \mu \frac{k\nu_-}{2\tau_-(\lambda)}\phi_n(-a)\phi_n'(-a) +\mu\frac{\nu_-}{2\tau_-(\lambda)}|\phi_n'(-a)|^2 + \rho_-\phi_n'(-a)\phi_n(-a).
\end{split}
\]
Keep in mind that $\mu\nu_- =\rho_-$, one has 
\begin{equation}\label{4thEqDeriTz_n}
\begin{split}
&-\Big(\mu(z_n'''\phi_n -z_n''\phi_n' -2k^2z_n'  \phi_n)-\lambda\rho_0 z_n' \phi_n\Big)(-a) \\
&\qquad+ \Big(\mu(\phi_n'''z_n -\phi_n'' z_n'-2k^2\phi_n' z_n)-\lambda\rho_0 \phi_n'z_n\Big)(-a) +\rho_-\phi_n'(-a)\phi_n(-a)\\
&=k\rho_-|\phi_n(-a)|^2 + \frac{\rho_-}{2\tau_-(\lambda)}|\phi_n'(-a)-k\phi_n(-a)|^2.
\end{split}
\end{equation}
Similarly, using \eqref{ZnAtPlusA}, we obtain 
\begin{equation}\label{5thEqDeriTz_n}
\begin{split}
&\Big(\mu(z_n'''\phi_n -z_n''\phi_n' -2k^2z_n'  \phi_n)-\lambda\rho_0 z_n' \phi_n\Big)(a) \\
&\qquad- \Big(\mu(\phi_n'''z_n -\phi_n'' z_n'-2k^2\phi_n' z_n)-\lambda\rho_0 \phi_n'z_n\Big)(a) - \rho_-\phi_n'(a)\phi_n(a)\\
&= k\rho_+|\phi_n(a)|^2 + \frac{\rho_+}{2\tau_+(\lambda)}|\phi_n'(a)-k\phi_n(a)|^2.
\end{split}
\end{equation}
Combining \eqref{3rdEqDeriTz_n}, \eqref{4thEqDeriTz_n} and \eqref{5thEqDeriTz_n}, we deduce that
\begin{equation}
\begin{split}
&\frac{d}{d\lambda}\Big(\frac1{\gamma_n(\lambda)}\Big)\int_{-a}^a\rho_0'|\phi_n|^2dx\\&=\int_{-a}^a \rho_0(k^2|\phi_n|^2+|\phi_n'|^2)dx  + k\rho_-|\phi_n(-a)|^2 + \frac{\rho_-}{2\tau_-(\lambda)}|\phi_n'(-a)-k\phi_n(-a)|^2\\
&\qquad\quad+ k\rho_+|\phi_n(a)|^2 + \frac{\rho_+}{2\tau_+(\lambda)}|\phi_n'(a)-k\phi_n(a)|^2.
\end{split}
\end{equation}
It yields that $\frac1{\gamma_n(\lambda)}$ is increasing in $\lambda$, i.e. $\gamma_n(\lambda)$ is decreasing in $\lambda$. This ends the proof of Lemma \ref{LemGammaDecrease}.
\end{proof}
Now we are in proposition to solve \eqref{EqFindLambda}.
\begin{proposition}\label{PropExistLambda}
For each $n$, there exists only one positive $\lambda_n$ satisfying \eqref{EqFindLambda}.  In addition, $\lambda_n$ decreases towards 0 as $n$ goes to $\infty$.
\end{proposition}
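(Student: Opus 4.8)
The plan is to recast the scalar equation \eqref{EqFindLambda} as the problem of locating the zero of the function
\[
f_n(\lambda) := \gamma_n(\lambda) - \frac{\lambda}{gk^2}, \qquad \lambda \in \left(0, \sqrt{\tfrac{g}{L_0}}\,\right],
\]
and then to track how that zero moves with $n$. First I would record that $f_n$ is continuous (indeed differentiable) on $(0,\sqrt{g/L_0}]$ by Lemma \ref{LemGammaCont}, and \emph{strictly decreasing}: the term $\gamma_n(\lambda)$ is strictly decreasing by Lemma \ref{LemGammaDecrease}, while $\lambda\mapsto \lambda/(gk^2)$ is strictly increasing. Strict monotonicity forces $f_n$ to have at most one zero, which settles uniqueness as soon as existence is established.

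For existence I would produce a sign change of $f_n$ and invoke the intermediate value theorem. Near the left endpoint this is immediate: since $\gamma_n$ is positive and decreasing, $\gamma_n(\lambda)\geq \gamma_n(\sqrt{g/L_0})>0$ while $\lambda/(gk^2)\to 0$ as $\lambda\to 0^+$, so $f_n(\lambda)>0$ for all sufficiently small $\lambda>0$. The substantive step is the right endpoint, where I would show $f_n(\sqrt{g/L_0})<0$. To this end I rewrite \eqref{EqPhi_nB} as the Rayleigh quotient
\[
\gamma_n(\lambda) = \frac{\int_{-a}^a \rho_0'\,|\phi_n|^2\,dx}{\bB_{a,\lambda}(\phi_n,\phi_n)},
\]
drop the nonnegative boundary terms $BV_{\pm a,\lambda}$ and retain only the $\lambda$-weighted part in \eqref{1stBilinearForm}, so that $\bB_{a,\lambda}(\phi_n,\phi_n) \geq \lambda\int_{-a}^a \rho_0(k^2|\phi_n|^2+|\phi_n'|^2)\,dx$, and then use the pointwise bound $\rho_0'\leq L_0^{-1}\rho_0$ together with $\int\rho_0|\phi_n|^2 \leq k^{-2}\int\rho_0(k^2|\phi_n|^2+|\phi_n'|^2)$. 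This yields the clean estimate $\gamma_n(\lambda)\leq (\lambda k^2 L_0)^{-1}$ for every $n$ and $\lambda$; since the discarded viscous terms $\mu\int(|\phi_n''|^2+2k^2|\phi_n'|^2+k^4|\phi_n|^2)$ are strictly positive for nontrivial $\phi_n$, the inequality is in fact strict. Evaluating at $\lambda=\sqrt{g/L_0}$ gives $\gamma_n(\sqrt{g/L_0}) < (k^2\sqrt{gL_0})^{-1} = \sqrt{g/L_0}/(gk^2)$, i.e. $f_n(\sqrt{g/L_0})<0$. Combining the two endpoint signs with continuity produces a zero $\lambda_n\in(0,\sqrt{g/L_0})$, unique by the monotonicity above. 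I note that this same estimate re-derives the upper bound of Lemma \ref{LemEigenvalueReal}.

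For the asymptotics $\lambda_n\downarrow 0$ I would exploit that the eigenvalues are ordered, $\gamma_{n+1}(\lambda)\leq \gamma_n(\lambda)$ for every fixed $\lambda$. Evaluating at $\lambda_n$ gives $f_{n+1}(\lambda_n)=\gamma_{n+1}(\lambda_n)-\lambda_n/(gk^2)\leq \gamma_n(\lambda_n)-\lambda_n/(gk^2)=0$; as $f_{n+1}$ is strictly decreasing with its unique zero at $\lambda_{n+1}$, this forces $\lambda_{n+1}\leq\lambda_n$, so $(\lambda_n)$ is nonincreasing and converges to some $\lambda_\star\geq 0$. If $\lambda_\star>0$, then on one hand $\gamma_n(\lambda_n)=\lambda_n/(gk^2)\geq \lambda_\star/(gk^2)>0$, while on the other hand $\gamma_n(\lambda_n)\leq \gamma_n(\lambda_\star)\to 0$ as $n\to\infty$ by the spectral decay of the compact self-adjoint operator $S_{a,\lambda_\star}$; this contradiction yields $\lambda_\star=0$.

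The hard part is the right-endpoint inequality $f_n(\sqrt{g/L_0})<0$: everything hinges on extracting the uniform Rayleigh-quotient bound $\gamma_n(\lambda)\leq (\lambda k^2 L_0)^{-1}$ from \eqref{EqPhi_nB} and on the exact matching, at $\lambda=\sqrt{g/L_0}$, of this bound with the line $\lambda/(gk^2)$. The remaining ingredients — monotonicity for uniqueness, ordering in $n$ and spectral decay for the limit — are then routine consequences of Lemmas \ref{LemGammaCont}–\ref{LemGammaDecrease} and the compactness of $S_{a,\lambda}$.
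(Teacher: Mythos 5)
Your proposal is correct and follows essentially the same route as the paper: the same Rayleigh-quotient bound from \eqref{EqPhi_nB} to force a sign change at $\lambda=\sqrt{g/L_0}$, the monotonicity of Lemma \ref{LemGammaDecrease} for uniqueness, and the ordering $\gamma_{n+1}\leq\gamma_n$ plus spectral decay of the compact operator at a fixed $\lambda_\star$ for the limit $\lambda_n\downarrow 0$. The only differences are cosmetic (you work with $f_n(\lambda)=\gamma_n(\lambda)-\lambda/(gk^2)$ where the paper compares $\lambda/\gamma_n(\lambda)$ with $gk^2$, and you get strictness from the discarded viscous term where the paper keeps $\mu k^4/\rho_+$ explicitly).
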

\begin{proof}
Using \eqref{EqPhi_nB}, we know that
\[
\frac1{\gamma_n(\lambda)} \int_{-a}^a \rho_0' |\phi_n|^2 dx = \int_{-a}^a (Y_{a,\lambda}\phi_n) \phi_n dx = \bB_{a,\lambda}(\phi_n,\phi_n), 
\]
that implies
\[
\frac1{L_0\gamma_n(\lambda)}  \geqslant \lambda  k^2+ \frac{\mu k^4}{\rho_+}.
\]
Consequently, for all $n\geqslant 1$,
\begin{equation}\label{LimitGammaRight}
\lim_{\lambda \to \sqrt{\frac{g}{L_0}}} \frac{\lambda}{\gamma_n(\lambda)} > gk^2.
\end{equation}
As $0<\frac{1}{\gamma_n(\lambda)}$ and it is a increasing function, $\frac{1}{\gamma_n(\lambda)}\leq \frac{1}{\gamma_n(\frac12\sqrt{\frac{g}{L_0}})}$ for all $\lambda\leq \frac12 \sqrt{\frac{g}{L_0}}$. That implies
\begin{equation}\label{LimitGammaLeft}
\lim_{\lambda \to 0} \frac{\lambda}{\gamma_n(\lambda)} \leq \lim_{\lambda\to 0} \frac{\lambda}{\gamma_n(\frac12\sqrt\frac{g}{L_0})}= 0 \quad\text{for all } n\geqslant 1.
\end{equation}
Combining \eqref{LimitGammaRight}, \eqref{LimitGammaLeft} and using Lemma \ref{LemGammaDecrease}, we deduce that there is only one $\lambda_n \in (0,\sqrt{\frac{g}{L_0}})$ satisfying \eqref{EqFindLambda} for each $n\geqslant 1$.

We then prove that the sequence $(\lambda_n)_{n\geq 1}$ is  decreasing. Indeed, if $\lambda_m<\lambda_{m+1}$ for some $m\geq 1$, we have  
\[
\gamma_m(\lambda_m) > \gamma_m(\lambda_{m+1}).
\]
Meanwhile, we also have
\[
\gamma_m(\lambda_{m+1}) > \gamma_{m+1}(\lambda_{m+1}).
\]
That implies 
\[
\frac{\lambda_m}{gk^2}= \gamma_m(\lambda_m) >  \gamma_{m+1}(\lambda_{m+1}) = \frac{\lambda_{m+1}}{gk^2}.
\]
That contradiction tells us that $(\lambda_n)_{n\geq 1}$ is a decreasing sequence.  Suppose that 
\[
\lim_{n\to \infty} \lambda_n = c_0 > 0.
\]
Note that 
\[
\gamma_n(c_0) \geq \gamma_n(\lambda_n) = \frac{\lambda_n}{gk^2}.
\]
Let $n\to \infty$, we get a contradiction that $0\geq c_0$. Hence, $\lambda_n$ decreases towards 0 as $n\to \infty$.
We conclude Proposition \ref{PropExistLambda}.
\end{proof}

\subsection{Proof of Theorem \ref{MainThm1}}

Let $\lambda_n$ be found from Proposition \ref{PropExistLambda} and $\phi_n(x)= Y_{a,\lambda_n}^{-1}\cM \varpi_n(x)$ on $(-a,a)$. 
Keep in mind our computations in Subsection \ref{SolOuterRegion}, we extend $\phi_n$ to the whole line by requiring $\phi_n$ satisfies \eqref{LeftSol} and \eqref{RightSol}
for some constants $A_{n,1}^{\pm}$ and  $A_{n,2}^{\pm}$ as $\lambda=\lambda_n$. Those constants $A_{n,1}^{\pm}$ and  $A_{n,2}^{\pm}$ are defined by 
\begin{equation}\label{EqA+}
\begin{cases}
\phi_n(a) = A_{n,1}^+ + A_{n,2}^+, \\
\phi_n'(a) = kA_{n,1}^+ +  \sqrt{k^2+\lambda_n \nu_+}A_{n,2}^+.
\end{cases}
\end{equation}
and by
\begin{equation}\label{EqA-}
\begin{cases}
\phi_n(-a) = A_{n,1}^- + A_{n,2}^-, \\
\phi_n'(-a) = kA_{n,1}^- +   \sqrt{k^2+\lambda_n \nu_-}A_{n,2}^-.
\end{cases}
\end{equation}
Since $\phi_n $ is not trivial, there exists  $i\geqslant 0$ such that $\phi_n^{(i)}(a) \neq 0$, which ensures that, at least, one of $A_{n,1}^{\pm}$  is nonzero. Similarly,  at least, one of $A_{n,2}^{\pm}$ is nonzero.

Solving \eqref{EqA+} and \eqref{EqA-}, we get that 
\begin{equation}\label{ConstA+}
A_{n,1}^+ = \frac{\sqrt{k^2+\lambda_n \nu_+} \phi_n(a) - \phi_n'(a)}{\sqrt{k^2+\lambda_n \nu_+} -k}, \quad A_{n,2}^+ = \frac{\phi_n'(a) -k \phi_n(a) }{\sqrt{k^2+\lambda_n \nu_+} -k}.
\end{equation}
and that
\begin{equation}\label{ConstA-}
A_{n,1}^- = \frac{\sqrt{k^2+\lambda_n \nu_-} \phi_n(-a) - \phi_n'(-a)}{\sqrt{k^2+\lambda_n \nu_-} -k}, \quad A_{n,2}^- = \frac{\phi_n'(-a) -k \phi_n(-a) }{\sqrt{k^2+\lambda_n \nu_-} -k}.
\end{equation}
Therefore, the function $\phi_n$ is a regular solution of \eqref{4thOrderEqPhi} as $\lambda =\lambda_n$ for each $n\geqslant 1$. Proof of Theorem \ref{MainThm1} is complete.

\section{The strictly increasing profile case}\label{SectGeneralProf}

In this section,  let $(e_1,e_2,e_3,e_4)$ be the canonical basis of $\R^4$ and we pay  attention to an increasing profile $\rho_0$ satisfying  \eqref{IntegralRho-0} and \eqref{2RhoDeri}.  We will use the Frobenius matrix norm $\|\cdot\|_F$  and the Euclidean vector norm $\|\cdot\|_2$.

\subsection{Solutions decaying to 0 at infinity and reduction to a problem on a finite interval}

 Let
\begin{equation}\label{MatrixL}
L(x,\lambda) = \begin{pmatrix}
0 & 1 &0 &0 \\
0 & 0 & 1 & 0\\
0 & 0 & 0 & 1\\
-\frac{\lambda k^2 \rho_0(x)}{\mu} - k^4 & 0 & \frac{\lambda \rho_0(x)}{\mu}+2k^2 & 0 
\end{pmatrix}
\end{equation}
and 
\begin{equation}\label{MatrixRemainder}
R(\lambda) =  \begin{pmatrix}
0 & 0 & 0 & 0 \\
0 & 0 & 0 & 0\\
0 & 0 & 0 & 0\\
\frac{gk^2}{\lambda \mu} & \frac{\lambda}{\mu} & 0 &0
\end{pmatrix}
\end{equation}
We set $U = (\phi,\phi',\phi'',\phi''')^T$ and then rewrite \eqref{4thOrderEqPhi} as 
\begin{equation}\label{EqDiffU}
U'(x) =  (L(x,\lambda)+ \rho_0'(x)R(\lambda))U(x), 
\end{equation}
The eigenvalues of $L(x,\lambda)$, $\pm k$ and $\pm \sigma_0(x,\lambda)$, with $\sigma_0(x,\lambda)= \sqrt{k^2 + \frac{\lambda }{\mu}\rho_0(x)}$ are different for all $\lambda>0$ and for all $x\in \R$.  Furthermore,
\begin{equation}\label{ConditionL_R}
 \int_{-\infty}^{+ \infty}\vert L'(x,\lambda)\vert dx<+\infty, \quad \int_{-\infty}^{+\infty}\vert \rho_0'(x)R(\lambda)\vert dx<+\infty.
\end{equation}
\eqref{ConditionL_R} allow us to use \cite[Theorem 8.1, Chapter 3]{CL81} (see Appendix \ref{ThmGeneralSystem} for the statement)  to find bounded solutions of \eqref{EqDiffU} near $\pm \infty$.
We denote  $\sigma_\pm(\lambda)= \lim_{x\to \pm\infty}\sigma_0(x,\lambda)$.  Then, there exist $\bar x_-<\bar x_+$ such that we have the existence of bounded solutions $U_{1,2}^+$ on $(\bar x_+, +\infty)$ and $U_{3,4}^-$ on $(-\infty,\bar x_-)$ such that in a  neighborhood of $+\infty$, 
\begin{equation}\label{defiU+}\begin{array}{l}
e^{kx} U_1^+(x, \lambda) \rightarrow (-k^{-3},k^{-2}, -k^{-1}, 1)^T ,\\
\text{exp}\Big( \int_{{\bar x}_+}^x \sigma_0(y,\lambda)dy\Big)U_2^+(x, \lambda)\rightarrow (-\sigma_+^{-3}(\lambda), \sigma_+^{-2}(\lambda), -\sigma_+^{-1}(\lambda), 1)^T, \end{array}\end{equation}
and in a neighborhood of $-\infty$,
\begin{equation}\label{defiU-}\begin{array}{l}
e^{-kx } U_3^-(x, \lambda) \rightarrow (k^{-3},k^{-2}, k^{-1}, 1)^T, \\
\text{exp}\Big( -\int_{{\bar x}_-}^x \sigma_0(y,\lambda)dy\Big)U_4^-(x, \lambda) \rightarrow(\sigma_-^{-3}(\lambda), \sigma_-^{-2}(\lambda), \sigma_-^{-1}(\lambda), 1)^T.\end{array}\end{equation}

Let us prove that $U_1^+(x,\lambda)$ and $U_2^+(x, \lambda)$ are linearly independent. Through Cauchy-Lipschitz theorem, if they are linearly dependent at a particular $x_0$ then they are linearly dependent for all $x$, that is there exists $T(\lambda)$ such that 
\[
U_1^+(x, \lambda)=T(\lambda)U_2^+(x, \lambda).
\]
 In particular, as the limit of $kx-\int_{{\bar x}_+}^x \sigma_0(y,\lambda)dy$ when $x\rightarrow +\infty$ is equal to $-\infty$, one observes that 
\[
e^{kx}U_1^{+}(x, \lambda)= T(\lambda)\text{exp}\Big( kx-\int_{{\bar x}_+}^x \sigma_0(y,\lambda)dy\Big) \Big(U_2^+(x,\lambda) \text{exp}\Big(\int_{{\bar x}_+}^x \sigma_0(y,\lambda)dy\Big)\Big).
\] 
The right hand side of this identity converges to 0 when $x\to +\infty$, while the left hand side converges to $(-k^{-3},k^{-2}, -k^{-1}, 1)^T$ when $x\to +\infty$, contradiction. Similarly, $U_3^-(x, \lambda)$ and $U_4^-(x, \lambda)$ are linearly independent.

The aim of the next proposition is to reduce the study of Eq. \eqref{4thOrderEqPhi} on the real line to its study on a finite interval as in the previous section. This is really the key of our result for a smooth general profile because it entitles us to use the compact injection of $H^{j}((a,b))$ into $H^{j-1}((a, b)), j\geq 1$.

\begin{proposition}\label{PropEquivalentEq}
There exist $x_-^0\leq \bar x_-$ and $x_+^0\geq \bar x_+$ such that, for all $x_-\leq x_-^0$ and $x_+\geq  x_+^0$, there are constants $n_{ij}^{\pm}$ $ (i,j=1,2)$, depending on $x_{\pm}$ and $\lambda$ such that equation 
\[
\lambda^2(k^2\rho_0\phi-(\rho_0\phi')') +\lambda\mu(\phi^{(4)}-2k^2\phi''+k^4\phi) =gk^2\rho_0'\phi,
\]
 where $\phi \in H^4((x_-,x_+))$, supplemented with the boundary conditions at $x_-$ are \begin{equation}\label{GenLeftBound}
\begin{cases}
n_{11}^{-} \phi(x_-) + n_{12}^{-} \phi'(x_-) +  \phi''(x_{-})  = 0,\\
n_{21}^{-} \phi(x_-) + n_{22}^{-} \phi'(x_{-})  +  \phi'''(x_{-}) = 0
\end{cases}
\end{equation}
and at $x_+$, are
\begin{equation}\label{GenRightBound}
\begin{cases}
n_{11}^{+} \phi(x_+) + n_{12}^{+} \phi'(x_+) +  \phi''(x_{+})  = 0,\\
n_{21}^{+} \phi(x_{+}) + n_{22}^{+} \phi'(x_{+}) + \phi'''(x_{+}) = 0,
\end{cases}
\end{equation}
is equivalent to equation
\[
\lambda^2(k^2\rho_0\Phi-(\rho_0\Phi')') +\lambda\mu(\Phi^{(4)}-2k^2\Phi''+k^4\Phi) =gk^2\rho_0'\Phi,
\]
 where $\Phi \in  H^4(\R)$. 
 
In this case $\Phi=\phi$ on $(x_-,x_+)$ and on $(x_+,+\infty)$ (respectively $(-\infty,x_-)$), $\Phi$ is the first component  of  a linear combination of $U_{1,2}^+(x,\lambda)$ (see \eqref{defiU+}) (respectively $U_{3,4}^-(x,\lambda)$, see \eqref{defiU-}).
\end{proposition}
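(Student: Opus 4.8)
The plan is to reproduce, in this non-explicit setting, the reduction behind Lemma \ref{LemBoundSmooth}: the two scalar conditions at each endpoint are nothing but the requirement that the Cauchy datum $U(x_\pm)=(\phi,\phi',\phi'',\phi''')^T(x_\pm)$ belong to the two-dimensional space of solutions that decay at the corresponding infinity. Fix $\lambda>0$ and set $S_+(x_+,\lambda)=\mathrm{span}\{U_1^+(x_+,\lambda),U_2^+(x_+,\lambda)\}$ and $S_-(x_-,\lambda)=\mathrm{span}\{U_3^-(x_-,\lambda),U_4^-(x_-,\lambda)\}$ in $\R^4$. These are genuinely two-dimensional, since the linear independence of $U_1^+,U_2^+$ (resp. $U_3^-,U_4^-$) was established just before the proposition, and any solution of \eqref{EqDiffU} decaying at $+\infty$ (resp. $-\infty$) is a linear combination of them.

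First I would show that, for $x_+$ large enough and $x_-$ negative enough, the projection of $S_\pm$ onto the first two coordinates $(\phi,\phi')$ is a bijection, so that $S_\pm$ is the graph of a linear map $(u_1,u_2)\mapsto(u_3,u_4)$; writing this map as $-\bigl(\begin{smallmatrix} n_{11}^\pm & n_{12}^\pm\\ n_{21}^\pm & n_{22}^\pm\end{smallmatrix}\bigr)$ produces exactly \eqref{GenLeftBound}--\eqref{GenRightBound}, with $n_{ij}^\pm$ depending on $(x_\pm,\lambda)$. To check the bijectivity I would use the asymptotics \eqref{defiU+}: the $2\times2$ matrix formed by the first two components of $U_1^+,U_2^+$ has determinant equal, up to a nonzero exponential prefactor, to $k^{-3}\sigma_+^{-3}(k-\sigma_+)$, which is nonzero because $\sigma_+(\lambda)=\sqrt{k^2+\lambda\rho_+/\mu}>k$ for $\lambda>0$; the same computation with \eqref{defiU-} and $\sigma_-$ handles the left endpoint. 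This determines the thresholds $x_-^0\le\bar x_-$ and $x_+^0\ge\bar x_+$ and defines the coefficients $n_{ij}^\pm$.

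Next I would prove the equivalence. Going from the finite interval to the line, I take $\phi\in H^4((x_-,x_+))$ solving the equation together with \eqref{GenLeftBound}--\eqref{GenRightBound}; by construction these boundary conditions say precisely $U(x_\pm)\in S_\pm$, so there are unique coefficients for which a linear combination of $U_1^+,U_2^+$ (resp. $U_3^-,U_4^-$) has Cauchy datum at $x_+$ (resp. $x_-$) equal to $U(x_+)$ (resp. $U(x_-)$). Gluing the first components of these outer solutions to $\phi$ defines $\Phi$; the matching of the full vector $U$ makes $\Phi\in C^3(\R)$, so $\Phi^{(4)}$ carries no singular part at $x_\pm$, and since the outer pieces decay exponentially one gets $\Phi\in H^4(\R)$ solving the equation a.e. Conversely, any $\Phi\in H^4(\R)$ solving the equation restricts on $(x_+,+\infty)$ to a solution decaying at $+\infty$, hence to a combination of $U_1^+,U_2^+$; thus $U(x_+)\in S_+$, which by the first step is equivalent to \eqref{GenRightBound}, and symmetrically at $x_-$. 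Restricting $\Phi$ to $(x_-,x_+)$ gives the desired $\phi$, and the outer representation of $\Phi$ follows at once, which is the last assertion of the statement.

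I expect the second step to be the main obstacle. The asymptotic formulas \eqref{defiU+}--\eqref{defiU-} only control $U_{1,2}^+$ and $U_{3,4}^-$ near $\pm\infty$, so while the limiting determinant is manifestly nonzero thanks to $\sigma_\pm>k$, one still has to argue that it stays nonzero for all $x_+\ge x_+^0$ and $x_-\le x_-^0$; this is where the remainder control furnished by \cite[Theorem 8.1, Chapter 3]{CL81} (the $o(1)$ terms in the Levinson expansion) is used to push the nonvanishing from the limit to a half-line, thereby fixing $x_\pm^0$.
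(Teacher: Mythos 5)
Your proposal is correct and follows essentially the same route as the paper: the paper encodes membership of $\Theta(x_\pm,\lambda)$ in the decaying subspace via the wedge condition $U_1^+\wedge U_2^+\wedge\Theta=0$ and normalizes the two equations carrying $\Phi''$ and $\Phi'''$, which requires exactly your nonvanishing of the $2\times2$ minor $U_{11}^+U_{22}^+-U_{12}^+U_{21}^+$ (your graph-over-$(\phi,\phi')$ formulation), with the same limiting value $k^{-3}\sigma_+^{-3}(k-\sigma_+)<0$ and the same continuity argument fixing $x_\pm^0$. The gluing and converse steps also match the paper's proof.
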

\begin{proof}
Notice that  if $\Phi$ is a bounded solution of 
\[
\lambda^2(k^2\rho_0\Phi-(\rho_0\Phi')') +\lambda\mu(\Phi^{(4)}-2k^2\Phi''+k^4\Phi) =gk^2\rho_0'\Phi,
\]
$\Theta = (\Phi, \Phi', \Phi'', \Phi''')^T$ is a bounded solution of \eqref{EqDiffU}. 
Any decaying solution $\Psi$ of \eqref{EqDiffU} on $(\bar x_+, +\infty)$ belongs to the space spanned by $U_1^+(x, \lambda)$ and  $U_2^+(x, \lambda)$, which is of dimension 2 because they are linearly independent. It is equivalent to say that, for any $x_+\geq {\bar x}_+$, 
\begin{equation}
U_1^+(x_+, \lambda)\wedge U_2^+(x_+, \lambda)\wedge \Theta(x_+, \lambda)=0\label{eq-manifold+}
\end{equation}
\centerline{and}
\centerline{$\Theta(x_+, \lambda)$ belongs to the space spanned by $U_1^+(x_+, \lambda)$ and $U_2^+(x_+, \lambda)$}. 
\vskip 0.3 cm
Let us write $U_i^+ = (U_{i1}^+, U_{i2}^+, U_{i3}^+, U_{i4}^+)^T$ for $i=1,2$ and $U_i^-=(U_{i1}^-, U_{i2}^-, U_{i3}^-, U_{i4}^-)^T$ for $i=3,4$.
System (\ref{eq-manifold+}) is a system of four equations on the components of $\Theta(x_+,\lambda)$, hence there exists a couple of equations which are linearly independent (the system being of rank 2). Let us notice that two of these four equations contain, in $\Phi''$ and $\Phi'''$, respectively the term 
\[
(U_{11}^+U_{22}^+-U_{12}^+U_{21}^+)(x_+,\lambda)\Phi''(x_+, \lambda)
\]
 and 
 \[
 (U_{11}^+U_{22}^+-U_{12}^+U_{21}^+)(x_+,\lambda)\Phi'''(x_+, \lambda).
 \]
 As the limit, when $x_+\rightarrow +\infty$, of 
\[
\text{exp}\Big( kx_+ + \int_{\bar x_+}^{x_+} \sigma_0(y,\lambda) dy\Big)(U_{11}^+U_{22}^+-U_{12}^+U_{21}^+)(x_+,\lambda)
\]
 is 
\[
-\frac1{k^3\sigma_+^2(\lambda)}+\frac1{k^2\sigma_+^3(\lambda)} = -\frac{\lambda \rho_+}{\mu k^3\sigma_+^3(\lambda) (k+\sigma_+)}<0,
\]
 by continuity there exists a $x_+^0\geq \bar x_+$ such that,  for all $x_+\geq x_+^0$, 
\[
 (U_{11}^+U_{22}^+-U_{12}^+U_{21}^+)(x_+,\lambda)<0
\] hence the equations for (\ref{eq-manifold+}) on the components $e_1\wedge e_2\wedge e_3$ and  $e_1\wedge e_2\wedge e_4$ write as $N^+ \Theta(x_+,\lambda)=0$ with $N^+$ is a $4\times 2$ matrix of the form 
\[
N^+ = \begin{pmatrix} n_{11}^+ & n_{12}^+ & 1 &0 \\
n_{21}^+ & n_{22}^+ & 0 & 1
\end{pmatrix}.
\]
We  are now able to write the couple $N^+ \Theta(x_+,\lambda)=0$ as 
\[
\begin{cases}
n_{11}^{+} \Phi(x_+,\lambda) + n_{12}^{+} \Phi'(x_+,\lambda) +  \Phi''(x_{+},\lambda)  = 0,\\
n_{21}^{+} \Phi(x_{+},\lambda) + n_{22}^{+} \Phi'(x_{+},\lambda) + \Phi'''(x_{+},\lambda) = 0,
\end{cases}
\]
In a similar way, there exist $n_{ij}^-$ $(i,j=1,2)$ depending on $x_-$ and $\lambda$ such that 
\[
\begin{cases}
n_{11}^{-} \Phi(x_-,\lambda) + n_{12}^{-} \Phi'(x_-,\lambda) +  \Phi''(x_{-},\lambda)  = 0,\\
n_{21}^{-} \Phi(x_{-},\lambda) + n_{22}^{-} \Phi'(x_-,\lambda) + \Phi'''(x_{-},\lambda) = 0,
\end{cases}
\]
Hence $\Phi$ is solution of the ODE on $(x_-,x_+)$: 
 \[
\lambda^2(k^2\rho_0\Phi-(\rho_0\Phi')') +\lambda\mu(\Phi^{(4)}-2k^2\Phi''+k^4\Phi) =gk^2\rho_0'\Phi,
\]
with boundary conditions \eqref{GenLeftBound}, \eqref{GenRightBound}.

Conversely, assume that $\phi \in H^{4}((x_-,x_+))$ is a solution of equation
\[
\lambda^2(k^2\rho_0\phi-(\rho_0\phi')') +\lambda\mu(\phi^{(4)}-2k^2\phi''+k^4\phi) =gk^2\rho_0'\phi,
\]
 with boundary conditions \eqref{GenLeftBound}-\eqref{GenRightBound}.
 From the boundary conditions, we deduce that there exist $C_j^{+} (j=1,2), D_k^{-} ( k=3,4)$ such that 
\[
U(x_+, \lambda)=C_1^+U_1^+(x_+, \lambda)+C_2^+U_2^+(x_+, \lambda)
\]
 and 
 \[
 U(x_-, \lambda)=D_3^-U_3^-(x_-, \lambda)+D_4^-U_4^-(x_-, \lambda).
\] 
Then, through Cauchy-Lipschitz theorem, 
\[
U(x, \lambda)=C_1^+U_1^+(x, \lambda)+C_2^+U_2^+(x, \lambda) \quad\text{for all }x\geq x_+
\] 
and  
\[
U(x, \lambda)=D_3^-U_3^-(x, \lambda)+D_4^-U_4^-(x, \lambda) \quad\text{for all }x\leq x_-.
\]
 As these are decaying solutions at $\pm \infty$ respectively, and as there is no jump at $x_+$ (or $x_-$) for $\phi, \phi', \phi'', \phi'''$ (which have a meaning as $\phi$ is assumed to be in $H^{4}((x_-,x_+))$), the function 
\[
 \Phi(x)=
\begin{cases}
C_1^+U_{11}^+(x,\lambda)+C_2^+U_{21}^+(x,\lambda), \quad &\text{as } x\geq x_+ \\
\phi(x), \quad &\text{as }  x_-< x<x_+\\
 D_3^-U_{31}^-(x,\lambda)+D_4^-U_{41}^-(x,\lambda), \quad &\text{as } x\leq x_-
\end{cases} 
\]
belongs to $H^4(\R)$ and solves equation
\[
\lambda^2(k^2\rho_0\Phi-(\rho_0\Phi')') +\lambda\mu(\Phi^{(4)}-2k^2\Phi''+k^4\Phi) =gk^2\rho_0'\Phi,
\] on $\R$.
\end{proof}
Remark that in this proof we have no additional information on the values of $x_-^0$ and of $x_+^0$. This will be the aim of Section \ref{RefinedEstimates}.

\subsection{A bilinear form and a self-adjoint invertible operator}\label{SubSolInterval}
The aim of Section \ref{SubSolInterval} is to study the boundary terms that come from the bilinear form (\ref{2ndBilinearForm}) below.
Let
 \begin{equation}\label{BV_x+}
\begin{split}
BV_{x_+,\lambda}(\vartheta,\varrho) &:= -\lambda (\rho_0\vartheta' \varrho)(x_+) -\mu( n_{21}^+(x_+,\lambda) \vartheta(x_+) + n_{22}^+(x_+,\lambda) \vartheta'(x_+))\varrho(x_+) \\
&\quad+ \mu(n_{11}^+(x_+,\lambda) \vartheta(x_+)+ n_{12}^+(x_+,\lambda) \vartheta'(x_+))\varrho'(x_+) - 2k^2 \mu (\vartheta' \varrho)(x_+) 
\end{split}
\end{equation}
and by
\begin{equation}\label{BV_x-}
\begin{split}
BV_{x_-,\lambda}(\vartheta,\varrho) &:= \lambda (\rho_0\vartheta' \varrho)(x_-) + \mu( n_{21}^-(x_-,\lambda) \vartheta(x_-) + n_{22}^- (x_-,\lambda)\vartheta'(x_+))\varrho(x_-) \\
&\quad- \mu(n_{11}^-(x_-,\lambda) \vartheta(x_-) + n_{12}^-(x_-,\lambda) \vartheta'(x_-))\varrho'(x_-) + 2k^2 \mu (\vartheta'\varrho)(x_-). 
\end{split}
\end{equation}

\begin{lemma}
Necessary and sufficient conditions to get 
\begin{equation}\label{PositiveBV}
BV_{x_+,\lambda}(\vartheta,\vartheta)   \geq 0 \text{ and } BV_{x_-,\lambda}(\vartheta,\vartheta)   \geq 0
\end{equation}
 for all $\vartheta\in H^2([x_-, x_+])$ are
\begin{equation}
\label{ineq+}
\begin{split} 
&n_{12}^+(x_+,\lambda)\geq 0, \quad n_{21}^+(x_+,\lambda)\leq 0,  \\
& (n_{11}^+(x_+,\lambda)-n_{22}^+(x_+,\lambda)- k^2-\sigma_0^2(x_+,\lambda))^2+4 n_{12}^+n_{21}^+(x_+,\lambda)\leq 0,
 \end{split}
\end{equation}
and 
\begin{equation}
\label{ineq-}
\begin{split} 
& n_{12}^-(x_-,\lambda) \geq 0, \quad n_{21}^-(x_-,\lambda) \leq 0, \\
& (n_{11}^-(x_-,\lambda)  -n_{22}^-(x_-,\lambda) -k^2-\sigma_0^2(x_-,\lambda))^2+4 n_{12}^-n_{21}^-(x_-,\lambda)\leq 0.
 \end{split}
\end{equation}
\label{CoercivityLemma}
\end{lemma}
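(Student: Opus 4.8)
The plan is to reduce each functional inequality in \eqref{PositiveBV} to an elementary nonnegativity statement for a quadratic form in two real variables. Fix the right endpoint and set $u=\vartheta(x_+)$, $v=\vartheta'(x_+)$. Inspecting \eqref{BV_x+}, the quantity $BV_{x_+,\lambda}(\vartheta,\vartheta)$ depends on $\vartheta$ only through the pair $(u,v)$; call the resulting expression $q_+(u,v)$. Moreover the trace map $\vartheta\mapsto(\vartheta(x_+),\vartheta'(x_+))$ is surjective from $H^2((x_-,x_+))$ onto $\R^2$, since for any $(u,v)$ the affine function $x\mapsto u+v(x-x_+)$ lies in $H^2((x_-,x_+))$ and realizes it. Hence the requirement $BV_{x_+,\lambda}(\vartheta,\vartheta)\geq0$ for all $\vartheta$ is \emph{equivalent} to $q_+(u,v)\geq0$ for all $(u,v)\in\R^2$; the same reduction applies at $x_-$ with $u=\vartheta(x_-)$, $v=\vartheta'(x_-)$.

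First I would make $q_+$ explicit by setting $\varrho=\vartheta$ in \eqref{BV_x+} and collecting the coefficients of $u^2$, $uv$, $v^2$. The only term not already expressed through the $n_{ij}^+$ is $\lambda\rho_0(x_+)$, which I would eliminate using the definition of $\sigma_0$, namely $\lambda\rho_0(x_+)=\mu(\sigma_0^2(x_+,\lambda)-k^2)$. Together with the explicit $-2k^2\mu$ term this collapses the cross coefficient, yielding
\[
q_+(u,v)=\mu\Big(-n_{21}^+u^2+(n_{11}^+-n_{22}^+-k^2-\sigma_0^2(x_+,\lambda))\,uv+n_{12}^+v^2\Big).
\]
The computation at $x_-$ is entirely analogous, using $\lambda\rho_0(x_-)=\mu(\sigma_0^2(x_-,\lambda)-k^2)$.

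Then I would apply the classical criterion that a real quadratic form $au^2+buv+cv^2$ is nonnegative on $\R^2$ if and only if $a\geq0$, $c\geq0$ and $b^2-4ac\leq0$. Reading off $a=-\mu n_{21}^+$, $c=\mu n_{12}^+$ and $b=\mu(n_{11}^+-n_{22}^+-k^2-\sigma_0^2(x_+,\lambda))$ and cancelling the positive factors $\mu$ and $\mu^2$, the three conditions become exactly $n_{21}^+\leq0$, $n_{12}^+\geq0$ and $(n_{11}^+-n_{22}^+-k^2-\sigma_0^2(x_+,\lambda))^2+4n_{12}^+n_{21}^+\leq0$, which is \eqref{ineq+}; the parallel computation at $x_-$ produces \eqref{ineq-}.

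The content is elementary, so the care lies in two bookkeeping points rather than in a genuine obstacle. The first is the degenerate case $a=0$ (i.e.\ $n_{21}^+=0$): there $q_+=v(bu+cv)$, and nonnegativity for all $(u,v)$ forces $b=0$ (otherwise fixing $v=1$ and letting $u\to\mp\infty$ sends $q_+$ to $-\infty$); this is consistent with the criterion, since $b^2-4ac\leq0$ then reads $b^2\leq0$. The second is to keep both directions of the equivalence visible: necessity relies on the surjectivity of the trace map to exhibit, for each prescribed $(u,v)$, an admissible test function $\vartheta$, while sufficiency is immediate because $q_\pm$ governs $BV_{x_\pm,\lambda}(\vartheta,\vartheta)$ for every $\vartheta\in H^2((x_-,x_+))$.
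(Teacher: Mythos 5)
Your proof is correct and follows essentially the same route as the paper: both reduce $BV_{x_\pm,\lambda}(\vartheta,\vartheta)$ to a quadratic form in $(\vartheta(x_\pm),\vartheta'(x_\pm))$ using $\lambda\rho_0(x_\pm)=\mu(\sigma_0^2(x_\pm,\lambda)-k^2)$ to collapse the cross term, and then invoke the standard nonnegativity criterion for $au^2+buv+cv^2$. The one point you add that the paper leaves implicit is the necessity direction, justified by the surjectivity of the trace map $\vartheta\mapsto(\vartheta(x_\pm),\vartheta'(x_\pm))$; the paper's proof only writes out sufficiency, so your version is, if anything, slightly more complete.
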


\begin{proof}[Proof of Lemma \ref{CoercivityLemma}]
We treat only the case $BV_{x_+,\lambda}(\vartheta,\vartheta)\geq 0$. Since $\mu \sigma_0^2(x,\lambda) =\mu k^2 +\lambda\rho_0(x)$, we rewrite 
\[
\begin{split}
\frac1{\mu} BV_{x_+,\lambda}(\vartheta,\vartheta) &= -n_{21}^+(x_+,\lambda)|\vartheta(x_+)|^2  + n_{12}^+|\vartheta'(x_+)|^2 \\
&\quad + (n_{11}^+(x_+,\lambda) -n_{22}^+(x_+,\lambda) -k^2-\sigma_0^2(x_+,\lambda)) \vartheta(x_+)\vartheta'(x_+).
\end{split}
\]
We observe that it is a quadratic polynomial in $\vartheta (x_+), \vartheta'(x_+)$. The first case is the case where $n_{12}^+=n_{21}^+=0$. The inequality (\ref{ineq+}) imply that $BV_{x_+,\lambda}(\vartheta,\vartheta)= 0$ for all $\vartheta$. The second case is the case where at least one of these two real numbers is not zero. For example, if $n_{12}^+\neq0$, the inequality means that the polynomial 
\[
 n_{12}^+ t^2+ (n_{11}^+-n_{22}^+-k^2-\sigma_0^2)t- n_{21}^+
\]
is always of the sign of $n_{12}^+$ hence positive, hence $BV_{x_+,\lambda}(\vartheta,\vartheta)\geq 0$ for all $\vartheta\in H^2((x_-,x_+)$. Lemma \ref{CoercivityLemma} is proven.
\end{proof}

\begin{proposition}\label{PropInverseGeneralT}
There exists $x_-^1\leq x_-^0$, $x_+^1\geq x_+^0$ such that, for any $x_-\leq x_-^1$ and $x_+\geq x_+^1$ chosen arbitrarily
\begin{equation}\label{2ndBilinearForm}
\begin{split}
\bB_{x_-,x_+,\lambda}(\vartheta,\varrho):=&BV_{x_-,\lambda}(\vartheta,\varrho) + BV_{x_+,\lambda}(\vartheta,\varrho)  + \lambda \int_{x_-}^{x_+}  \rho_0 (k^2\vartheta  \varrho + \vartheta'  \varrho') dx \\
&\qquad+  \mu \int_{x_-}^{x_+} (\vartheta''  \varrho'' + 2k^2 \vartheta'  \varrho' +k^4 \vartheta  \varrho)dx, 
\end{split}
\end{equation}
is a bilinear form on $H^2((x_-,x_+))$, that is continuous and coercive. 
\end{proposition}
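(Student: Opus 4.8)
The plan is to mirror the two-part structure of Proposition~\ref{PropPropertyR} — continuity first, then coercivity — the only genuinely new ingredient being the sign of the boundary quadratic forms, which here are not given by closed formulas. For continuity I would bound the two integral contributions to $\bB_{x_-,x_+,\lambda}$, using $\rho_0\le\rho_+$, $\lambda\le\sqrt{g/L_0}$ and $0<\rho_0'<\rho_m$, by
\[ \max\Big(\rho_+ k^2\sqrt{\tfrac{g}{L_0}}+\mu k^4,\ \rho_+\sqrt{\tfrac{g}{L_0}}+2\mu k^2,\ \mu\Big)\,\|\vartheta\|_{H^2((x_-,x_+))}\|\varrho\|_{H^2((x_-,x_+))}, \]
exactly as in \eqref{Bound34termB}. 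For the boundary terms $BV_{x_\pm,\lambda}$ in \eqref{BV_x+}--\eqref{BV_x-}, the general Sobolev (trace) inequality gives $|\vartheta(x_\pm)|+|\vartheta'(x_\pm)|\le C^\star\|\vartheta\|_{H^2((x_-,x_+))}$, hence $|BV_{x_\pm,\lambda}(\vartheta,\varrho)|\le C^\star\|\vartheta\|_{H^2}\|\varrho\|_{H^2}$; summing yields boundedness.

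For coercivity I would first note that on the diagonal the term $\lambda\int_{x_-}^{x_+}\rho_0(k^2|\vartheta|^2+|\vartheta'|^2)\,dx$ is nonnegative and the viscous integral gives the lower bound $\mu\min(k^4,2k^2,1)\|\vartheta\|_{H^2((x_-,x_+))}^2$, exactly as in \eqref{LowerBoundB}. Thus the whole matter reduces to securing $BV_{x_+,\lambda}(\vartheta,\vartheta)\ge 0$ and $BV_{x_-,\lambda}(\vartheta,\vartheta)\ge 0$, which by Lemma~\ref{CoercivityLemma} is equivalent to the inequalities \eqref{ineq+} and \eqref{ineq-}. The task is therefore to choose $x_\pm^1$ making these hold.

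The key step I would carry out is an asymptotic analysis of the coefficients $n_{ij}^\pm$. From the row reduction of the rank-two system $N^+\Theta=0$ in Proposition~\ref{PropEquivalentEq}, each $n_{ij}^+(x_+,\lambda)$ is a quotient of a $2\times2$ minor of $[\,U_1^+\,|\,U_2^+\,](x_+,\lambda)$ by the minor $M_{12}:=(U_{11}^+U_{22}^+-U_{12}^+U_{21}^+)(x_+,\lambda)$. Inserting the asymptotics \eqref{defiU+}, every minor factors as $\exp\big(-kx_+-\int_{\bar x_+}^{x_+}\sigma_0(y,\lambda)\,dy\big)$ times the corresponding wedge product of the limiting vectors $(-k^{-3},k^{-2},-k^{-1},1)^T$ and $(-\sigma_+^{-3},\sigma_+^{-2},-\sigma_+^{-1},1)^T$, plus a vanishing remainder, and the common exponential cancels in each quotient. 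A direct computation then gives, as $x_+\to+\infty$,
\[ n_{11}^+\to k\sigma_+,\quad n_{12}^+\to k+\sigma_+,\quad n_{21}^+\to -k\sigma_+(k+\sigma_+),\quad n_{22}^+\to -(k^2+k\sigma_++\sigma_+^2), \]
which are exactly the explicit right-boundary coefficients of \eqref{RightBoundary} with $\tau_+(\lambda)$ replaced by $\sigma_+(\lambda)$. Since $\sigma_0(x_+,\lambda)\to\sigma_+$ too, the discriminant in \eqref{ineq+} tends to $4k\sigma_+\big(k\sigma_+-(k+\sigma_+)^2\big)=-4k\sigma_+(k^2+k\sigma_++\sigma_+^2)<0$, while $n_{12}^+\to k+\sigma_+>0$ and $n_{21}^+\to -k\sigma_+(k+\sigma_+)<0$. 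Hence all three inequalities of \eqref{ineq+} hold strictly in the limit, and by continuity in $x_+$ there is $x_+^1\ge x_+^0$ giving \eqref{ineq+} for every $x_+\ge x_+^1$; the symmetric analysis at $-\infty$ via \eqref{defiU-} produces $x_-^1\le x_-^0$ for \eqref{ineq-}. Combined with the integral lower bound, this proves coercivity.

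The main obstacle will be the asymptotic step itself: the individual minors all decay exponentially, so one must factor out the dominant exponential before taking limits and verify that the surviving coefficients are precisely the wedge products of the limit vectors, the nonvanishing (indeed negativity) of $M_{12}$ needed to divide having already been established in Proposition~\ref{PropEquivalentEq}. Among the three conditions the discriminant inequality is the only delicate one, being a cancellation-sensitive combination of all four coefficients; its limit is strictly negative, so the condition is stable under perturbation and survives for large $x_+$. The thresholds $x_\pm^1$ obtained this way may depend on $\lambda$, and producing a choice uniform in $\lambda\in[\epsilon_\star,\sqrt{g/L_0}]$ is precisely the purpose of Section~\ref{RefinedEstimates}.
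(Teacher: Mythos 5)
Your proposal is correct and follows essentially the same route as the paper: continuity via the Sobolev trace bounds as in Proposition \ref{PropPropertyR}, and coercivity reduced to Lemma \ref{CoercivityLemma} by computing the limits $\cn_{11}^+=k\sigma_+$, $\cn_{12}^+=k+\sigma_+$, $\cn_{21}^+=-k\sigma_+(k+\sigma_+)$, $\cn_{22}^+=-(k^2+k\sigma_++\sigma_+^2)$ and checking that the limiting discriminant equals $-4k\sigma_+(k^2+k\sigma_++\sigma_+^2)<0$, so that \eqref{ineq+}--\eqref{ineq-} persist for $x_+\geq x_+^1$, $x_-\leq x_-^1$ by continuity. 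The only cosmetic difference is that you extract the $n_{ij}^+$ as quotients of minors with the common exponential factored out, whereas the paper passes to the limit directly in the linear systems \eqref{SystemN11_N12}--\eqref{SystemN21_N22}; these are the same computation via Cramer's rule, and your closing remark that the thresholds may depend on $\lambda$ matches the paper's own caveat.
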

\begin{proof}

Recall $n_{ij}^{\pm}$ $ (i,j=1,2)$ are given in Proposition \ref{PropEquivalentEq} and that $BV_{x_-,\lambda}$, $BV_{x_+,\lambda}$ are given in \eqref{BV_x+}, \eqref{BV_x-}. 
 One observes that one needs to prove that  
\begin{equation}\label{ContinuityB}
|\bB_{x_-,x_+,\lambda}(\vartheta,\varrho)| \leq C^{\star}  \| \vartheta\|_{H^2((x_-, x_+))}\| \varrho\|_{H^2((x_-, x_+))}
\end{equation}
and that
\begin{equation}\label{CoerciveB}
\bB_{x_-,x_+,\lambda}(\vartheta,\vartheta)\geq C^{\star} \| \vartheta\|^2_{H^2((x_-, x_+))}.
\end{equation}

For positive $\lambda, \mu$ and $k$, we have 
\[
\begin{split}
\lambda \int_{x_-}^{x_+}  \rho_0 (k^2\vartheta  \varrho + \vartheta'  \varrho') dx_3 &+  \mu \int_{x_-}^{x_+} (\vartheta''  \varrho'' + 2k^2 \vartheta'  \varrho' +k^4 \vartheta  \varrho)dx\\
& \leq C^{\star} \| \vartheta\|_{H^2((x_-, x_+))}\| \varrho\|_{H^2((x_-, x_+))}
\end{split}
\]
and that 
\[
\lambda \int_{x_-}^{x_+}  \rho_0 (k^2\vert \vartheta \vert^2 +\vert  \vartheta' \vert^2) dx +  \mu \int_{x_-}^{x_+} (\vert \vartheta'' \vert^2+ 2k^2\vert  \vartheta' \vert^2 +k^4 \vert \vartheta\vert^2)dx \geq C^{\star} \|\vartheta\|_{H^2((x_-,x_+))}^2.
\]
Note that the condition $\mu>0$ is necessary to obtain the coercivity on $H^2((x_-, x_+))$. Note also that the case $\mu=0$ amounts to the inviscid Rayleigh-Taylor instability, for which similar results are known (and the corresponding problem needs only to be defined in $H^1$).
In addition 
\[\begin{split}
\vert BV_{x_+,\lambda}(\vartheta,\varrho) \vert &\leq C^\star(|\vartheta(x_+)|+|\varrho(x_+)|)(|\vartheta'(x_+)|+|\varrho'(x_+)|), \\
|BV_{x_-,\lambda}(\vartheta,\varrho) \vert &\leq C^\star(|\vartheta(x_-)|+|\varrho(x_-)|)(|\vartheta'(x_-)|+|\varrho'(x_-)|)
 \end{split}
\]
and the Sobolev embedding  yields \eqref{ContinuityB}. The continuity of $\bB_{x_-,x_+,\lambda}$ on $H^2((x_-, x_+))$ follows.

To show \eqref{CoerciveB}, it suffices to prove that \eqref{PositiveBV} holds. In view of Lemma \ref{CoercivityLemma}, we verify \eqref{ineq+}. Since $N^+ U_1^+(x_+,\lambda)=N^+U_2^+(x_+,\lambda)=0$, we have that $n_{ij}^+$ $(i,j=1,2)$ depend on $x_+$ and $\lambda$ and satisfy
\begin{equation}\label{SystemN11_N12}
\begin{cases}
n_{11}^{+} U_{11}^+(x_+,\lambda) + n_{12}^{+} U_{12}^+(x_+,\lambda)+  U_{13}^+(x_+,\lambda)  = 0,\\
n_{11}^{+} U_{21}^+(x_+,\lambda) + n_{12}^{+} U_{22}^+(x_+,\lambda) + U_{23}^+(x_+,\lambda) = 0,
\end{cases}
\end{equation}
and 
\begin{equation}\label{SystemN21_N22}
\begin{cases}
n_{21}^{+} U_{11}^+(x_+,\lambda) + n_{22}^{+} U_{12}^+(x_+,\lambda)+  U_{14}^+(x_+,\lambda)  = 0,\\
n_{21}^{+} U_{21}^+(x_+,\lambda) + n_{22}^{+} U_{22}^+(x_+,\lambda) + U_{24}^+(x_+,\lambda) = 0.
\end{cases}
\end{equation}
Let $\cn_{ij}^+(\lambda)$ be the limit of $n_{ij}^+(x_+,\lambda)$ as $x_+\to+\infty$. When $x_+\rightarrow +\infty$, (\ref{SystemN11_N12})-(\ref{SystemN21_N22}) converge to
\begin{equation}\begin{cases}
-\cn_{11}^{+}(\lambda) k^{-3} + \cn_{12}^{+}(\lambda) k^{-2}-k^{-1}  = 0,\\
-\cn_{11}^{+}(\lambda) \sigma_+^{-3}(\lambda)+ \cn_{12}^{+}(\lambda) \sigma_+^{-2}(\lambda) - \sigma_+^{-1}(\lambda)= 0,
\end{cases}
\end{equation}
and 
\begin{equation}\begin{cases}
- \cn_{21}^{+}(\lambda) k^{-3} +\cn_{22}^{+}(\lambda) k^{-2}+1  = 0,\\
- \cn_{21}^{+}(\lambda) \sigma_+^{-3}(\lambda)+ \cn_{22}^{+}(\lambda) \sigma_+^{-2}(\lambda) +1= 0,
\end{cases}
\end{equation}
hence 
\[
\cn_{11}^+(\lambda) = k\sigma_+(\lambda), \quad \cn_{12}^+(\lambda)= k+\sigma_+(\lambda)
\]
and
\[
\cn_{21}^+(\lambda) = -k\sigma_+(\lambda)(k+\sigma_+(\lambda)), \quad \cn_{22}^+(\lambda)=-(k^2+k\sigma_+(\lambda)+\sigma_+^2(\lambda)).
\]
One thus has
\[\begin{split}
&(\cn_{11}^+(\lambda)-\cn_{22}^+(\lambda)-k^2-\sigma_+^2(\lambda))^2+4\cn_{12}^+\cn_{21}^+(\lambda) \\
&=((k+\sigma_+(\lambda))^2 -k^2 -\sigma_+^2(\lambda))^2 -4k\sigma_+(\lambda)(k+\sigma_+(\lambda))^2 \\
&=-4k\sigma_+(\lambda)(k^2+k\sigma_+(\lambda)+\sigma_+^2(\lambda))<0.
\end{split}\] 
Hence, by continuity, there exists $x_+^1\geq x_+^0$ such that, for all $x_+\geq x_+^1$, 
\[
(n_{11}^+(x_+,\lambda)-n_{22}^+(x_+,\lambda)-k^2-\sigma_0^2(x_+,\lambda))^2+4 n_{12}^+n_{21}^+(x_+,\lambda) <0.
\] 
The proof of the existence of $x_-^1\leq x_-^0$ such that, for all $x_-\leq x_-^1$ such that  
\[
(n_{11}^-(x_-,\lambda)  -n_{22}^-(x_-,\lambda) -k^2-\sigma_0^2(x_-,\lambda))^2+4 n_{12}^-n_{21}^-(x_-,\lambda) <0
\]
follows the same pattern. Hence, by application of Lemma \ref{CoercivityLemma},
\eqref{CoerciveB} follow,
which ends the proof of Proposition \ref{PropInverseGeneralT}.
\end{proof}

Mimicking the arguments in Propositions \ref{PropPropertyR}, \ref{PropInverseOfR}, \ref{RemNormR} we  obtain the  following proposition. 

\begin{proposition}\label{PropPropertyB_lambda}
Let  $(H^2((x_-,x_+)))'$ be the dual space of $H^2((x_-,x_+))$ associated with the norm $\sqrt{\bB_{x_-,x_+,\lambda}(\cdot,\cdot)}$, there exists a unique operator  
\[Y_{x_-,x_+,\lambda} \in  \mathcal{L}(H^2((x_-,x_+)), (H^2((x_-,x_+))'),\] that is also bijective,  such that
 \begin{equation}\label{EqMathcalB_lambda}
\bB_{x_-,x_+,\lambda}(\vartheta, \varrho) = \langle Y_{x_-,x_+,\lambda}\vartheta,  \varrho\rangle
\end{equation}
for all $\vartheta, \varrho \in H^2((x_-,x_+))$. Furthermore, we have 
\begin{enumerate}
\item For all $\vartheta \in H^2((x_-,x_+))$, 
\[
Y_{x_-,x_+,\lambda}\vartheta=\lambda (k^2\rho_0\vartheta-(\rho_0\vartheta')')+\mu(\vartheta^{(4)}-2k^2\vartheta''+k^4\vartheta)
\]  in $ \mathcal{D}'((x_-,x_+))$.  

\item Let $f\in L^2((x_-,x_+))$ be given, there exists a unique  $\vartheta \in H^4((x_-,x_+))$ satisfying the boundary conditions \eqref{GenLeftBound}--\eqref{GenRightBound}.\begin{equation}\label{EqZ=f}
Y_{x_-,x_+,\lambda}\vartheta = f \text{ in } ( H^2((x_-,x_+)))',
\end{equation}

\item The operator $Y_{x_-,x_+,\lambda}^{-1} : L^2((x_-,x_+)) \to L^2((x_-,x_+))$ is compact and self-adjoint.
\end{enumerate}
\end{proposition}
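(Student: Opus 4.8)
The plan is to follow verbatim the three-step argument developed for the compactly supported profile in Propositions \ref{PropPropertyR}, \ref{PropInverseOfR} and \ref{RemNormR}, the only genuine difference being that the explicit boundary coefficients $k\tau_\pm(\lambda)$ are replaced throughout by the coefficients $n_{ij}^\pm(x_\pm,\lambda)$ supplied by Proposition \ref{PropEquivalentEq}. Since Proposition \ref{PropInverseGeneralT} already establishes that $\bB_{x_-,x_+,\lambda}$ is continuous and coercive on $H^2((x_-,x_+))$, the existence assertion is immediate: the Riesz representation theorem (equivalently, Lax--Milgram) associated with the inner product $\sqrt{\bB_{x_-,x_+,\lambda}(\cdot,\cdot)}$ produces a unique bijective $Y_{x_-,x_+,\lambda}\in\mathcal{L}(H^2((x_-,x_+)),(H^2((x_-,x_+)))')$ satisfying \eqref{EqMathcalB_lambda}.

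For assertion (1), I would test \eqref{EqMathcalB_lambda} against $\varrho\in C_0^{\infty}((x_-,x_+))$, for which the boundary contributions $BV_{x_\pm,\lambda}(\vartheta,\varrho)$ in \eqref{BV_x+}--\eqref{BV_x-} vanish identically. Only the two integral terms of \eqref{2ndBilinearForm} survive, and transferring derivatives in the distributional sense (defining $(\vartheta'')'$ and $(\vartheta'')''$ exactly as in the proof of Proposition \ref{PropInverseOfR}) identifies $Y_{x_-,x_+,\lambda}\vartheta$ with $\lambda(k^2\rho_0\vartheta-(\rho_0\vartheta')')+\mu(\vartheta^{(4)}-2k^2\vartheta''+k^4\vartheta)$ in $\mathcal{D}'((x_-,x_+))$.

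Assertion (2) is then a bootstrap followed by a boundary-term identification. Given $f\in L^2((x_-,x_+))$, the weak solution $\vartheta\in H^2$ of \eqref{EqZ=f} satisfies, in the distributional sense, $\mu(\vartheta'')''=f+2\mu k^2\vartheta''-\mu k^4\vartheta-\lambda k^2\rho_0\vartheta+\lambda(\rho_0\vartheta')'$, whose right-hand side lies in $L^2$; hence $(\vartheta'')''\in L^2$, and the same primitive argument as before upgrades $(\vartheta'')'$ to $L^2$, so that $\vartheta\in H^4((x_-,x_+))$. Once $\vartheta\in H^4$, I would test against a general $\varrho\in C^{\infty}([x_-,x_+])$ and integrate by parts; comparing the boundary terms so produced with the explicit forms $BV_{x_\pm,\lambda}$ and collecting the coefficients of $\varrho(x_\pm)$ and $\varrho'(x_\pm)$ separately recovers precisely the four conditions \eqref{GenLeftBound}--\eqref{GenRightBound}. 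Finally, for (3), the inverse $Y_{x_-,x_+,\lambda}^{-1}$ sends $L^2$ into the subspace of $H^4((x_-,x_+))$ cut out by \eqref{GenLeftBound}--\eqref{GenRightBound}; composing with the compact embedding $H^4\hookrightarrow L^2$ yields compactness, while the symmetry of $\bB_{x_-,x_+,\lambda}$ gives self-adjointness.

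The step I expect to be the main obstacle is the boundary-term matching inside assertion (2). In the compactly supported case the coefficients were explicit, so the identification of each boundary term was transparent; here the $n_{ij}^\pm(x_\pm,\lambda)$ are only characterised implicitly through the linear systems \eqref{SystemN11_N12}--\eqref{SystemN21_N22} solved by the decaying modes $U_{1,2}^+$ and $U_{3,4}^-$ of \eqref{defiU+}--\eqref{defiU-}. The point is nevertheless structural rather than computational: the forms $BV_{x_\pm,\lambda}$ were defined in \eqref{BV_x+}--\eqref{BV_x-} precisely so that the $n_{ij}^\pm$ occupy exactly the slots produced by the integration by parts, so that the bookkeeping closes as in Proposition \ref{PropInverseOfR} without ever needing a closed form for the $n_{ij}^\pm$.
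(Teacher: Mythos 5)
Your proposal is correct and follows exactly the route the paper takes: the paper gives no separate proof here, stating only that the proposition is obtained by mimicking Propositions \ref{PropPropertyR}, \ref{PropInverseOfR} and \ref{RemNormR}, which is precisely your plan, and your identification of the boundary-term matching in assertion (2) as the only non-routine step is accurate, since $BV_{x_\pm,\lambda}$ in \eqref{BV_x+}--\eqref{BV_x-} are built so that the coefficients of $\varrho(x_\pm)$ and $\varrho'(x_\pm)$ produced by integration by parts reproduce \eqref{GenLeftBound}--\eqref{GenRightBound} verbatim. The one point you (and the paper) leave implicit is that the symmetry of $\bB_{x_-,x_+,\lambda}$, needed for the self-adjointness in assertion (3), is no longer manifest from \eqref{BV_x+}--\eqref{BV_x-} as it was for \eqref{ValueThetaRhoAt-A}--\eqref{ValueThetaRhoAtA}; it amounts to the identity $n_{11}^\pm+n_{22}^\pm=-(k^2+\sigma_0^2(x_\pm,\lambda))$, which does hold (a Wronskian-type identity for the decaying solutions $U^\pm_i$, consistent with the limits $\cn_{ij}^\pm$ computed in the proof of Proposition \ref{PropInverseGeneralT}).
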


It is then straightforwad to obtain the following spectral result. The discrete spectrum of $\cM Y_{x_-,x_+,\lambda}^{-1}\cM$  is  a sequence of eigenvalues $(\gamma_n(\lambda))_{n\geqslant 1}$. The function $\gamma_n(\lambda)$ is a continuous function of $\lambda$ for all $n$ from the arguments of Lemma \ref{LemGammaCont}. The problem of finding a characteristic value  amounts to solving the equality 
\eqref{EqFindLambda} as before.  However, no control is possible on $\gamma_n(\lambda)$ when $\lambda$ goes to 0. In addition, no control of $x_\pm^0$ and $x_\pm^1$ (because it may depend on $\lambda$) is available to have a possibility of having estimates of $\gamma_n(\lambda)$ as well. Having an explicit (even if not optimal) criterion on $x_\pm^1$ such that the inequalities of coercivity \eqref{ineq+}-\eqref{ineq-} are true is the aim of the refined estimates of the solutions $U_{1,2}^+$ and $U_{3,4}^-$ (deducing in Propositions \ref{PropSolUplusInfty}, \ref{PropSolUminusInfty} below) which follow. That will be postponed to Section \ref{RefinedEstimates} below.

\subsection{The finding of characteristic value $\lambda_n$ and Proof of Theorem \ref{MainThm2}}\label{SectProfThm2}

Let $\epsilon_\star>0$ be given, we look for $\lambda_n \in (\epsilon_\star, \sqrt{\frac{g}{L_0}})$ satisfying \eqref{EqFindLambda}. However, unlike the previous case with $\rho_0' \geq 0$ being compactly supported, we do not have here the decrease of $\gamma_n$ on $\lambda$ to obtain the uniqueness of $\lambda_n$.

\begin{proposition}\label{PropFiniteLambda}
For $0<\epsilon_{\star}\ll 1$, there exists $N(\epsilon_{\star})\in \N$  such that  there is at least one positive $\lambda_n \in (\epsilon_{\star}, \sqrt{\frac{g}{L_0}})$ satisfying \eqref{EqFindLambda} for each $1\leq n\leq N(\epsilon_{\star})$. 
\end{proposition}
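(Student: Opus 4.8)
The plan is to fix $n$ and solve \eqref{EqFindLambda} by the intermediate value theorem for the continuous function $F_n(\lambda):=\gamma_n(\lambda)-\lambda/(gk^2)$ on the compact interval $[\epsilon_\star,\sqrt{g/L_0}]$, the two endpoints supplying opposite signs. The first step is to invoke Proposition~\ref{coercive-interval} of Section~\ref{RefinedEstimates} to fix a single pair $(x_-,x_+)$ for which the coercivity inequalities \eqref{ineq+}--\eqref{ineq-} hold simultaneously for every $\lambda\in[\epsilon_\star,\sqrt{g/L_0}]$; this is exactly where the lower cut-off $\lambda\geq\epsilon_\star$ is spent. On this now $\lambda$-independent interval the eigenvalues $(\gamma_n(\lambda))_{n\geq1}$ of $\cM Y_{x_-,x_+,\lambda}^{-1}\cM$ form, for each $\lambda$, a decreasing positive sequence tending to $0$, and each $\gamma_n(\lambda)$ depends continuously on $\lambda$ by the argument of Lemma~\ref{LemGammaCont} recalled after Proposition~\ref{PropPropertyB_lambda}; hence every $F_n$ is continuous on $[\epsilon_\star,\sqrt{g/L_0}]$.

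For the right endpoint I would reproduce the coercivity bound of Proposition~\ref{PropExistLambda}. Writing $\phi_n$ for an eigenfunction attached to $\gamma_n(\lambda)$ and using $\frac{1}{\gamma_n(\lambda)}\int_{x_-}^{x_+}\rho_0'|\phi_n|^2\,dx=\bB_{x_-,x_+,\lambda}(\phi_n,\phi_n)$, the positivity $BV_{x_\pm,\lambda}(\phi_n,\phi_n)\geq0$ from Lemma~\ref{CoercivityLemma} and the pointwise inequality $\rho_0'\leq\rho_0/L_0$ coming from $L_0^{-1}=\sup_{\R}\rho_0'/\rho_0$ give, after discarding the nonnegative boundary and higher-order terms, $\frac{1}{\gamma_n(\lambda)}>L_0k^2\lambda$, i.e. $\frac{\lambda}{\gamma_n(\lambda)}>L_0k^2\lambda^2$. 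Evaluating at $\lambda=\sqrt{g/L_0}$ yields $\lambda/\gamma_n(\lambda)>gk^2$, hence $F_n(\sqrt{g/L_0})<0$ for \emph{every} $n$.

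The left endpoint is where the novelty lies: having lost the monotonicity of $\gamma_n$ that drove Proposition~\ref{PropExistLambda}, I cannot send $\lambda\to0$, so I freeze $\lambda=\epsilon_\star$ and count. Let $N(\epsilon_\star)$ be the number of indices $n$ with $\gamma_n(\epsilon_\star)>\epsilon_\star/(gk^2)$, which is finite since $\gamma_n(\epsilon_\star)\to0$. For each such $n$ one has $F_n(\epsilon_\star)>0$, and together with $F_n(\sqrt{g/L_0})<0$ the intermediate value theorem produces a characteristic value $\lambda_n\in(\epsilon_\star,\sqrt{g/L_0})$ solving \eqref{EqFindLambda}; no uniqueness is claimed. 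To see that $N(\epsilon_\star)\in\N^\star$ — and in fact $N(\epsilon_\star)\to\infty$ as $\epsilon_\star\to0$ — I would bound $\gamma_n(\epsilon_\star)$ from below by Courant--Fischer, testing the quotient $\int\rho_0'|\phi|^2/\bB_{x_-,x_+,\epsilon_\star}(\phi,\phi)$ on a fixed finite-dimensional space of functions compactly supported in the interior of $(x_-,x_+)$. For such $\phi$ the boundary terms $BV_{x_\pm,\epsilon_\star}$ vanish, so $\bB_{x_-,x_+,\epsilon_\star}(\phi,\phi)\leq\mu\int(|\phi''|^2+2k^2|\phi'|^2+k^4|\phi|^2)\,dx+\sqrt{g/L_0}\int\rho_0(k^2|\phi|^2+|\phi'|^2)\,dx$ is bounded above independently of $\epsilon_\star$, while $\int\rho_0'|\phi|^2>0$ by \eqref{2RhoDeri}; this produces a lower bound on $\gamma_n(\epsilon_\star)$ independent of $\epsilon_\star$ that eventually beats the vanishing threshold $\epsilon_\star/(gk^2)$.

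The main obstacle is the first step, not the intermediate value theorem: everything hinges on producing one interval $(x_-,x_+)$ valid uniformly in $\lambda\in[\epsilon_\star,\sqrt{g/L_0}]$, for otherwise neither the continuity of $\lambda\mapsto\gamma_n(\lambda)$ nor the comparison of the two endpoints is meaningful — this is the whole purpose of the Volterra-series estimates of Section~\ref{RefinedEstimates}. A secondary point requiring care is that the interior test functions used in the lower bound must sit inside the interval for all the small $\epsilon_\star$ under consideration; since the coercivity interval only enlarges as $\epsilon_\star\downarrow0$, fixing their supports in a common compact subset of $\R$ (where $\rho_0'>0$ by \eqref{2RhoDeri}) makes the lower bound genuinely $\epsilon_\star$-uniform.
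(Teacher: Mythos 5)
Your argument is correct and follows essentially the same route as the paper: continuity of $\lambda\mapsto\gamma_n(\lambda)$ on a $\lambda$-uniform interval $(x_-,x_+)$, the coercivity bound forcing $\lambda/\gamma_n(\lambda)>gk^2$ at the right endpoint, and a count of the indices $n$ for which the opposite sign holds at $\lambda=\epsilon_\star$ (the paper uses $b_n(\epsilon_\star)=\inf_{\lambda\geq\epsilon_\star}\gamma_n(\lambda)$ where you use $\gamma_n(\epsilon_\star)$, an immaterial difference), followed by the intermediate value theorem. Your additional min--max argument with interior test functions to show $N(\epsilon_\star)\geq 1$ is a worthwhile supplement, since the paper asserts $N(\epsilon_\star)\in[1,+\infty)$ without justification.
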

\begin{proof}
 We still have
\begin{equation}\label{2LimitgL_0}
\lim_{\lambda \to \sqrt{\frac{g}{L_0}}} \frac{\lambda}{\gamma_n(\lambda)} > gk^2.
\end{equation}
and  $b_n(\epsilon_{\star}) := \inf_{\lambda \geqslant \epsilon_{\star}} \gamma_n(\lambda)> 0$. Notice that $\{b_n(\epsilon_{\star})\}_{n\geqslant 1}$ is a sequence decreasing to $0$ as $n\to \infty$.  Set
\[
N(\epsilon_{\star}) := \sup \Big\{n| b_n(\epsilon_{\star}) > \frac{\epsilon_{\star}}{gk^2} \Big\} \in [1, +\infty).
\]
For $1\leq n\leq N(\epsilon_{\star})$,
\begin{equation}\label{GammaEpsilonStar}
\lim_{\lambda\to \epsilon_{\star}} \frac{\lambda}{\gamma_n(\lambda)} \leq \lim_{\lambda\to \epsilon_{\star}} \frac{\lambda}{b_n(\epsilon_{\star})} = \frac{\epsilon_{\star}}{b_n(\epsilon_{\star})} < gk^2.
\end{equation}

It then follows from \eqref{2LimitgL_0} and \eqref{GammaEpsilonStar} that  we have at least one desired $\lambda_n \in (\epsilon_{\star}, \sqrt{\frac{g}{L_0}})$ for $1\leq n\leq N(\epsilon_{\star})$.
\end{proof}

Now, we are able to prove Theorem \ref{MainThm2}.
\begin{proof}
Let $\varpi_n$ be an eigenfunction associated with $\gamma_n$ of $\cM Y_{x_+,x_-,\lambda}^{-1}\cM$.
That means 
\[
\cM Y_{x_-,x_+,\lambda_n}^{-1}\cM \varpi_n = \gamma_n(\lambda_n,k)\varpi_n = \frac{\lambda_n}{gk^2} \varpi_n.
\]
Hence, $\phi_n=  Y_{x_-,x_+,\lambda_n}^{-1}\cM \varpi_n \in H^4((x_-,x_+))$ satisfies
\[
\lambda_n Y_{x_-,x_+,\lambda_n} \phi_n = gk^2 \rho_0' \phi_n
\]
on $(x_-,x_+)$. In order to conclude that $\phi_n$ is a solution of \eqref{4thOrderEqPhi},  we then extend $\phi_n$ on $\R$ by continuity.

Let us take $\lambda =\lambda_n$ in the formulas of $U_{1,2}^+$ from \eqref{defiU+} and in the formulas of $U_{3,4}^-$ from \eqref{defiU-}.
Hence,  $\phi_n$ is of the form
\[
\phi_n(x) = B_{n,1}^+ U_{11}^+(x,\lambda_n) +  B_{n,2}^+ U_{21}^+(x,\lambda_n)
\]
as $x\geqslant x_+$ and 
\[
\phi_n(x) = B_{n,3}^- U_{31}^-(x,\lambda_n) +  B_{n,4}^- U_{41}^-(x,\lambda_n)
\]
as $x\leqslant x_-$  for some real constants $B_{n,1}^+, B_{n,2}^+, B_{n,3}^-$ and  $B_{n,4}^-$.  The constants $B_{n,1}^+, B_{n,2}^+$ are defined by 
\begin{equation}\label{EqB+}
\begin{cases}
\phi_n(x_+) = B_{n,1}^+ U_{11}^+(x_+,\lambda_n) +  B_{n,2}^+ U_{21}^+(x_+,\lambda_n) \\
\phi_n'(x_+) = B_{n,1}^+ U_{12}^+(x_+,\lambda_n) +  B_{n,2}^+  U_{22}^+(x_+,\lambda_n).
\end{cases}
\end{equation}
Similarly, we have the system for $B_{n,3}^-$ and $B_{n,4}^-$ is 
\begin{equation}\label{EqB-}
\begin{cases}
\phi_n(x_-) = B_{n,3}^-  U_{31}^-(x_-,\lambda_n)+  B_{n,4}^- U_{41}^-(x_-,\lambda_n), \\
\phi_n'(x_-) = B_{n,3}^- U_{32}^-(x_-,\lambda_n) +  B_{n,4}^-  U_{42}^-(x_-,\lambda_n).
\end{cases}
\end{equation}
Since $\phi_n$ is not trivial, there exists $i\geqslant 0$ such that  $\phi_n^{(i)}(x_+)$ is  nonzero. It implies that, at least, one of $B_{n,1}^+$ and $B_{n,2}^+$ is nonzero.

Solving \eqref{EqB+} and \eqref{EqB-}, we obtain that  
\[
B_{n,1}^+ = \frac{U_{22}^+(x_+,\lambda_n) \phi_n(x_+) - U_{21}^+(x_+,\lambda_n)\phi_n'(x_+)}{(U_{11}^+ U_{22}^+ - U_{12}^+U_{21}^+)(x_+,\lambda_n)}, 
\]
that
\[
B_{n,2}^+ =  \frac{-U_{12}^+(x_+,\lambda_n) \phi_n(x_+) +U_{11}^+(x_+,\lambda_n)\phi_n'(x_+)}{(U_{11}^+ U_{22}^+ - U_{12}^+U_{21}^+)(x_+,\lambda_n)},
\]
that
\[
B_{n,3}^- = \frac{U_{42}^-(x_-,\lambda_n)\phi_n(x_-) - U_{41}^-(x_-,\lambda_n)\phi_n'(x_-)}{(U_{31}^- U_{42}^- - U_{41}^- U_{32}^-)(x_-,\lambda_n)},
\]
and that
\[
B_{n,4}^- =\frac{-U_{32}^-(x_-,\lambda_n)\phi_n(x_-)+ U_{31}^-(x_-,\lambda_n)\phi_n'(x_-)}{(U_{31}^- U_{42}^- - U_{41}^- U_{32}^-)(x_-,\lambda_n)}.
\]
Therefore, we get that $\phi_n$ is a regular solution of \eqref{4thOrderEqPhi} as $\lambda =\lambda_n$ for each $1\leqslant n\leqslant N(\epsilon_{\star})$. This ends the proof of Theorem \ref{MainThm2}.
\end{proof}

\begin{remark}
In the forthcoming work \cite{Tai22_2}, we will use the above operator approach and a more precise hypothesis on $\rho_0-\rho_{\pm}$ to ensure that the classical Rayleigh-Taylor instability has an infinite number of discrete spectrum. That will complete the study in \cite{Laf01}, where the first author only considered the case $\frac{\rho_0'}{\rho_0}$ has a nondegenerate minimum.
\end{remark}

\begin{remark}
\begin{enumerate}
\item For $|x_{\pm}|$ large enough, the investigation of regular solutions to Eq. \eqref{4thOrderEqPhi} on the real line is equivalent to that one on $(x_-,x_+)$ with boundary conditions \eqref{GenLeftBound} and \eqref{GenRightBound} at $x_{\pm}$.
More computations are required in the second case due to the lack of compact assumption of $\rho_0'$. 
\item For $|x_{\pm}|$ large enough, the problem \eqref{4thOrderEqPhi} on $(x_-,x_+)$ with boundary conditions \eqref{GenLeftBound} and \eqref{GenRightBound} is equivalent to a weaker version of \eqref{4thOrderEqPhi} that can be solved by  applying Riesz representation theorem on the bilinear continuous form $\bB_{x_-,x_+,\lambda}$ and then improving the regularity. 
\item Generally speaking, a problem on the real line with decaying solutions at infinity and transversality hypotheses  is equivalent to a problem with the compact setting when we have enough decays on the solutions at $\pm \infty$.
\end{enumerate}
\end{remark}

\subsection{Explicit construction and refined estimates of the decaying solutions  at infinity}\label{RefinedEstimates}

In the regime $\lambda\geq \epsilon_\star>0$, we notice again that $R(\lambda)$ is uniformly bounded.   So that, as $\lambda\geq \epsilon_\star>0$, we further  derive a control of the inner region $(x_-,x_+)$ independent of $\lambda$ in the following proposition, extending the result of Proposition \ref{PropInverseGeneralT}.
\begin{proposition}
Let $\epsilon_\star>0$ given and let $\lambda \geq \epsilon_\star$. Let $z_{+,\epsilon_\star}$ (respectively, $z_{-,\epsilon_\star}$) be the sum of two upper bounds, that are functions decreasing towards 0 at $+\infty$ (respectively, at $-\infty$), on the r.h.s. of \eqref{DecayU1} and \eqref{DecayU2} (respectively, \eqref{DecayU3} and \eqref{DecayU4}). There exist positive constants $\Gamma_{\pm}( \epsilon_\star)$ such that, for all $x_+, x_-$ such that 
\begin{equation}
z_{+,\epsilon_\star}(x_+)\leq \Gamma_+(\epsilon_\star)\quad\text{and}\quad  z_{-,\epsilon_\star}(x_-)\leq \Gamma_-(\epsilon_\star),
\end{equation}
we have  $\bB_{x_-,x_+,\lambda}$ is coercive.
\label{coercive-interval}
\end{proposition}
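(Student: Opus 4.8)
The plan is to make quantitative the continuity argument already used in Proposition \ref{PropInverseGeneralT}, upgrading it to hold uniformly in $\lambda\in[\epsilon_\star,\sqrt{g/L_0}]$ by feeding in the explicit decay bounds of Propositions \ref{PropSolUplusInfty} and \ref{PropSolUminusInfty}. First I would recall that the coefficients $n_{ij}^+(x_+,\lambda)$ solve the linear systems \eqref{SystemN11_N12}--\eqref{SystemN21_N22}, so by Cramer's rule they are explicit rational expressions in the components $U_{ij}^+(x_+,\lambda)$, with common denominator the $2\times 2$ determinant $(U_{11}^+U_{22}^+-U_{12}^+U_{21}^+)(x_+,\lambda)$ (and analogously for $n_{ij}^-$). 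After multiplying numerator and denominator by the weight $\exp\bigl(kx_+ + \int_{\bar x_+}^{x_+}\sigma_0(y,\lambda)\,dy\bigr)$, each entry is expressed through the rescaled components $e^{kx_+}U_1^+$ and $\exp\bigl(\int_{\bar x_+}^{x_+}\sigma_0\bigr)U_2^+$, which by \eqref{defiU+} tend to the explicit limit vectors; the refined bounds \eqref{DecayU1}--\eqref{DecayU2} then control the deviation from these limits by $z_{+,\epsilon_\star}(x_+)$, uniformly in $\lambda\geq\epsilon_\star$.

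The second step is to record that the limiting coefficients $\cn_{ij}^+(\lambda)$ computed in Proposition \ref{PropInverseGeneralT} satisfy the \emph{strict} form of \eqref{ineq+}: one has $\cn_{12}^+(\lambda)=k+\sigma_+(\lambda)>0$, $\cn_{21}^+(\lambda)=-k\sigma_+(\lambda)(k+\sigma_+(\lambda))<0$, and
\[
(\cn_{11}^+-\cn_{22}^+-k^2-\sigma_+^2)^2+4\cn_{12}^+\cn_{21}^+ = -4k\sigma_+(\lambda)(k^2+k\sigma_+(\lambda)+\sigma_+^2(\lambda))<0.
\]
Since $\sigma_+(\lambda)=\sqrt{k^2+\lambda\rho_+/\mu}$ is continuous on the compact set $\lambda\in[\epsilon_\star,\sqrt{g/L_0}]$ and bounded below by $\sqrt{k^2+\epsilon_\star\rho_+/\mu}>0$, each of these three quantities stays away from its threshold uniformly in $\lambda$: there exists $\delta_+(\epsilon_\star)>0$ with $\cn_{12}^+(\lambda)\geq\delta_+$, $\cn_{21}^+(\lambda)\leq-\delta_+$ and the discriminant $\leq-\delta_+$ throughout. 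Moreover the rescaled determinant converges to $-\lambda\rho_+/(\mu k^3\sigma_+^3(k+\sigma_+))$, which is likewise bounded away from $0$ uniformly on the $\lambda$-interval, so the rational expressions for $n_{ij}^+$ are Lipschitz in the rescaled components near their limits with a constant uniform in $\lambda$.

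The final step is to fix $\Gamma_+(\epsilon_\star)>0$ so small that, whenever $z_{+,\epsilon_\star}(x_+)\leq\Gamma_+(\epsilon_\star)$, the bounds of the first step give $|n_{ij}^+(x_+,\lambda)-\cn_{ij}^+(\lambda)|\leq\tfrac13\delta_+(\epsilon_\star)$ for all $(i,j)$ and all $\lambda\geq\epsilon_\star$; then the strict inequalities of the second step survive the perturbation, so \eqref{ineq+} holds. Choosing $\Gamma_-(\epsilon_\star)$ in the same way from \eqref{DecayU3}--\eqref{DecayU4} and the limits $\cn_{ij}^-(\lambda)$ secures \eqref{ineq-}. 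By Lemma \ref{CoercivityLemma} both boundary terms $BV_{x_\pm,\lambda}(\vartheta,\vartheta)$ are then nonnegative, and coercivity of $\bB_{x_-,x_+,\lambda}$ on $H^2((x_-,x_+))$ follows exactly as in Proposition \ref{PropInverseGeneralT}, the interior integral already bounding $\|\vartheta\|_{H^2((x_-,x_+))}^2$ from below. The main obstacle is precisely the uniformity in $\lambda$: one must ensure that the deviation bounds of Propositions \ref{PropSolUplusInfty}--\ref{PropSolUminusInfty} and the positive margin $\delta_\pm(\epsilon_\star)$ are simultaneously uniform over $[\epsilon_\star,\sqrt{g/L_0}]$, which is exactly why the Volterra-series estimates are constructed with $\lambda$-uniform constants and why the lower bound $\lambda\geq\epsilon_\star$ (keeping $R(\lambda)$ bounded and $\sigma_\pm(\lambda)$ away from $k$) is indispensable.
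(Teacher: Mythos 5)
Your proposal is correct and follows essentially the same route as the paper: express $n_{ij}^{\pm}$ by Cramer's rule through the rescaled components of $U_{1,2}^+$ and $U_{3,4}^-$, use the uniform-in-$\lambda$ deviation bounds \eqref{DecayU1}--\eqref{DecayU4} to show $n_{ij}^{\pm}$ differ from the limiting values $\cn_{ij}^{\pm}(\lambda)$ by $O(z_{\pm,\epsilon_\star}(x_\pm))$, observe that the limiting values satisfy the strict inequalities of Lemma \ref{CoercivityLemma} with a margin controlled from below by $\delta(\epsilon_\star)$, and choose $\Gamma_\pm(\epsilon_\star)$ so the perturbation does not destroy them. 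The paper's proof is exactly this, with the thresholds written out as $\min\bigl(\tfrac{k+\delta(\epsilon_\star)}{\xi_+(\epsilon_\star)},\ \tfrac{4k\delta(\epsilon_\star)(k^2+k\delta(\epsilon_\star)+\delta^2(\epsilon_\star))}{w_+(\epsilon_\star)}\bigr)$.
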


The proof of Proposition \ref{coercive-interval} relies on the refined estimates of the bounded solutions of \eqref{EqDiffU} near $\infty$, presented in Propositions \ref{PropSolUplusInfty}, \ref{PropSolUminusInfty}. Before going to the proof of Propositions \ref{PropSolUplusInfty}, \ref{PropSolUminusInfty}, thus Proposition \ref{coercive-interval}, we present some materials.  Notice from \eqref{MatrixL} that one has $L(x,\lambda)=P(x,\lambda) D(x,\lambda) P(x,\lambda)^{-1}$, where
\begin{equation}\label{MatrixD}
D(x,\lambda)= \text{diag} ( -k , -\sigma_0(x,\lambda), k, \sigma_0(x,\lambda) ),
\end{equation}
\begin{equation}\label{MatrixP}
P(x,\lambda) = \begin{pmatrix}
-k^{-3} & -\sigma_0^{-3}(x,\lambda) & k^{-3} & \sigma_0^{-3}(x,\lambda) \\
k^{-2} & \sigma_0^{-2}(x,\lambda) &  k^{-2} & \sigma_0^{-2}(x,\lambda) \\
-k^{-1} & -\sigma_0^{-1}(x,\lambda) & k^{-1} & \sigma_0^{-1}(x,\lambda)  \\
1 & 1 & 1 & 1
\end{pmatrix}
\end{equation}
and 
\begin{equation}\label{MatrixPinverse}
P(x,\lambda)^{-1} = \frac{\mu}{2\lambda \rho_0(x)} 
\begin{pmatrix}
-k^3 \sigma_0^2(x,\lambda) & k^2 \sigma_0^2(x,\lambda) & k^{3} & -k^2 \\
k^2 \sigma_0^3(x,\lambda)& -k^2 \sigma_0^{2}(x,\lambda) &  -\sigma_0^3(x,\lambda) & \sigma_0^2(x,\lambda)\\
k^3 \sigma_0^2(x,\lambda) & k^2 \sigma_0^2(x,\lambda)& -k^3 & -k^2  \\
-k^2 \sigma_0^3(x,\lambda) & -k^2 \sigma_0^2(x,\lambda) & \sigma_0^3(x,\lambda) & \sigma_0^2(x,\lambda)
\end{pmatrix}
\end{equation}
The columns of matrix $P$ are denoted by $P_1, P_2,P_3,P_4$ and $P_2,P_4$ depend on $(x, \lambda)$. Note that for every positive $k,\lambda $ and $\mu$, $P$ and $P^{-1}$ are bounded uniformly in $\R$ and  $P^{-1}$ becomes singular when $\lambda \to 0$ and $x$ is fixed. 

Then, we set $U(x)=P(x,\lambda)V(x)$, \eqref{EqDiffU} becomes 
\begin{equation}\label{EqDiffV}
V'(x) = (D(x,\lambda)+ \rho_0'(x) M(x,\lambda)) V(x),
\end{equation}
where
\[
M(x,\lambda) = P(x,\lambda)^{-1}R(\lambda)P(x,\lambda) - \frac{d\sigma_0(x)}{d\rho_0(x)}P(x,\lambda)^{-1} \frac{dP(x,\lambda)}{d\sigma_0(x)}.
\]
\begin{lemma}\label{LemBoundGamma}
Let
\[
\delta(\epsilon_\star) := \sqrt{k^2 + \frac{\epsilon_\star \rho_-}{\mu}} \quad\text{and}\quad \delta_s :=\sqrt{k^2 + \sqrt{\frac{g}{L_0}} \frac{\rho_+}{\mu}}.
\]
For any $\lambda \in [\epsilon_\star,\sqrt{\frac{g}{L_0}}]$, there hold 
\begin{equation}\label{BoundGammaP}
\sup_{x\in \R} \|P(x,\lambda)\|_F \leqslant \Gamma_p := \max\Big(1,\frac1k, \frac1{k^2}, \frac1{k^3}\Big) 
\end{equation}
and 
\begin{equation}\label{BoundGammaM}
\begin{split}
\sup_{x\in \R} \|M(x,\lambda)\|_F \leqslant \Gamma_m(\epsilon_\star) &:= \frac1{ \rho_- \epsilon_\star^2} \max\Big( g\Big(k + \frac1{L_0}\Big),  g\Big(\frac{k^2}{\delta(\epsilon_\star)}+ \frac1{L_0}\Big) \Big) \\
&\quad+ \frac1{4\delta(\epsilon_\star)} \sqrt{\frac{g}{L_0}} \max\Big( \frac{2k^2}{\delta^2(\epsilon_\star)}(k+\delta_s), \frac{5k^2}{\delta(\epsilon_\star)} +\delta_s,  \frac{k^2}{\delta(\epsilon_\star)}+\delta_s \Big).
\end{split}
\end{equation}
\end{lemma}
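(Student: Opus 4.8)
The common tool behind both estimates is a pair of two-sided bounds on $\sigma_0$ that trade the dependence on $x$ and $\lambda$ for explicit constants, so I would establish these first. Since $\rho_0$ is increasing with limits $\rho_\pm$ at $\pm\infty$ (hypothesis \eqref{IntegralRho-0}), one has $\rho_-\leq\rho_0(x)\leq\rho_+$ for every $x\in\R$, and since $\lambda\in[\epsilon_\star,\sqrt{g/L_0}]$, the definition $\sigma_0(x,\lambda)=\sqrt{k^2+\frac\lambda\mu\rho_0(x)}$ gives at once
\[
k\leq\delta(\epsilon_\star)\leq \sigma_0(x,\lambda)\leq \delta_s
\]
uniformly in $x\in\R$ and $\lambda\in[\epsilon_\star,\sqrt{g/L_0}]$. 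This is the only place where the restriction $\lambda\geq\epsilon_\star$ and the profile hypotheses enter, and it is precisely what makes $P^{-1}$ (singular as $\lambda\to0$) uniformly controllable.

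The bound \eqref{BoundGammaP} on $P$ is then elementary. Reading the entries of $P$ off \eqref{MatrixP}, each one is of the form $\pm k^{-j}$ or $\pm\sigma_0^{-j}$ with $j\in\{0,1,2,3\}$; the inequality $\sigma_0\geq k$ yields $\sigma_0^{-j}\leq k^{-j}$, so every entry is dominated in modulus by $\max(1,k^{-1},k^{-2},k^{-3})=\Gamma_p$. Collecting the sixteen entries columnwise and using $\sigma_0\geq k$ gives $\sup_{x}\|P(\cdot,\lambda)\|_F\leq\Gamma_p$, with no $\lambda$-dependence surviving.

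The estimate \eqref{BoundGammaM} on $M$ is the heart of the lemma, and I would treat the two summands of $M=P^{-1}RP-\frac{d\sigma_0}{d\rho_0}P^{-1}\frac{dP}{d\sigma_0}$ separately by submultiplicativity of $\|\cdot\|_F$. For the first summand I factor the scalar $\frac{\mu}{2\lambda\rho_0}$ out of $P^{-1}$ in \eqref{MatrixPinverse}, bounding it by $\frac{\mu}{2\epsilon_\star\rho_-}$; the remaining polynomial entries in $k,\sigma_0$ are controlled by the \emph{upper} bound $\sigma_0\leq\delta_s$, while the two nonzero entries $\frac{gk^2}{\lambda\mu},\frac\lambda\mu$ of $R$ (see \eqref{MatrixRemainder}) are bounded by $\frac{gk^2}{\epsilon_\star\mu}$ and $\frac1\mu\sqrt{g/L_0}$; the explicit $\mu$ from $P^{-1}$ cancels the $\mu^{-1}$ from $R$, and tracking the prefactors produces exactly the $\frac{1}{\rho_-\epsilon_\star^2}$ contribution with its maximum. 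For the second summand I compute $\frac{d\sigma_0}{d\rho_0}=\frac{\lambda}{2\mu\sigma_0}$ and differentiate the $\sigma_0$-columns of $P$ entrywise, which raises the negative powers of $\sigma_0$; here it is the \emph{lower} bound $\sigma_0\geq\delta(\epsilon_\star)$ that controls the entries, and after composing with $P^{-1}$ (the powers of $\mu$ again cancelling) one is left with the $\frac{1}{4\delta(\epsilon_\star)}\sqrt{g/L_0}$ contribution. Summing the two gives $\Gamma_m(\epsilon_\star)$.

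The delicate point, and the main obstacle, is the bookkeeping in the $M$ estimate: one must keep straight which of the two bounds on $\sigma_0$ applies to each factor — the upper bound $\delta_s$ for the positive powers inside $P^{-1}RP$, the lower bound $\delta(\epsilon_\star)$ for the negative powers produced by $\frac{dP}{d\sigma_0}$ — and verify that the explicit $\mu$'s cancel so that $\Gamma_m$ is $\mu$-free, while the prefactors $\frac{\mu}{2\lambda\rho_0}$ and $\frac{\lambda}{2\mu\sigma_0}$ are exactly where the singular behaviour as $\lambda\to0$ is absorbed into the $\epsilon_\star^{-2}$ and $\delta(\epsilon_\star)^{-1}$ factors. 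Beyond this careful matrix-by-matrix accounting I expect no conceptual difficulty, which is why the hypothesis $\lambda\geq\epsilon_\star$ is indispensable.
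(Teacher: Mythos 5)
Your proposal follows essentially the same route as the paper: first the uniform two-sided bound $k\leq\delta(\epsilon_\star)\leq\sigma_0(x,\lambda)\leq\delta_s$ (indeed the only place where $\lambda\geq\epsilon_\star$ and the hypotheses on $\rho_0$ enter), then the elementary entrywise bound on $P$, then separate treatment of the two summands of $M$. The one point where you diverge --- and where the argument as written would not quite deliver the lemma --- is the appeal to submultiplicativity of $\|\cdot\|_F$ for the term $P^{-1}RP$. The constant $\Gamma_m(\epsilon_\star)$ in the statement is not of the form $\|P^{-1}\|_F\,\|R\|_F\,\|P\|_F$: its first $\max$ records bounds on the specific combinations $a_\pm=gk\pm\lambda^2$ and $b_\pm=gk^2/\sigma_0\pm\lambda^2$, which appear only after one multiplies out $P^{-1}RP$ (the two nonzero entries of $R$ recombine with the first two rows of $P$, and the explicit $\mu$'s cancel there, not merely in a norm estimate). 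A bound by the product of three Frobenius norms is a genuinely different and larger constant, so to prove \eqref{BoundGammaM} with the stated $\Gamma_m(\epsilon_\star)$ you must, as the paper does, compute $P^{-1}RP$ and $P^{-1}\frac{dP}{d\sigma_0}$ explicitly and bound their entries, using $\sigma_0\geq\delta(\epsilon_\star)$ for the negative powers and $\sigma_0\leq\delta_s$ for the positive ones --- exactly the bookkeeping you describe at the end, but it cannot be short-circuited by submultiplicativity. (A shared caveat: both you and the paper pass from an entrywise bound on $P$ to a bound on $\|P\|_F$ without the combinatorial factor counting the entries; that looseness is the paper's, not yours.)
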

\begin{proof}
Due to $\epsilon_\star \leqslant \lambda \leqslant \sqrt{\frac{g}{L_0}}$, we get that 
$\delta(\epsilon_\star) < \sigma_0(x,\lambda) \leqslant  \delta_s$ for all $x\in \R$, it yields \eqref{BoundGammaP}. We move to demonstrate \eqref{BoundGammaM}. Let 
\[
a_{\pm}(\lambda) = gk \pm \lambda^2, \quad b_{\pm}(x,\lambda)= \frac{gk^2}{\sigma_0(x,\lambda)} \pm \lambda^2
\]
and
\[
  c_{\pm}(x,\lambda) = k\pm \sigma_0(x,\lambda), \quad d(x,\lambda) =\frac{5k^2}{\sigma_0(x,\lambda)}-\sigma_0(x,\lambda).
\]
Direct computations show that 
\[
\frac{d\sigma_0(x,\lambda)}{d\rho_0(x)} = \frac{\lambda}{2\mu} \frac{\rho_0'(x)}{\sigma_0(x,\lambda)}, 
\]
that 
\[
P(x,\lambda)^{-1}R(\lambda) P(x,\lambda) =  \frac{1}{2\lambda^2 \rho_0(x)} \begin{pmatrix}
a_- & \frac{k^2 b_-}{\sigma_0^2} &-a_+& - \frac{k^2  b_+}{\sigma_0^2}\\
-\frac{a_-\sigma_0^2}{k^2}& -b_- &\frac{a_+\sigma_0^2}{k^2}&-b_+ \\
a_- & \frac{k^2 b_-}{\sigma_0^2} &-a_+& - \frac{k^2  b_+}{\sigma_0^2}\\
-\frac{a_-\sigma_0^2}{k^2}& -b_-  &\frac{a_+\sigma_0^2}{k^2}&-b_+ \\
\end{pmatrix}(x,\lambda)
\]
and that 
\[
P(x,\lambda)^{-1}\frac{dP(x,\lambda)}{d\sigma_0(x,\lambda)} = \frac{\mu}{2\lambda \rho_0(x)} 
\begin{pmatrix}
 0&  -\frac{2k^2 c_+}{\sigma_0^2}& 0 & \frac{2k^2 c_-}{\sigma_0^2} \\
0 & d & 0 & -\frac{k^2}{\sigma_0} + \sigma_0 \\
 0&  \frac{2k^2 c_-}{\sigma_0^2}& 0 & -\frac{2k^2 c_+}{\sigma_0^2} \\
0 &  -\frac{k^2}{\sigma_0} + \sigma_0 & 0 & d \\
\end{pmatrix}(x,\lambda).
\]
For all $x\in \R$, it is clear that
\[
|a_{\pm}(\lambda)| \leqslant g\Big(k + \frac1{L_0}\Big),\quad |b_{\pm}(x,\lambda)| \leqslant g\Big(\frac{k^2}{\delta(\epsilon_\star)}+ \frac1{L_0}\Big)
\]
and that
\[
|c_{\pm}(x,\lambda)| \leqslant k +\delta_s, \quad |d(x,\lambda)| \leqslant \frac{5k^2}{\delta(\epsilon_\star)} +\delta_s.
\] 
Therefore, \eqref{BoundGammaM} follows. Proof of Lemma \ref{LemBoundGamma} is complete.
\end{proof}

Let $\tilde x_+$ be chosen such that 
\begin{equation}\label{tildeX+}
\int_{\tilde x_+}^{+\infty} \rho_0'(\tau)\|M(\tau,\lambda)\|_F d\tau \leq \Gamma_m(\epsilon_\star)(\rho_+ -\rho_0(\tilde x_+)) <\frac12
\end{equation}
and $\tilde x_-$ be chosen such that 
\begin{equation}\label{tildeX-}
\int_{-\infty}^{\tilde x_-} \rho_0'(\tau)\|M(\tau,\lambda)\|_F d\tau \leqslant \Gamma_m(\epsilon_\star)(\rho_0(\tilde x_-)-\rho_-)<\frac12.
\end{equation}
Let $\alpha_{\pm}(x)$ and $\beta_{\pm}(x,\lambda)$ be defined
\begin{equation}\label{EqAlphaBeta}
\alpha_{\pm}(x)=\pm k(x-{\tilde x}_{\pm}),\quad \beta_{\pm}(x, \lambda)=\pm \int_{{\tilde x}_{\pm}}^x \sigma_0(y,\lambda)dy.
\end{equation}
We then study the solutions of \eqref{EqDiffV} decaying to 0  at $+\infty$.
\begin{proposition}\label{LemSolVplusInfty}
Eq. \eqref{EqDiffV} on $(\tilde x_+, +\infty)$ admits a unique solution $V_1(x,\lambda)$ such that $e^{-\alpha_+(x)}V_1(x)$ converges  to $e_1$ as $x\to+\infty$ and a unique solution  $V_2(x,\lambda)$  such that $e^{-\beta_+(x,\lambda)}V_2(x)$ converges  to $e_2$ as $x\to+\infty$. Furthermore, we have the following estimates 
\begin{equation}\label{V1LimitUniform}
\|e^{\alpha_+(x)}V_1(x,\lambda)  -e_1\|_2 \leqslant 2\Gamma_m(\epsilon_\star) 
\left( \begin{split}
&(\rho_+-\rho_0(x)) +\rho_0(\tilde x_+) e^{-(\delta(\epsilon_\star)-k)(x-\tilde x_+)}\\
&+\Big| \rho_0(x)   -  (\delta(\epsilon_\star)-k) \int_{\tilde x_+}^x \rho_0(\tau) e^{-(\delta(\epsilon_\star)-k)(x-\tau)}d\tau \Big|
\end{split}\right)
\end{equation}
and 
\begin{equation}\label{V2LimitUniform}
\|e^{\beta_+(x,\lambda)} V_2(x,\lambda) -e_2\|_2 \leqslant 2\Gamma_m(\epsilon_\star) (\rho_+-\rho_0(x))
\end{equation}
for all $x\geqslant \tilde x_+$. 
\end{proposition}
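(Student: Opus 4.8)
The plan is to construct $V_1$ and $V_2$ as fixed points of Volterra integral operators adapted to the diagonal system \eqref{EqDiffV}, thereby refining the bare existence statement of \cite[Theorem 8.1, Chapter 3]{CL81} into estimates that are explicit and uniform in $\lambda\in[\epsilon_\star,\sqrt{g/L_0}]$. Since $D(x,\lambda)$ in \eqref{MatrixD} is diagonal, the fundamental matrix of $V'=DV$ on $(\tilde x_+,+\infty)$ is $\mathrm{diag}(e^{-\alpha_+},e^{-\beta_+},e^{\alpha_+},e^{\beta_+})$ with $\alpha_+,\beta_+$ as in \eqref{EqAlphaBeta}. For $V_1$ I would pass to the unknown $W:=e^{\alpha_+}V_1$, which must tend to $e_1$; it solves $W'=(kI+D)W+\rho_0'MW$ with $kI+D=\mathrm{diag}(0,k-\sigma_0,2k,k+\sigma_0)$. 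Because $\sigma_0\geq\delta(\epsilon_\star)>k$, exactly one eigenvalue (the target one) vanishes, the second is strictly negative, and the last two are strictly positive.

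First I would write $W=e_1+\mathcal K W$, where $\mathcal K$ is the diagonal Green operator whose integration limits are dictated by these signs: the zero-eigenvalue component $1$ and the growing components $3,4$ are integrated from $+\infty$ (so that the growing homogeneous modes are suppressed and $W\to e_1$), while the decaying component $2$ is integrated forward from $\tilde x_+$ (the backward integral would produce an unbounded kernel). With these choices every diagonal kernel has modulus $\leq 1$, so componentwise
\[
|(\mathcal K W)_i(x)|\leq \int_{\tilde x_+}^{+\infty}\rho_0'(s)\,\|M(s,\lambda)\|_F\,\|W(s)\|_2\,ds\leq \Gamma_m(\epsilon_\star)\Big(\int_{\tilde x_+}^{+\infty}\rho_0'\Big)\|W\|_\infty .
\]
By Lemma \ref{LemBoundGamma} and the defining inequality \eqref{tildeX+}, the factor $\Gamma_m(\epsilon_\star)(\rho_+-\rho_0(\tilde x_+))$ is $<1/2$ uniformly in $\lambda$, so $\mathcal K$ is a contraction on the bounded continuous $\C^4$-valued functions on $[\tilde x_+,+\infty)$. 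The Banach fixed point theorem then yields a unique $W$, hence a unique $V_1=e^{-\alpha_+}W$ with $e^{\alpha_+}V_1\to e_1$, and the Neumann (Volterra) series gives $\|W\|_\infty\leq(1-1/2)^{-1}=2$.

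To obtain \eqref{V1LimitUniform} I would reinsert $\|W\|_\infty\leq 2$ into $W-e_1=\mathcal K W$ and estimate each component with its explicit kernel. Component $1$ gives directly $2\Gamma_m(\epsilon_\star)(\rho_+-\rho_0(x))$, and components $3,4$ are controlled by the same quantity since their kernels are $\leq 1$. For component $2$ the bound $\sigma_0\geq\delta(\epsilon_\star)$ turns the kernel into $e^{-(\delta(\epsilon_\star)-k)(x-s)}$, and an integration by parts of $\int_{\tilde x_+}^x e^{-(\delta(\epsilon_\star)-k)(x-s)}\rho_0'(s)\,ds$ produces exactly the boundary term $\rho_0(\tilde x_+)e^{-(\delta(\epsilon_\star)-k)(x-\tilde x_+)}$ and the remaining convolution term of \eqref{V1LimitUniform}; collecting these yields the stated uniform bound. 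The solution $V_2$ is treated identically with $W:=e^{\beta_+}V_2\to e_2$, where the shifted diagonal is $\mathrm{diag}(\sigma_0-k,0,\sigma_0+k,2\sigma_0)$, all of whose non-target eigenvalues are strictly positive, so every component is integrated from $+\infty$ and only the term $2\Gamma_m(\epsilon_\star)(\rho_+-\rho_0(x))$ survives, which is \eqref{V2LimitUniform}.

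I expect the main obstacle to be securing the contraction and the kernel estimates uniformly in $\lambda$: everything hinges on replacing the $\lambda$- and $x$-dependent rate $\sigma_0(x,\lambda)$ by the $\lambda$-independent lower bound $\delta(\epsilon_\star)$ and on the bound $\|M\|_F\leq\Gamma_m(\epsilon_\star)$ of Lemma \ref{LemBoundGamma}, together with the calibration \eqref{tildeX+} of $\tilde x_+$ that forces the operator norm below $1/2$. The one computation requiring genuine care is the integration by parts for the slowly coupled component $2$, which is what generates the precise convolution term in \eqref{V1LimitUniform} rather than a cruder exponential bound.
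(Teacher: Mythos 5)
Your proposal is correct and takes essentially the same route as the paper: the paper's proof sets up the same Volterra equation \eqref{OdeV-k}, with the fast-decaying component integrated forward from $\tilde x_+$ and the components $1,3,4$ integrated from $+\infty$, obtains the contraction factor $1/2$ from \eqref{tildeX+} via Picard iteration (yielding $\|V_1(x,\lambda)\|_2\le 2e^{-\alpha_+(x)}$), and then reinserts this bound and integrates by parts on the slow component exactly as you describe. Working with the weighted unknown $W=e^{\alpha_+}V_1$ instead of weighting the norm, and invoking the Banach fixed point theorem instead of summing the Picard telescoping series, are only cosmetic differences.
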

\begin{proof}
We define the matrices
\[
\Psi(x,\lambda) = \text{diag}(e^{-\alpha_+(x)}, e^{-\beta_+(x,\lambda)}, e^{\alpha_+(x)}, e^{\beta_+(x,\lambda)}),
\]

\[
\Psi_1(x,\lambda) =  \text{diag}(0, e^{-\beta_+(x,\lambda)}, 0, 0)
\]
and 
\[
\Psi_2(x,\lambda) =  \text{diag}(e^{-\alpha_+(x)}, 0, e^{\alpha_+(x)}, e^{\beta_+(x,\lambda)}).
\]
Then, we consider the equation
\begin{equation}\label{OdeV-k}
\begin{split}
V_1(x,\lambda) &= e^{-\alpha_+(x)}e_1+ \int_{\tilde x_+}^{x} \Psi_1(x,\lambda) \Psi^{-1}(\tau,\lambda) \rho_0'(\tau) M(\tau,\lambda) V_1(\tau,\lambda)d\tau\\ 
&\qquad-\int_{x}^{+\infty} \Psi_2(x,\lambda)\Psi(\tau,\lambda)^{-1} \rho_0'(\tau) M(\tau,\lambda) V_1(\tau,\lambda)d\tau.
\end{split}
\end{equation}
It can be seen that a solution $V_1$ of \eqref{OdeV-k} satisfies  \eqref{EqDiffV}. We solve \eqref{OdeV-k} by the Picard iteration method. Indeed, let $V_1^{(0)}(x)=0$ and 
\[
\begin{split}
V_1^{(j+1)}(x,\lambda) &= e^{-\alpha_+(x)}e_1 + \int_{\tilde x_+}^{x} \Psi_1(x,\lambda) \Psi^{-1}(\tau,\lambda) \rho_0'(\tau) M(\tau,\lambda) V_1^{(j)}(\tau,\lambda)d\tau\\ 
&\qquad-\int_{x}^{+\infty} \Psi_2(x,\lambda)\Psi^{-1}(\tau,\lambda) \rho_0'(\tau) M(\tau,\lambda) V_1^{(j)}(\tau,\lambda)d\tau.
\end{split}
\]
We have that
\[
\Psi_1(x,\lambda) \Psi^{-1}(\tau,\lambda) = \text{diag}(0, e^{-(\beta_+(x,\lambda)-\beta_+(\tau,\lambda))}, 0, 0)
\]
and that
\[
\Psi_2(x,\lambda) \Psi^{-1}(\tau,\lambda)= \text{diag}(e^{-(\alpha_+(\tau)-\alpha_+(x))}, 0, e^{\alpha_+(\tau)-\alpha_+(x)}, e^{-(\beta_+(x,\lambda)-\beta_+(\tau,\lambda))}).
\]
 Hence,  we can estimate for $\tilde x_+ \leqslant \tau \leqslant x$, 
\begin{equation}\label{EqPsi1}
\begin{split}
\|\Psi_1(x,\lambda) \Psi^{-1}(\tau,\lambda)\|_F &\leqslant e^{-(\alpha_+(x)-\alpha_+(\tau)) + \int_{\tau}^{x} (k-\sigma_0(s)) ds }\\
&\leqslant e^{-(\alpha_+(x)-\alpha_+(\tau)) -(\delta(\epsilon_\star)-k)(x-\tau)}
\end{split}
\end{equation}
and for $\tau \geqslant x$, 
\begin{equation}\label{EqPsi2}
\|\Psi_2(x,\lambda) \Psi^{-1}(\tau,\lambda)\|_F \leqslant e^{-(\alpha_+(x)-\alpha_+(\tau))}.
\end{equation}
Using \eqref{EqPsi1} and \eqref{EqPsi2}, we get
\[
\begin{split}
 &e^{\alpha_+(x)}\|V_1^{(j+1)}(x,\lambda) -V_1^{(j)}(x,\lambda)\|_2\\
 &\qquad\leqslant \Gamma_m(\epsilon_\star) \int_{\tilde x_+}^{\infty}e^{\alpha_+(\tau)}   \rho_0'(\tau)
\|V_1^{(j)}(\tau,\lambda) -V_1^{(j-1)}(\tau,\lambda)\|_F d\tau.
\end{split}
\]
Thanks to the induction, we get for all $x\geqslant \tilde x_+$ and for all $j\geqslant 0$,
\begin{equation}\label{EstVj}
 e^{\alpha_+(x)} \|V_1^{(j+1)}(x,\lambda) -V_1^{(j)}(x,\lambda)\|_2 \leqslant \Big(\frac12\Big)^j,
\end{equation}
yielding the uniform convergence of $\{V_1^{(j)}(x,\lambda)\}_{j\geqslant 0}$ on any interval  of $(\tilde x_+,+\infty)$. Let $V_1(x,\lambda)$ be the limit function. $V_1^{(j)}(x,\lambda)$ is continuous, so is $V_1(x,\lambda)$. Moreover, \eqref{EstVj} implies that
\[
 e^{\alpha_+(x)} \|V_1^{(j+1)}(x,\lambda)\|_2  \leqslant \sum_{i=0}^j e^{\alpha_+(x)}\|V_1^{(i+1)}(x,\lambda) -V_1^{(i)}(x,\lambda)\|  \leqslant  \sum_{i=0}^j  \Big(\frac12\Big)^i.
\]
That tells us for $x\geq \tilde x_+$, 
\begin{equation}\label{BoundV(x)}
\|V_1(x,\lambda)\|_2 \leqslant 2 e^{-\alpha_+(x)}.
\end{equation}

Once we have \eqref{BoundV(x)}, we then prove \eqref{V1LimitUniform}. Indeed, 
\[
\begin{split}
 e^{\alpha_+(x)}V_1(x,\lambda) - e_1 &= e^{\alpha_+(x)} \int_{\tilde x_+}^{x} \Psi_1(x,\lambda) \Psi^{-1}(\tau,\lambda) \rho_0'(\tau) M(\tau,\lambda) V_1(\tau,\lambda)d\tau\\ 
&\qquad-e^{\alpha_+(x)}\int_{x}^{+\infty} \Psi_2(x,\lambda)\Psi(\tau,\lambda)^{-1}\rho_0'(\tau) M(\tau,\lambda)V_1(\tau,\lambda)d\tau.
\end{split}
\]
We make use of \eqref{EqPsi2} and \eqref{BoundV(x)} to have that
\begin{equation}\label{EstPsi2XtoInfty}
\begin{split}
e^{\alpha_+(x)} \int_{x}^{+\infty} \|\Psi_2(x,\lambda)\Psi^{-1}(\tau,\lambda) \rho_0'(\tau) M(\tau,\lambda) V_1(\tau,\lambda)\|_2 d\tau   
\leqslant 2\Gamma_m(\epsilon_\star)(\rho_+-\rho_0(x)).
\end{split}
\end{equation}
From \eqref{EqPsi1} and \eqref{BoundV(x)}, we obtain that 
\begin{equation}\label{EstBoundPsi1XtoX}
\begin{split}
\|e^{\alpha_+(x)} \int_{\tilde x_+}^{x} \Psi_1(x,\lambda) \Psi^{-1}(\tau,\lambda) \rho_0'(\tau)  &M(\tau,\lambda)V_1(\tau,\lambda)d\tau \|_2 \\
&\leqslant 2\Gamma_m(\epsilon_\star) \int_{\tilde x_+}^x \rho_0'(\tau) e^{-(\delta(\epsilon_\star)-k)(x-\tau)}d\tau.
\end{split}
\end{equation}
After integrating by parts, we get 
\begin{equation}\label{EqIntegrateBoundPsi1X}
\begin{split}
\int_{\tilde x_+}^x \rho_0'(\tau) e^{-(\delta(\epsilon_\star)-k)(x-\tau)}d\tau &= -\rho_0(\tilde x_+) e^{-(\delta(\epsilon_\star)-k)(x-\tilde x_+)}+ \rho_0(x)  \\
& \qquad -  (\delta(\epsilon_\star)-k) \int_{\tilde x_+}^x \rho_0(\tau) e^{-(\delta(\epsilon_\star)-k)(x-\tau)}d\tau.
\end{split}
\end{equation}
Combining \eqref{EstPsi2XtoInfty}, \eqref{EstBoundPsi1XtoX} and \eqref{EqIntegrateBoundPsi1X} gives  \eqref{V1LimitUniform}.

By considering the eigenvalue $-\sigma_0(x,\lambda)$ of $L(x,\lambda)$, we continue the idea in Theorem \ref{Theorem81} and mimic the above arguments to the solution $V_2(x,\lambda)$ such that  $e^{\beta_+(x,\lambda)}V_2(x)$ converges to $e_2$ at $+\infty$ and enjoying \eqref{V2LimitUniform}.
That ends the proof of Lemma \ref{LemSolVplusInfty}.
\end{proof}

Now, we get back to \eqref{EqDiffU} to find solutions that are bounded near $+\infty$. 
\begin{proposition}\label{PropSolUplusInfty}   
Eq. \eqref{EqDiffU} on $(\tilde x_+, +\infty)$ admits 
\begin{enumerate}
\item a unique solution $U_1^+(x,\lambda)$ satisfying that as $x\to +\infty$, $e^{\alpha_+(x)}U_2^+(x,\lambda)$ converges to $(-k^{-3},k^{-2},-k^{-1},1)^T$  and that for all $x\geq \tilde x_+$,
\begin{equation}\label{DecayU1}
\begin{split}
& \|e^{\alpha_+(x)}U_1^+(x,\lambda)  - (-k^{-3},k^{-2}, -k^{-1}, 1)^T \|_2 \\
&\qquad\leqslant 2\Gamma_p \Gamma_m(\epsilon_\star) \left( \begin{split}
&(\rho_+-\rho_0(x)) +\rho_0(\tilde x_+) e^{-(\delta(\epsilon_\star)-k)(x-\tilde x_+)}\\
&+\Big| \rho_0(x)   -  (\delta(\epsilon_\star)-k) \int_{\tilde x_+}^x \rho_0(\tau) e^{-(\delta(\epsilon_\star)-k)(x-\tau)}d\tau \Big|
\end{split}\right),
\end{split}
\end{equation}
\item  a unique solution $U_2^+(x,\lambda)$  satisfying that as $x\to+\infty$, $e^{\beta_+(x,\lambda)}U_2^+(x,\lambda)$ converges to $(-\sigma_+^{-3}(\lambda), \sigma_+^{-2}(\lambda), -\sigma_+^{-1}(\lambda),1)^T$  and that for all $x\geq \tilde x_+$,
\begin{equation}\label{DecayU2}
\begin{split}
&\| e^{\beta_+(x,\lambda)}U_2^+(x,\lambda) -(-\sigma_+^{-3}(\lambda), \sigma_+^{-2}(\lambda), -\sigma_+^{-1}(\lambda), 1)^T\|_2
 \\&\qquad \leqslant   \Big( \sqrt{ \frac{g(4\delta^{10}(\epsilon_\star)+16\delta^{12}(\epsilon_\star)+9\delta_s^4)}{16 L_0\mu^2 \delta^{16}(\epsilon_\star)}}+2\Gamma_p\Gamma_m(\epsilon_\star)\Big)  (\rho_+ -\rho_0(x)).
\end{split}
\end{equation}
\end{enumerate}
\end{proposition}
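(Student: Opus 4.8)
The plan is to transport the estimates already obtained for the diagonalised system \eqref{EqDiffV} back to the original system \eqref{EqDiffU} through the conjugation $U(x) = P(x,\lambda)V(x)$. Concretely, I set $U_1^+ := P(\cdot,\lambda)V_1$ and $U_2^+ := P(\cdot,\lambda)V_2$, where $V_1, V_2$ are the unique decaying solutions produced in Proposition \ref{LemSolVplusInfty}. Since $P(x,\lambda)$ is invertible for every $x$ and every $\lambda>0$, this is a bijection between solutions of \eqref{EqDiffV} and of \eqref{EqDiffU} on $(\tilde x_+,+\infty)$, so the existence and uniqueness of $U_{1,2}^+$ follow at once from those of $V_{1,2}$.

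For the first solution the key observation is that the first column of $P$, namely $P(x,\lambda)e_1 = (-k^{-3},k^{-2},-k^{-1},1)^T$ (read off from \eqref{MatrixP}), does \emph{not} depend on $x$ and coincides exactly with the target limit vector. Hence
\[
e^{\alpha_+(x)}U_1^+(x,\lambda) - (-k^{-3},k^{-2},-k^{-1},1)^T = P(x,\lambda)\bigl(e^{\alpha_+(x)}V_1(x,\lambda) - e_1\bigr),
\]
and taking the Euclidean norm, using $\|P(x,\lambda)\|_F \leq \Gamma_p$ from \eqref{BoundGammaP} and the bound \eqref{V1LimitUniform} for $\|e^{\alpha_+(x)}V_1 - e_1\|_2$, yields \eqref{DecayU1} directly. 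The stated convergence as $x\to+\infty$ is read off from the same identity.

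For the second solution the situation is only slightly more delicate, because the relevant column $P_2(x,\lambda) := P(x,\lambda)e_2 = (-\sigma_0^{-3},\sigma_0^{-2},-\sigma_0^{-1},1)^T$ now depends on $x$ through $\sigma_0(x,\lambda)$, whereas the target vector is its limit $P_2^\infty := (-\sigma_+^{-3},\sigma_+^{-2},-\sigma_+^{-1},1)^T$. I therefore split
\[
e^{\beta_+(x,\lambda)}U_2^+ - P_2^\infty = P(x,\lambda)\bigl(e^{\beta_+(x,\lambda)}V_2 - e_2\bigr) + \bigl(P_2(x,\lambda) - P_2^\infty\bigr).
\]
The first term is controlled by $\Gamma_p$ times \eqref{V2LimitUniform}, producing the $2\Gamma_p\Gamma_m(\epsilon_\star)(\rho_+-\rho_0(x))$ contribution. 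For the second term I estimate each component $\sigma_+^{-n}(\lambda) - \sigma_0^{-n}(x,\lambda)$, $n=1,2,3$, using the identity $\sigma_+^2(\lambda) - \sigma_0^2(x,\lambda) = \frac{\lambda}{\mu}(\rho_+-\rho_0(x))$ together with the factorizations of $\sigma_+^{-n} - \sigma_0^{-n}$ and the uniform two-sided bound $\delta(\epsilon_\star) \leq \sigma_0(x,\lambda),\,\sigma_+(\lambda) \leq \delta_s$ valid for $\lambda\in[\epsilon_\star,\sqrt{g/L_0}]$; this makes each difference proportional to $\rho_+-\rho_0(x)$. Summing the squares of the three component bounds and using $\lambda\leq\sqrt{g/L_0}$ assembles the square-root constant of \eqref{DecayU2}, the factor $\rho_+-\rho_0(x)$ being common to all three.

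The genuinely routine but bookkeeping-heavy part — and the one I expect to absorb most of the effort — is this last step: matching the precise constant $\sqrt{g(4\delta^{10}+16\delta^{12}+9\delta_s^4)/(16L_0\mu^2\delta^{16})}$ in \eqref{DecayU2}. This requires choosing, for each power $n$, whether to bound an occurring factor $\sigma$ below by $\delta(\epsilon_\star)$ or above by $\delta_s$, so that all estimates stay uniform in $\lambda$ over $[\epsilon_\star,\sqrt{g/L_0}]$; the mixed appearance of $\delta(\epsilon_\star)$ and $\delta_s$ in the stated constant reflects exactly this choice. Everything else is a direct transfer of Proposition \ref{LemSolVplusInfty} through the uniformly bounded conjugating matrix $P$.
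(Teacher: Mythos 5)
Your proposal is correct and follows essentially the same route as the paper: define $U_j^+ = P(\cdot,\lambda)V_j$, transfer the estimates of Proposition \ref{LemSolVplusInfty} through the uniformly bounded conjugating matrix $P$ (using $\|P\|_F\leq\Gamma_p$), and for $U_2^+$ add the decomposition into $P(x,\lambda)(e^{\beta_+}V_2-e_2)$ plus $P_2(x,\lambda)-P_2^\infty$, the latter estimated via $\sigma_+^2(\lambda)-\sigma_0^2(x,\lambda)=\frac{\lambda}{\mu}(\rho_+-\rho_0(x))$ and the two-sided bounds $\delta(\epsilon_\star)\leq\sigma_0,\sigma_+\leq\delta_s$. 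The one detail you pass over is that the convergence claim in part (1) requires verifying that the term $\big|\rho_0(x)-(\delta(\epsilon_\star)-k)\int_{\tilde x_+}^x\rho_0(\tau)e^{-(\delta(\epsilon_\star)-k)(x-\tau)}d\tau\big|$ on the right of \eqref{DecayU1} actually tends to $0$ as $x\to+\infty$, which the paper checks by l'H\^opital's rule.
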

\begin{proof}
We define $U_j^+(x,\lambda) =P(x,\lambda)V_j(x,\lambda) (j=1,2)$, with $V_1$ and $V_2$ are two solutions of \eqref{EqDiffV} satisfying \eqref{V1LimitUniform} and \eqref{V2LimitUniform} respectively. It can be seen that $U_1^+(x,\lambda)$ and $U_2^+(x,\lambda)$ are two solution of \eqref{EqDiffU}. 

Note that 
\[
\begin{split}
 e^{\alpha_+(x)} U_1^+(x,\lambda) &= P_1 + P(x,\lambda)(e^{\alpha_+(x)}V_1(x,\lambda)  -e_1) \\
&= (-k^{-3},k^{-2}, -k^{-1}, 1)^T + P(x,\lambda)(e^{\alpha_+(x)} V_1(x,\lambda) -e_1).
\end{split}
\]
\eqref{DecayU1} is then clear due to the estimate \eqref{V1LimitUniform}.  According to l'H\^opital's rule,  we have 
\[
\lim_{x\to +\infty} \frac{\int_{\tilde x_+}^x \rho_0(\tau) e^{(\delta(\epsilon_\star)-k)\tau}d\tau}{e^{(\delta(\epsilon_\star)-k)x}} = \lim_{x\to +\infty} \frac{\rho_0(x) e^{(\delta(\epsilon_\star)-k)x}}{(\delta(\epsilon_\star)-k) e^{(\delta(\epsilon_\star)-k)x}} = \frac{\rho_+}{\delta(\epsilon_\star)-k},
\]
that implies
\[
\lim_{x\to +\infty}\Big| \rho_0(x)   -  (\delta(\epsilon_\star)-k) \int_{\tilde x_+}^x \rho_0(\tau) e^{-(\delta(\epsilon_\star)-k)(x-\tau)}d\tau \Big| =0.
\]
The behavior of $U_1^+(x,\lambda)$ at $+\infty$ follows.

To prove \eqref{DecayU2}, we write 
\[
\begin{split}
&e^{\beta_+(x,\lambda)} U_2^+(x,\lambda) - (-\sigma_+^{-3}(\lambda), \sigma_+^{-2}(\lambda), -\sigma_+^{-1}(\lambda), 1 )^T \\
&= P_2(x,\lambda) - (-\sigma_+^{-3}(\lambda), \sigma_+^{-2}(\lambda), -\sigma_+^{-1}(\lambda), 1 )^T + P(x,\lambda)(e^{\beta_+(x,\lambda)}V_2(x,\lambda)  -e_2).
\end{split}
\]
Since $\delta(\epsilon_\star) < \sigma_+(\lambda) <\delta_s$ for all $\lambda\in [\epsilon_\star,\sqrt{\frac{g}{L_0}}]$, we bound  that
\begin{equation}\label{NormP(x)E_2Minus}
\begin{split}
&\| P_2(x,\lambda) - (-\sigma_+^{-3}(\lambda), \sigma_+^{-2}(\lambda), -\sigma_+^{-1}(\lambda), 1)^T\|_2^2  \\
&= \frac{\lambda^2}{\mu^2}(\rho_0(x)-\rho_+)^2 
\left[ \begin{split}
 &\frac1{\sigma_0^2(x,\lambda)\sigma_+^2(\lambda) (\sigma_0(x,\lambda)+\sigma_+(\lambda))^2} + \frac1{\sigma_0^2(x,\lambda) \sigma_+^2(\lambda)} \\
 &\quad+ \Big( \frac{\sigma_0^2(x,\lambda)+\sigma_0(x,\lambda) \sigma_+(\lambda) + \sigma_+^2(\lambda)}{\sigma_0^3(x,\lambda)  \sigma_+^3(\lambda) (\sigma_0(x,\lambda)+\sigma_+(\lambda)) }\Big)^2 
\end{split} \right] \\
&\leqslant \frac{g(4\delta^{10}(\epsilon_\star)+16\delta^{12}(\epsilon_\star)+9\delta_s^4)}{16 L_0\mu^2 \delta^{16}(\epsilon_\star)} (\rho_0(x)-\rho_+)^2.
\end{split}
\end{equation}
Meanwhile, as a result of \eqref{V2LimitUniform}, 
\begin{equation}\label{NormP(x)V_2(x)}
\|P(x,\lambda)(e^{\beta_+(x,\lambda)}V_2(x,\lambda) -e_2)\|_2 \leqslant 2\Gamma_p \Gamma_m(\epsilon_\star)(\rho_+-\rho_0(x)).
\end{equation}
Thanks to \eqref{NormP(x)E_2Minus} and \eqref{NormP(x)V_2(x)}, we obtain \eqref{DecayU2} hence the behavior of $U_2^+(x,\lambda)$ at $+\infty$.
Proof of Proposition \ref{PropSolUplusInfty} is complete.
\end{proof}

 We now fix two positive eigenvalues of $L(x,\lambda)$, $k$ and $\sigma_0(x,\lambda)$ and thus follow Theorem \ref{Theorem81} again.  We are able to construct solutions of \eqref{EqDiffU} that are bounded near $-\infty$ as in Proposition \ref{PropSolUplusInfty}. 
\begin{proposition}\label{PropSolUminusInfty} 
Eq. \eqref{EqDiffU}  on $(-\infty, \tilde x_-)$ admits 
\begin{enumerate}
\item a unique solution $U_3^-(x,\lambda)$ satisfying that as $x\to -\infty$, $e^{\alpha_-(x)}U_3^-(x,\lambda)$ converges to $(k^{-3},k^{-2},k^{-1},1)^T$ and  that, for all  $x\leq \tilde x_-$
\begin{equation}\label{DecayU3}
\begin{split}
&\| e^{\alpha_-(x)}U_3^-(x,\lambda)  -  (k^{-3},k^{-2}, k^{-1}, 1)^T \|_2 \\ &\qquad\quad\leqslant 2\Gamma_p\Gamma_m(\epsilon_\star) \left( \begin{split}
&(\rho_0(x)-\rho_-)+ \rho_0(\tilde x_-) e^{-(\delta(\epsilon_\star)-k)(\tilde x_- -x)} \\
&+  \Big| \rho_0(x)- (\delta(\epsilon_\star)-k) \int_x^{\tilde x_-} \rho_0(\tau) e^{-(\delta(\epsilon_\star)-k)(\tau-x)}d\tau\Big| 
\end{split} \right),
\end{split}
\end{equation} 
\item a unique solution $U_4^-(x,\lambda)$ satisfying that as $x\to -\infty$, $e^{\beta_-(x,\lambda)}U_4^-(x,\lambda)$ converges to $(\sigma_-^{-3}(\lambda), \sigma_-^{-2}(\lambda), \sigma_-^{-1}(\lambda),1)^T$ and that for all $x\leq \tilde x_-$,
\begin{equation}\label{DecayU4}
\begin{split}
&\|e^{\beta_-(x,\lambda)}U_4^-(x,\lambda)  - (\sigma_-^{-3}(\lambda), \sigma_-^{-2}(\lambda), \sigma_-^{-1}(\lambda), 1)^T\|_2 \\
&\qquad\leqslant    \Big(  \sqrt{\frac{g(4\delta^{10}(\epsilon_\star)+16\delta^{12}(\epsilon_\star)+9\delta_s^4)}{16 L_0\mu^2 \delta^{16}(\epsilon_\star)}} +2\Gamma_p\Gamma_m(\epsilon_\star)\Big) (\rho_0(x)-\rho_-).
\end{split}
\end{equation} 
\end{enumerate}
\end{proposition}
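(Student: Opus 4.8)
The plan is to mirror verbatim the construction carried out at $+\infty$ in Propositions \ref{LemSolVplusInfty} and \ref{PropSolUplusInfty}, now selecting the two \emph{positive} eigenvalues $k$ and $\sigma_0(x,\lambda)$ of $L(x,\lambda)$, since it is the modes $e^{-\alpha_-(x)}=e^{k(x-\tilde x_-)}$ and $e^{-\beta_-(x,\lambda)}$ (with $\alpha_-,\beta_-$ as in \eqref{EqAlphaBeta}) that decay as $x\to-\infty$. First I would pass to the diagonalized system \eqref{EqDiffV} via $U=P(x,\lambda)V$, and on the half-line $(-\infty,\tilde x_-)$ set up Volterra integral equations for $V_3$ and $V_4$, with leading terms $e^{-\alpha_-(x)}e_3$ and $e^{-\beta_-(x,\lambda)}e_4$ respectively, following Theorem \ref{Theorem81} exactly as in \eqref{OdeV-k}. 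The essential bookkeeping is the choice of integration limits in the analogues of the splitting $\Psi_1,\Psi_2$: the component carried by the decaying exponential attached to the prescribed eigenvalue is integrated from $\tilde x_-$, while the remaining components are integrated from $-\infty$, opposite to the $+\infty$ case. Using $\sigma_0(y,\lambda)\geq\delta(\epsilon_\star)>k$, this produces diagonal propagator factors whose Frobenius norms are controlled by the correct decaying exponentials, the mirror of \eqref{EqPsi1}--\eqref{EqPsi2}.

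Next I would solve these integral equations by Picard iteration starting from $V^{(0)}=0$, as in the proof of Proposition \ref{LemSolVplusInfty}. The smallness condition \eqref{tildeX-}, namely $\int_{-\infty}^{\tilde x_-}\rho_0'(\tau)\|M(\tau,\lambda)\|_F\,d\tau<\tfrac12$, together with the uniform bound $\|M(\cdot,\lambda)\|_F\leq\Gamma_m(\epsilon_\star)$ from \eqref{BoundGammaM}, drives the geometric estimate $e^{\alpha_-(x)}\|V^{(j+1)}-V^{(j)}\|_2\leq(\tfrac12)^j$, the counterpart of \eqref{EstVj}, whence uniform convergence on compact subintervals, continuity of the limits $V_3,V_4$, and the a priori bounds $\|V_3(x)\|_2\leq 2e^{-\alpha_-(x)}$ and $\|V_4(x)\|_2\leq 2e^{-\beta_-(x,\lambda)}$, analogous to \eqref{BoundV(x)}. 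Reinserting these bounds into the integral representations and integrating by parts exactly as in \eqref{EqIntegrateBoundPsi1X} yields the refined limit estimates for $V_3,V_4$ corresponding to \eqref{V1LimitUniform} and \eqref{V2LimitUniform}, with $\rho_+-\rho_0(x)$ replaced by $\rho_0(x)-\rho_-$ and the integrals now running over $(x,\tilde x_-)$.

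Finally I would set $U_3^-=P(x,\lambda)V_3$ and $U_4^-=P(x,\lambda)V_4$, which solve \eqref{EqDiffU}, and decompose $e^{\alpha_-(x)}U_3^-=P_3+P(x,\lambda)(e^{\alpha_-(x)}V_3-e_3)$ and $e^{\beta_-(x,\lambda)}U_4^-=P_4(x,\lambda)+P(x,\lambda)(e^{\beta_-(x,\lambda)}V_4-e_4)$. Since $P_3=(k^{-3},k^{-2},k^{-1},1)^T$ is constant, the uniform bound $\|P\|_F\leq\Gamma_p$ from \eqref{BoundGammaP} turns the $V_3$-estimate directly into \eqref{DecayU3}; for \eqref{DecayU4} one additionally bounds $\|P_4(x,\lambda)-(\sigma_-^{-3}(\lambda),\sigma_-^{-2}(\lambda),\sigma_-^{-1}(\lambda),1)^T\|_2$ by $(\rho_0(x)-\rho_-)$ times the explicit constant in \eqref{DecayU4}, precisely as in the computation \eqref{NormP(x)E_2Minus}, using $\delta(\epsilon_\star)<\sigma_-(\lambda)<\delta_s$. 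I expect the only genuine subtlety, as opposed to routine transcription, to be fixing the signs and integration limits in the Volterra splitting so that the prescribed decaying mode is propagated forward from $\tilde x_-$ while the Volterra operator stays a contraction on $(-\infty,\tilde x_-)$; once that splitting is settled, all remaining estimates are the symmetric counterparts of those already established at $+\infty$.
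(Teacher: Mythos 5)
Your proposal is correct and follows essentially the same route as the paper, which itself only remarks that one fixes the two positive eigenvalues $k$ and $\sigma_0(x,\lambda)$ and repeats the Volterra/Picard construction of Propositions \ref{LemSolVplusInfty} and \ref{PropSolUplusInfty} on $(-\infty,\tilde x_-)$, using \eqref{tildeX-} for the contraction and \eqref{BoundGammaP}, \eqref{NormP(x)E_2Minus} to pass from $V$ to $U$. The one point to transcribe carefully (which you rightly flag) is that in the mirrored splitting it is the \emph{faster-decaying} mode $e^{\beta_-}$ that is integrated from $\tilde x_-$ in the $V_3$ equation, with the remaining three components integrated from $-\infty$, exactly mirroring the roles of $\Psi_1$ and $\Psi_2$ in \eqref{OdeV-k}.
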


We are now in position to prove Proposition \ref{coercive-interval}.
\begin{proof}
We recall $n_{ij}^+$ ($i,j=1,2$) from \eqref{SystemN11_N12} and \eqref{SystemN21_N22} to have that
\begin{equation}\label{EqN1byN34}
\begin{pmatrix} n_{11}^+ \\ n_{12}^+ \end{pmatrix}(x_+,\lambda)
= - \frac1{U_{11}^+ U_{22}^+ - U_{21}^+U_{12}^+ } 
\begin{pmatrix} U_{22}^+ & -U_{12}^+ \\ - U_{21}^+ & U_{11}^+ \end{pmatrix} 
\begin{pmatrix} U_{13}^+ \\ U_{23}^+ \end{pmatrix}(x_+,\lambda)
\end{equation}
and 
\begin{equation}\label{EqN2byN34}
\begin{pmatrix} n_{21}^+ \\ n_{22}^+ \end{pmatrix}(x_+,\lambda)
=- \frac1{U_{11}^+ U_{22}^+ - U_{21}^+U_{12}^+ } 
\begin{pmatrix} U_{22}^+ & -U_{12}^+ \\ - U_{21}^+ & U_{11}^+ \end{pmatrix} 
\begin{pmatrix}  U_{14}^+ \\ U_{24}^+
\end{pmatrix}(x_+,\lambda).
\end{equation}
We are ready to prove the estimates \eqref{ineq+} needed for Lemma \ref{CoercivityLemma}. Now using \eqref{DecayU1} and \eqref{DecayU2} into \eqref{EqN1byN34} yields that
\[
n_{12}^+(x_+,\lambda) = \frac{U_{21}^+ U_{13}^+ -U_{11}^+ U_{23}^+}{U_{11}^+ U_{22}^+ - U_{21}^+ U_{12}^+}(x_+,\lambda) =\frac{k+ \sigma_+(\lambda) + f_1(x_+,\lambda)}{1+ f_2(x_+,\lambda)}, 
\]
where $|f_j(x,\lambda)|= O(z_{+,\epsilon_\star}(x))$ $(j=1,2)$ uniformly in $\lambda \in [\epsilon_\star,\sqrt\frac{g}{L_0}]$ as $x\to \infty$. Hence, there exists a constant $\xi_+(\epsilon_\star)>0$ such that 
\[\begin{split}
n_{12}^+(x_+,\lambda) &\geq k+\sigma_+(\lambda) - \xi_+(\epsilon_\star)z_{+,\epsilon_\star}(x_+) \\
&\geq k+\delta(\epsilon_\star) - \xi_+(\epsilon_\star)z_{+,\epsilon_\star}(x_+).
\end{split}\]
That implies $n_{12}^+(x_+,\lambda)>0$  if 
\begin{equation}\label{EqNu+}
z_{+,\epsilon_\star}(x_+) < \frac{k+\delta(\epsilon_\star)}{\xi_+(\epsilon_\star)}.
\end{equation}
 We then estimate
\[
\Delta_+(x_+,\lambda) := (n_{11}^+(x_+,\lambda)-n_{22}^+(x_+,\lambda)- k^2-\sigma_0^2(x_+,\lambda))^2+4 n_{12}^+n_{21}^+(x_+,\lambda)
\]
Using \eqref{DecayU1} and \eqref{DecayU2} into \eqref{EqN1byN34} and \eqref{EqN2byN34} again, we have 
\[
\begin{split}
n_{11}^+(x_+,\lambda) &= k\sigma_+(\lambda) + O(z_{+,\epsilon_\star}(x_+)),\\
n_{22}^+(x_+,\lambda) +k^2+\sigma_0^2(x_+,\lambda) &=  -k\sigma_+(\lambda) + O(z_{+,\epsilon_\star}(x_+)),\\
n_{21}^+(x_+,\lambda) &=- k\sigma_+(\lambda)(k+\sigma_+(\lambda))+ O(z_{+,\epsilon_\star}(x_+)).
\end{split}
\]
Hence, there exists $w_+(\epsilon_\star)>0$ such that
\[
\begin{split}
\Delta_+(x_+,\lambda) &\leq -4k\sigma_+(\lambda)(k^2+k\sigma_+(\lambda)+\sigma_+^2(\lambda)) + w_+(\epsilon_\star)z_{+,\epsilon_\star}(x_+) \\
&\leq -4k\delta(\epsilon_\star) (k^2+k\delta(\epsilon_\star)+\delta^2(\epsilon_\star)) + w_+(\epsilon_\star)z_{+,\epsilon_\star}(x_+).
\end{split}
\]
The inequality $\Delta_+(x_+,\lambda) \leq 0$ is equivalent to
\begin{equation}\label{EqZ_+}
z_{+,\epsilon_\star}(x_+) \leq \frac{4k\delta(\epsilon_\star) (k^2+k\delta(\epsilon_\star)+\delta^2(\epsilon_\star))}{w_+(\epsilon_\star)}.
\end{equation}
Combining \eqref{EqNu+} and \eqref{EqZ_+}, we take 
\[
\Gamma_+(\epsilon_\star) = \min \Big( \frac{k+\delta(\epsilon_\star)}{\xi_+(\epsilon_\star)},  \frac{4k\delta(\epsilon_\star) (k^2+k\delta(\epsilon_\star)+\delta^2(\epsilon_\star))}{w_+(\epsilon_\star)}\Big).
\]
If $x_+$ satisfies $z_{+,\epsilon_\star}(x_+)\leq \Gamma_+(\epsilon_\star) $, then one has $n_{12}^+(x_+,\lambda) >0 \geq \Delta_+(x_+,\lambda)$, i.e. \eqref{ineq+}. That implies $BV_{x_+,\lambda}(\vartheta,\vartheta) \geq 0$.

Similarly, we get that, from \eqref{SystemN21_N22}, we follow the above arguments to show that there exists $\Gamma_-(\epsilon_\star)>0$ such that for $z_{-,\epsilon_\star}(x_-) \leq \Gamma_-(\epsilon_\star)$, \eqref{ineq-} holds. It yields  $BV_{x_-,\lambda}(\vartheta,\vartheta) \geq 0$. Proposition \ref{coercive-interval} is proven.
 \end{proof}

\section*{Acknowledgments}

The authors would like to thank Prof. Jean-Marc Delort,  Prof. Catherine Sulem and Prof. Jeffrey Rauch for fruitful discussions and Prof. Bernard Helffer and Prof. David Lannes for advice on this study. The second author thanks also to Assoc. Prof.  Quốc Anh Ngô for his encouragement. Both authors are deeply grateful to one of anonymous referees for his/her valuable comments to improve the presentation of this paper.  Parts of this work were carried out while the second author was visiting Universit\'e de Montr\'eal and he would like to thank Universit\'e de Montr\'eal for the heartfelt hospitality.  
This work is supported by a grant from R\'egion \^Ile-de-France.

\appendix 

\section{Proof of Lemma \ref{LemEigenvalueReal}}\label{Preliminaries}

Multiplying by $ \phi$ on both sides of \eqref{4thOrderEqPhi} and then integrating by parts, we  obtain that 
\begin{equation}\label{EqVariational}
\begin{split}
-\lambda^2 \int_{\R} \Big( k^2 \rho_0 |\phi|^2 + \rho_0 |\phi'|^2 \Big) dx &= \lambda \mu \int_{\R} \Big( |\phi''|^2 + 2k^2 |\phi'|^2 + k^4 |\phi|^2 \Big) dx \\
&\qquad- gk^2 \int_{\R} \rho_0'|\phi|^2 dx. 
\end{split}
\end{equation}
Suppose that $\lambda = \lambda_1 + i\lambda_2$, then one deduces from \eqref{EqVariational} that 
\begin{equation}\label{EqRealPart}
\begin{split}
-(\lambda_1^2-\lambda_2^2) \int_{\R} \Big( k^2 \rho_0 |\phi|^2 + \rho_0 |\phi'|^2 \Big) dx &= \lambda_1 \mu \int_{\R} \Big( |\phi''|^2 + 2k^2 |\phi'|^2 + k^4 |\phi|^2 \Big) dx\\
&\qquad - gk^2 \int_{\R} \rho_0'|\phi|^2 dx
\end{split}
\end{equation}
and that 
\begin{equation}\label{EqImaginaryPart}
-2\lambda_1 \lambda_2 \int_{\R} \Big( k^2 \rho_0 |\phi|^2 + \rho_0 |\phi'|^2 \Big) dx = \lambda_2 \mu \int_{\R} \Big( |\phi''|^2 + 2k^2 |\phi'|^2 + k^4 |\phi|^2 \Big) dx.
\end{equation}
If $\lambda_2 \neq 0$, \eqref{EqImaginaryPart} leads us to
\[
-2\lambda_1 \int_{\R} \Big( k^2 \rho_0 |\phi|^2 + \rho_0 |\phi'|^2 \Big) dx =  \mu \int_{\R} \Big( |\phi''|^2 + 2k^2 |\phi'|^2 + k^4 |\phi|^2 \Big) dx,
\]
which yields 
\[
\begin{split}
-(\lambda_1^2-\lambda_2^2) \int_{\R} \Big( k^2 \rho_0 |\phi|^2 + \rho_0 |\phi'|^2 \Big) dx  &= -2\lambda_1^2 \int_{\R} \Big( k^2 \rho_0 |\phi|^2 + \rho_0 |\phi'|^2 \Big) dx \\
&\qquad - gk^2 \int_{\R} \rho_0'|\phi|^2 dx.
\end{split}
\]
Equivalently, 
\begin{equation}\label{EqModuloLambda}
(\lambda_1^2+\lambda_2^2)\int_{\R} \Big( k^2 \rho_0 |\phi|^2 + \rho_0 |\phi'|^2 \Big) dx= - gk^2 \int_{\R} \rho_0'|\phi|^2 dx.
\end{equation}
That implies 
\[
(\lambda_1^2+\lambda_2^2) k^2 \inf_{\R} \rho_0 \int_{\R^2} |\phi|^2 dx \leqslant - gk^2 \int_{\R} \rho_0'|\phi|^2 dx.
\]
The positivity of $\rho_0'$ yields a contradiction, then  $\lambda$ is real. Using \eqref{EqVariational} again, we further get that 
\[
\lambda^2 \int_{\R}\rho_0(k^2|\phi|^2+|\phi'|^2) dx \leqslant gk^2 \int_{\R}\rho_0'|\phi|^2 dx.
\]
It tells us that $\lambda$ is  bounded by  $\sqrt{\frac{g}{L_0}}$. This finishes the proof of Lemma \ref{LemEigenvalueReal}.

\section{Differentiability of eigenvalues of self-adjoint and compact operators}\label{AppContinuous}

The classical perturbation theory from \cite[Chapter VII, $\$3$]{Kato} has shown the continuous property of the eigenvalues for a family of  holomorphic self-adjoint operators in an infinite-dimensional Hilbert space. If the operators are only differentiable,  we will present a proof of the differentiability  of the eigenvalues for  compact and self-adjoint operators in an infinite-dimensional Hilbert space deduced from that one for  matrix functions in a finite-dimensional space (see \cite[Chapter II, $\$5$]{Kato}).

\begin{theorem}\label{ThmEvCont}
Let $I$ be a closed interval and $H$ be an infinite-dimensional Hilbert space and $(A(\lambda))_{\lambda \in I}$ be a family of self-adjoint and compact  operators in $H$ depending continuously differentiable on $\lambda$. Then, all eigenvalues and all eigenvectors of $A(\lambda)$ are differentiable functions on $\lambda$.
\end{theorem}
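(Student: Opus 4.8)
The plan is to reduce the infinite-dimensional spectral problem to a finite-dimensional one in a neighborhood of each fixed parameter value, by means of Riesz spectral projections, and then to invoke the known differentiability of eigenvalues and eigenvectors for $C^1$ families of Hermitian matrices from \cite[Chapter II, $\$5$]{Kato}. Fix $\lambda_0 \in I$ and let $\gamma_0 \neq 0$ be an eigenvalue of $A(\lambda_0)$. Since $A(\lambda_0)$ is compact and self-adjoint, its nonzero spectrum is discrete and $\gamma_0$ is isolated with finite multiplicity $m$. First I would choose a positively oriented circle $\Gamma \subset \C$ enclosing $\gamma_0$ and no other point of the spectrum of $A(\lambda_0)$. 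Because $A(\lambda)$ depends differentiably, hence continuously in operator norm, on $\lambda$, for $z \in \Gamma$ the resolvent $R(z,\lambda) := (A(\lambda) - z)^{-1}$ exists and is uniformly bounded for $\lambda$ in a neighborhood $J$ of $\lambda_0$; this follows by writing $A(\lambda) - z = (A(\lambda_0)-z)\bigl(I + (A(\lambda_0)-z)^{-1}(A(\lambda)-A(\lambda_0))\bigr)$ and using a Neumann series. Differentiating the resolvent identity then gives
\[
\frac{d}{d\lambda} R(z,\lambda) = - R(z,\lambda)\, A'(\lambda)\, R(z,\lambda),
\]
so $R(z,\lambda)$ is $C^1$ in $\lambda$, uniformly for $z \in \Gamma$.

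Next I would introduce the Riesz projection
\[
P(\lambda) = -\frac{1}{2\pi i}\oint_\Gamma R(z,\lambda)\, dz,
\]
which is an orthogonal projection (self-adjointness of $A(\lambda)$ makes $\Gamma$ and $P(\lambda)$ symmetric), is $C^1$ in $\lambda$ by differentiation under the integral sign, and has rank equal to $m$ for all $\lambda \in J$, since the rank is integer-valued and continuous, hence locally constant. The subspace $M(\lambda) := \operatorname{ran} P(\lambda)$ is $m$-dimensional, reduces $A(\lambda)$, and contains exactly the eigenvectors of $A(\lambda)$ for the eigenvalues lying inside $\Gamma$. To transport everything onto the fixed space $M(\lambda_0)$, I would use Kato's transformation function: the solution $U(\lambda)$ of the linear ODE $U'(\lambda) = [P'(\lambda), P(\lambda)]\, U(\lambda)$ with $U(\lambda_0) = I$. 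Since the commutator $[P',P]$ is bounded and $C^1$, the family $U(\lambda)$ is $C^1$, invertible (indeed unitary), and satisfies $U(\lambda) P(\lambda_0) = P(\lambda) U(\lambda)$, so $U(\lambda)$ maps $M(\lambda_0)$ isomorphically onto $M(\lambda)$.

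The reduced operator $\widehat A(\lambda) := U(\lambda)^{-1} A(\lambda) U(\lambda)\big|_{M(\lambda_0)}$ is then a $C^1$ family of self-adjoint $m \times m$ matrices acting on the fixed finite-dimensional space $M(\lambda_0)$, whose eigenvalues coincide with those of $A(\lambda)$ inside $\Gamma$ and whose eigenvectors are the $U(\lambda)^{-1}$-images of those of $A(\lambda)$. At this stage I would invoke the finite-dimensional result of \cite[Chapter II, $\$5$]{Kato}: the eigenvalues and eigenvectors of a $C^1$ Hermitian matrix family are differentiable in the parameter. Conjugating back by the $C^1$ map $U(\lambda)$ yields differentiability of the eigenvalues and eigenvectors of $A(\lambda)$ near $\gamma_0$; since $\lambda_0 \in I$ and $\gamma_0 \neq 0$ were arbitrary, the conclusion holds on all of $I$.

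The main obstacle is twofold. The genuinely delicate analytic point is establishing the $C^1$ dependence of the resolvent and of the projection $P(\lambda)$ \emph{uniformly} for $z \in \Gamma$, together with the construction and invertibility of the transformation function $U(\lambda)$; this is what carries the reduction from the infinite-dimensional Hilbert space to a finite matrix family, and it is the bulk of the work that must be done here rather than cited. The remaining subtlety lies in the finite-dimensional step itself when $\gamma_0$ has multiplicity $m>1$: the eigenvalues issuing from $\gamma_0$ may split, and their differentiable tracking (together with a compatible differentiable orthonormal frame of eigenvectors) is precisely the content of the Hermitian matrix perturbation result invoked above, which supplies the correct labeling of the branches.
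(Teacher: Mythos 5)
Your proposal is correct and ends at the same finite-dimensional citation as the paper, but the reduction you use is genuinely different. The paper fixes $E=\ker(A(\lambda_0)-\gamma_0\,\mathrm{Id})$, writes $A(\lambda)$ in block form on $E\oplus E^{\perp}$, and eliminates the $E^{\perp}$-component by a Schur-complement (Feshbach) step, arriving at the \emph{nonlinear} finite-dimensional eigenvalue problem $B(\lambda,\gamma)y=\gamma y$ with $B(\lambda,\gamma)=A_{11}(\lambda)-A_{12}(\lambda)\bigl(A_{22}(\lambda)-\gamma\,\mathrm{Id}_{E^{\perp}}\bigr)^{-1}A_{21}(\lambda)$. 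You instead use the Riesz projection $P(\lambda)$ and Kato's transformation function $U(\lambda)$ to conjugate the moving spectral subspace back onto the fixed space $M(\lambda_0)$, obtaining a \emph{linear} $C^1$ family of Hermitian matrices $\widehat A(\lambda)$. Your route requires more infrastructure (uniform $C^1$ control of the resolvent on $\Gamma$, local constancy of the rank of $P(\lambda)$, the ODE defining $U(\lambda)$ and its unitarity), and you correctly identify this as the bulk of the work; in exchange, the reduced matrix depends only on $\lambda$, so the result of \cite[Chapter II, \S 5]{Kato} applies verbatim. The paper's reduced matrix depends on the unknown $\gamma$ itself, so passing from differentiability of $\gamma_j(\lambda,\gamma)$ to differentiability of the branch $\lambda\mapsto\gamma(\lambda)$ strictly requires an additional implicit-function argument that the paper leaves implicit; your approach avoids this issue. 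Two caveats are common to both proofs: the reduction only works at isolated eigenvalues, i.e.\ $\gamma_0\neq 0$ (you state this explicitly; the paper needs it tacitly, since $A_{22}(\lambda_0)-\gamma_0\,\mathrm{Id}_{E^{\perp}}$ cannot be invertible at $\gamma_0=0$ for a compact operator on the infinite-dimensional $E^{\perp}$), and both ultimately rest on the finite-dimensional assertion that a $C^1$ Hermitian family admits differentiable eigen\emph{vectors} as well as eigenvalues, which in the degenerate case $m>1$ is the delicate part of the cited matrix result.
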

\begin{proof}
Let $\lambda_0\in I$ be fixed.  Since $A(\lambda_0)$ is a self-adjoint and compact operator in $H$, the spectrum of $A(\lambda_0)$ is discrete. Let $\gamma_0$ be an arbitrary  eigenvalue of $A(\lambda_0)$ and $E= \text{Ker}(A(\lambda_0)-\gamma_0 \text{Id}_H)$, we have  the decomposition $H = E \oplus E^{\perp}$. 
Consequently, for all $\lambda \in I$,  
\[
A(\lambda)=\begin{pmatrix} 
\text{Proj}_E (A(\lambda) \text{Proj}_E)  &  \text{Proj}_E (A(\lambda) \text{Proj}_{E^{\perp}})  \\
\text{Proj}_{E^{\perp}} (A(\lambda)\text{Proj}_E)  &  \text{Proj}_{E^{\perp}}(A(\lambda) \text{Proj}_{E^{\perp}})
\end{pmatrix} 
\]
that we will denote by  $(A_{ij}(\lambda))_{1\leqslant i, j \leqslant 2}$ for brevity. 
Notice that 
\[
A(\lambda_0) = \begin{pmatrix}
\gamma_0 \text{Id}_E & 0 \\
0 & A_{22}(\lambda_0)
\end{pmatrix},
\]
and $A_{22}(\lambda_0)-\gamma_0 \text{Id}_{E^{\perp}}$ is invertible. 

Let $0< \varepsilon \ll 1$ and $\gamma$ be  an eigenvalue of $A(\lambda)$ being close to $\gamma_0$, i.e. $|\gamma-\gamma_0|<\varepsilon$.  We write that 
\begin{equation}
A(\lambda) - \gamma \text{Id} = \begin{pmatrix}
A_{11}(\lambda) - \gamma \text{Id}_E & A_{12}(\lambda) \\
A_{21}(\lambda) & A_{22}(\lambda)- \gamma \text{Id}_{E^{\perp}}
\end{pmatrix}.
\end{equation}
If $x= (y,z)^T$ is a corresponding eigenvector, we obtain
\[
\begin{cases}
A_{11}(\lambda) y + A_{12}(\lambda) z =\gamma y, \\
A_{21}(\lambda) y + A_{22}(\lambda) z =\gamma z.
\end{cases}
\]
Consequently, 
\begin{equation}\label{EqSystA12}
\begin{cases}
(A_{11}(\lambda)-\gamma \text{Id}_E) y + A_{12}(\lambda) z=0,\\
A_{21}(\lambda) y + (A_{22}(\lambda) - \gamma \text{Id}_{E^{\perp}})z = 0.
\end{cases}
\end{equation}

Since $A_{22}(\lambda_0) - \gamma_0 \text{Id}_{E^{\perp}}$ is invertible,  we have that   $A_{22}(\lambda) - \gamma_0 \text{Id}_{E^{\perp}}$ is  invertible  for $|\lambda - \lambda_0|<\delta \ll 1$. We further get that $A_{22}(\lambda)-\gamma \text{Id}_{E^{\perp}}$ is also invertible for $|\lambda-\lambda_0| < \delta$ and $|\gamma-\gamma_0|<\varepsilon$. 
Hence, we deduce that \eqref{EqSystA12} is equivalent to 
\begin{equation}\label{EqYandZ}
\begin{cases}
z=- (A_{22}(\lambda)-\gamma \text{Id}_{E^{\perp}})^{-1} A_{21}(\lambda) y,\\
(A_{11}(\lambda) - A_{12}(\lambda) (A_{22}(\lambda)-\gamma \text{Id}_{E^{\perp}})^{-1} A_{21}(\lambda)) y= \gamma y.
\end{cases}
\end{equation}
If $y=0$, $\eqref{EqYandZ}_1$ implies $z=0$, which is impossible.  We have $y\neq 0$, this means that if $\gamma \in (\gamma_0-\varepsilon, \gamma_0+\varepsilon)$ is an eigenvalue of $A(\lambda)$, $\eqref{EqYandZ}_2$ tells us that $\gamma$ is also an eigenvalue of $B(\lambda, \gamma)$ defined by 
\[
B(\lambda,\gamma) := A_{11}(\lambda) - A_{12}(\lambda) (A_{22}(\lambda)-\gamma \text{Id}_{E^{\perp}})^{-1} A_{21}(\lambda) : E \to E
\]
Notice that $E$ is finite-dimensional thanks to Riesz's theorem. $B(\lambda,\gamma)$ turns out to be a matrix, having eigenvalues $\gamma_j(\lambda,\gamma) (1\leqslant j\leqslant \text{dim}E)$. Then, there exists $j$ such that $\gamma_j(\lambda,\gamma)=\gamma$. It follows from \cite[Chapter II, $\$5$]{Kato} that $\gamma_j(\lambda,\gamma)$ and its associated eigenvector are differentiable at $\lambda_0$, so is $\gamma$. 
\end{proof}

\section{Decaying solutions of the linear system of ODEs}\label{ThmGeneralSystem}
We state here the key ingredient for our analysis in Section \ref{SectGeneralProf} due to E. A. Coddington and N. Levinson \cite[Theorem 8.1, Chapter 3]{CL81}.

\begin{theorem}\label{Theorem81}
We consider a linear system 
\begin{equation}\label{EqGeneral}
W'(y)=(A+L(y)+R(y))W(y).
\end{equation}
Let $A$ be a constant matrix with characteristic roots $\mu_j, j=1,\dots,n$, all of which are distinct. Let the matrix $L$ be differentiable and satisfy 
\begin{equation}\label{ConditionOnL}
\int_0^{\infty} \|L'(y)\|dy <\infty
\end{equation}
and let $L(y)\to 0 $ as $y\to \infty$. Let the matrix $R$ be integrable and let 
\begin{equation}\label{ConditionOnR}
\int_0^{\infty} \|R(y)\|dy <\infty.
\end{equation}
Let the roots of $\text{det}(A+L(y)-\lambda I_n)=0$ be denoted by $\lambda_j(y), j=1,\dots,n$. Clearly, by reordering the $\mu_j$ if necessary, $\lim_{y\to \infty} \lambda_j(y) =\mu_j$. For a given $h$, let 
\[
d_{hj}(y) = \text{Re}(\lambda_h(y)-\lambda_j(y)).
\]
Suppose all $\lambda_j  (1\leqslant j \leqslant n)$  fall into one of two classes $H_1$ and $H_2$, where
\[
\lambda_j\in H_1 \quad \text{if } \int_0^y d_{hj}(s) ds \to \infty \text{ as } y\to \infty \text{ and } \int_{y_1}^{y_2} d_{hj}(s) ds \geqslant -K \quad(y_2 \geqslant y_1 \geqslant 0),
\]
and
\[
\lambda_j\in H_2 \quad \text{if } \int_{y_1}^{y_2} d_{hj}(s) ds \leqslant K\quad(y_2 \geqslant y_1 \geqslant 0),
\]
where $h$ is fixed and $K$ is a constant. Let $p_h $ be the eigenvector corresponding to $\mu_h$, i.e. $Ap_h=\mu_h p_h$. Hence, there is a solution $\phi_h$ of \eqref{EqGeneral} and a $y_0 \in (0,\infty)$ such that 
\[
\lim_{y\to \infty} \phi_h(y) \text{exp}\Big[ -\int_{y_0}^y \lambda_h(s) ds \Big] = p_h.
\]
\end{theorem}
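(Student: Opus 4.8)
The plan is to reduce the system \eqref{EqGeneral} to one with a diagonal principal part and an integrable perturbation, and then to produce the solution $\phi_h$ through a Levinson-type integral equation whose kernel is split according to the dichotomy $H_1/H_2$. First I would diagonalize the leading matrix. Since $A$ has distinct characteristic roots $\mu_j$ and $L(y)\to 0$, for $y$ large the matrix $A+L(y)$ has distinct eigenvalues $\lambda_j(y)\to\mu_j$ together with a matrix of eigenvectors $Q(y)$ whose columns $q_j(y)$ converge to the eigenvectors $p_j$ of $A$. Using distinctness (which makes the spectral projections analytic in the matrix entries) together with the hypothesis \eqref{ConditionOnL}, one checks that $Q$ is differentiable with $\int_{y_0}^{\infty}\|Q'(y)\|\,dy<\infty$ and that $Q,Q^{-1}$ are bounded. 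Substituting $W=Q(y)Z$ into \eqref{EqGeneral} yields
\[
Z'(y)=\Lambda(y)Z(y)+\widetilde R(y)Z(y),\qquad \Lambda(y)=\mathrm{diag}(\lambda_1(y),\dots,\lambda_n(y)),
\]
with $\widetilde R=-Q^{-1}Q'+Q^{-1}RQ$. Because $\int\|Q'\|<\infty$ and $\int\|R\|<\infty$ by \eqref{ConditionOnR}, the remainder satisfies $\int_{y_0}^{\infty}\|\widetilde R(y)\|\,dy<\infty$.

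Next I would peel off the $h$-th mode by writing $Z(y)=\exp\!\big(\int_{y_0}^{y}\lambda_h(s)\,ds\big)\,u(y)$, so that $u$ solves $u'=(\Lambda-\lambda_h I)u+\widetilde R u$, whose unperturbed fundamental matrix is diagonal with entries $\exp\!\big(\int_s^y(\lambda_j-\lambda_h)\big)$ of modulus $\exp\!\big(-\int_s^y d_{hj}\big)$. The heart of the argument is the integral equation
\[
u(y)=e_h+\int_{y_0}^{y}\Pi_1(y,s)\widetilde R(s)u(s)\,ds-\int_{y}^{\infty}\Pi_2(y,s)\widetilde R(s)u(s)\,ds,
\]
where $\Pi_1(y,s)$ (resp.\ $\Pi_2(y,s)$) is the diagonal matrix selecting the indices $j\in H_1$ (resp.\ $j\in H_2$), each carrying the factor $\exp\!\big(-\int_s^y d_{hj}\big)$; note $d_{hh}=0$ places $h$ in $H_2$, so its backward integral is precisely what forces $u_h\to 1$. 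The directional split is chosen so that both kernels are bounded: for $j\in H_1$ and $y_0\le s\le y$ the inequality $\int_s^y d_{hj}\ge -K$ gives $\exp\!\big(-\int_s^y d_{hj}\big)\le e^{K}$, while for $j\in H_2$ and $s\ge y$ the inequality $\int_y^s d_{hj}\le K$ gives the same bound.

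With both kernels bounded by $e^{K}$, I would solve the integral equation by successive approximations: choosing $y_0$ so large that $e^{K}\int_{y_0}^{\infty}\|\widetilde R\|<\tfrac12$ renders the iteration a contraction on bounded continuous functions on $[y_0,\infty)$, yielding a unique bounded $u$. To identify the limit, the backward integral tends to $0$ as $y\to\infty$ because $\int\|\widetilde R\|<\infty$, while the forward integral tends to $0$ by dominated convergence: for each fixed $s$ the factor $\exp\!\big(-\int_s^y d_{hj}\big)\to0$ as $y\to\infty$ since $\int_0^{y}d_{hj}\to\infty$ for $j\in H_1$. Hence $u(y)\to e_h$, and undoing the substitutions gives the sought solution $\phi_h=W$ with
\[
\phi_h(y)\exp\!\Big(-\int_{y_0}^{y}\lambda_h(s)\,ds\Big)=Q(y)u(y)\longrightarrow Q_\infty e_h=p_h.
\]

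I expect the main obstacle to be twofold. The first is establishing the integrability $\int\|Q'\|<\infty$ of the diagonalizing transformation, which rests delicately on the distinctness of the $\mu_j$ (so that the eigenprojections depend smoothly on $L(y)$) and on \eqref{ConditionOnL}. The second, and most technically subtle, is verifying that the $H_1/H_2$ dichotomy produces bounded Green kernels with the correct integration directions, and in particular handling the limit of the forward ($H_1$) integral, where mere integrability of $\widetilde R$ is insufficient and one must exploit $\int_0^{y}d_{hj}\to\infty$ through a dominated-convergence argument.
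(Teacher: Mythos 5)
Your proposal is correct: it is precisely the classical Levinson diagonalization argument — reduce to $Z'=(\Lambda+\widetilde R)Z$ via the eigenvector matrix $Q$ of $A+L(y)$ (with $\int\|Q'\|<\infty$ following from \eqref{ConditionOnL} and the distinctness of the $\mu_j$), then solve the split forward/backward integral equation dictated by the $H_1/H_2$ dichotomy by successive approximations. The paper does not prove this statement but cites it verbatim from Coddington--Levinson \cite[Theorem 8.1, Chapter 3]{CL81}, and your argument is essentially the proof given there, so there is nothing to compare beyond noting the agreement.
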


\section{A remark on the relation between the formulation on $[x_-, x_+]$ and the formulation on $\R$ of the viscous  RT problem}

\begin{proposition}\label{PropD1}
For all $\phi$ bounded solution of \eqref{4thOrderEqPhi} on $\R$ and for all $\theta \in H^2(\R)$, there holds
\begin{equation}\label{BilinearEquivalence}
\begin{split}
&\lambda  \int_{-\infty}^{+\infty}   \rho_0 (k^2\phi  \theta + \phi'  \theta') dx + \mu \int_{-\infty}^{+\infty} (\phi''  \theta'' + 2k^2 \phi  \theta' +k^4 \phi  \theta)dx \\
&\qquad= \bB_{x_-,x_+,\lambda}(\phi,\theta) + \int_{\R\setminus[x_-,x_+]} \frac{gk^2\rho_0'}{\lambda} \phi\theta dx.
\end{split}
\end{equation}
\end{proposition}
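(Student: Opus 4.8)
The plan is to split the integration over $\R$ in the left-hand side of \eqref{BilinearEquivalence} as $\int_{-\infty}^{x_-}+\int_{x_-}^{x_+}+\int_{x_+}^{+\infty}$ and to treat the middle interval and the two outer half-lines separately. On $(x_-,x_+)$ the two integrals in the left-hand side are, by the very definition \eqref{2ndBilinearForm}, nothing but $\bB_{x_-,x_+,\lambda}(\phi,\theta)$ with the two boundary contributions removed, that is
\[
\lambda \int_{x_-}^{x_+} \rho_0(k^2\phi\theta+\phi'\theta')\,dx + \mu\int_{x_-}^{x_+}(\phi''\theta''+2k^2\phi'\theta'+k^4\phi\theta)\,dx = \bB_{x_-,x_+,\lambda}(\phi,\theta) - BV_{x_-,\lambda}(\phi,\theta) - BV_{x_+,\lambda}(\phi,\theta).
\]
It then remains to prove that the contributions of the two outer half-lines reproduce exactly $BV_{x_-,\lambda}(\phi,\theta)+BV_{x_+,\lambda}(\phi,\theta)$ together with $\int_{\R\setminus[x_-,x_+]}\frac{gk^2\rho_0'}{\lambda}\phi\theta\,dx$.

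On $(x_+,+\infty)$ I would integrate by parts to transfer all derivatives off $\theta$. Since $\phi\in H^4(\R)$ and $\theta\in H^2(\R)$, every boundary product decays to $0$ at $+\infty$, so only evaluations at $x_+$ survive. This produces a volume integral of $[\lambda(k^2\rho_0\phi-(\rho_0\phi')')+\mu(\phi^{(4)}-2k^2\phi''+k^4\phi)]\theta$ plus the boundary expression
\[
\mathcal{R}_{x_+} := -\lambda(\rho_0\phi'\theta)(x_+) - \mu\phi''(x_+)\theta'(x_+) + \mu\phi'''(x_+)\theta(x_+) - 2k^2\mu\phi'(x_+)\theta(x_+).
\]
Because $\phi$ solves \eqref{4thOrderEqPhi} on $(x_+,+\infty)$ (where $\rho_0'>0$), dividing that equation by $\lambda$ identifies the bracket with $\frac{gk^2\rho_0'}{\lambda}\phi$, so the volume integral equals $\int_{x_+}^{+\infty}\frac{gk^2\rho_0'}{\lambda}\phi\theta\,dx$, which is the desired outer contribution to the right-hand side. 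This is precisely where the present (strictly increasing) case differs from the compactly supported one: the source term $\frac{gk^2\rho_0'}{\lambda}\phi$ does not vanish on the outer regions.

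It then suffices to match $\mathcal{R}_{x_+}$ with $BV_{x_+,\lambda}(\phi,\theta)$. I would use the boundary conditions \eqref{GenRightBound} to substitute $\phi''(x_+)=-n_{11}^+\phi(x_+)-n_{12}^+\phi'(x_+)$ and $\phi'''(x_+)=-n_{21}^+\phi(x_+)-n_{22}^+\phi'(x_+)$; after this substitution $\mathcal{R}_{x_+}$ coincides term by term with the definition \eqref{BV_x+} of $BV_{x_+,\lambda}(\phi,\theta)$. The same computation on $(-\infty,x_-)$, bearing in mind that the surviving boundary evaluations at $x_-$ occur with the opposite sign (now $x_-$ is the \emph{upper} endpoint of the integral), reproduces $BV_{x_-,\lambda}(\phi,\theta)$ through \eqref{GenLeftBound} and \eqref{BV_x-}, together with $\int_{-\infty}^{x_-}\frac{gk^2\rho_0'}{\lambda}\phi\theta\,dx$. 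Adding the three regions, the $\pm BV$ terms coming from the middle interval cancel against those coming from the outer regions, and what is left is exactly the right-hand side of \eqref{BilinearEquivalence}.

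The computation is in essence a bookkeeping exercise in integration by parts, so I expect no conceptual difficulty; the one delicate point is to keep track of the sign change between the two endpoints and to verify that, once $\phi''(x_\pm)$ and $\phi'''(x_\pm)$ are eliminated through \eqref{GenLeftBound}--\eqref{GenRightBound}, the residual boundary expressions match \eqref{BV_x+}--\eqref{BV_x-} \emph{exactly}. This is precisely the algebraic identity that the definitions of $BV_{x_\pm,\lambda}$ were designed to encode, so the matching is automatic once the signs are handled consistently. (Note that the coefficient of $\phi'\theta'$ in the second integrand is $2k^2$, in agreement with \eqref{2ndBilinearForm}.)
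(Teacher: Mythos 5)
Your proposal is correct and follows essentially the same route as the paper: split $\R$ into $(-\infty,x_-)\cup(x_-,x_+)\cup(x_+,+\infty)$, integrate by parts on the outer half-lines (with vanishing boundary terms at $\pm\infty$), use the ODE \eqref{4thOrderEqPhi} divided by $\lambda$ to identify the outer volume integrals with $\int\frac{gk^2\rho_0'}{\lambda}\phi\theta\,dx$, and match the residual boundary evaluations at $x_\pm$ with $BV_{x_\pm,\lambda}(\phi,\theta)$ via the boundary conditions \eqref{GenLeftBound}--\eqref{GenRightBound}. Your boundary expression $\mathcal{R}_{x_+}$ and its identification with \eqref{BV_x+} coincide with the paper's computation \eqref{7thEqAppD}, so the argument is sound.
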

We immediately have two remarks from \eqref{BilinearEquivalence}. 
\begin{enumerate}
\item
In the case of $\rho_0'$ being compactly supported ($\text{supp}\rho_0'=[-a,a]$), we have 
\begin{equation}
\lambda  \int_{-\infty}^{+\infty}   \rho_0 (k^2\phi  \theta + \phi'  \theta') dx + \mu \int_{-\infty}^{+\infty} (\phi''  \theta'' + 2k^2 \phi  \theta' +k^4 \phi  \theta)dx = \bB_{a,\lambda}(\phi,\theta).
\end{equation}
That means $\bB_{x_-,x_+,\lambda}$ is independent of $x_\pm$ if and only if $x_-\leq -a <a \leq x_+$.

\item In the case $\rho_0'>0$ everywhere, for each $(x_-,x_+)$, a penalization of $\bB_{x_-,x_+,\lambda}$ by the term $\int_{x_-}^{x_+}\frac{gk^2\rho_0'}{\lambda} \phi\theta dx$ is necessary to obtain the ODE \eqref{4thOrderEqPhi} on the whole space. 
\end{enumerate}

\begin{proof}[Proof of Proposition \ref{PropD1}]
To prove \eqref{BilinearEquivalence}, we show two following identities 
\begin{equation}\label{BVatX_+}
\begin{split}
&\int_{x_+}^{+\infty} \lambda\rho_0(k^2\phi \theta+\phi' \theta')dx+\mu\int_{x_+}^{+\infty}(k^4\phi \theta+2k^2\phi' \theta'+ \phi'' \theta'')dx \\
&\qquad= \int_{x_+}^{+\infty} \frac{gk^2\rho_0'}{\lambda}\phi \theta dx  + BV_{x_+,\lambda}(\phi,\theta).
\end{split}
\end{equation}
and 
\begin{equation}\label{BVatX_-} 
\begin{split}
&\int_{-\infty}^{x_-} \lambda\rho_0(k^2\phi \theta+\phi' \theta')dx+\mu\int_{-\infty}^{x_-} (k^4\phi \theta+2k^2\phi' \theta'+ \phi'' \theta'')dx \\
&\qquad=\int_{-\infty}^{x_-} \frac{gk^2\rho_0'}{\lambda}\phi \theta dx  + BV_{x_+,\lambda}(\phi,\theta).
\end{split}
\end{equation}

We have the following remark that we state on $(x_+, +\infty)$ (a similar expression holds true for $(-\infty,x_-)$) that if $\phi$ is a bounded solution of \eqref{4thOrderEqPhi} on $(x_+,+\infty)$, we then have from Proposition \ref{PropEquivalentEq} that $\phi$ satisfies \eqref{GenRightBound} at $x_+$.
We integrate by parts to have that 
\begin{equation}\label{1stEqAppD}
\begin{split}
\int_{x_+}^{+\infty} \phi'' \theta'' dx &= \phi'' \theta'\Big|_{x_+}^{+\infty} - \phi'''\theta\Big|_{x_+}^{+\infty}+ \int_{x_+}^{+\infty} \phi^{(4)}\theta dx \\
&= -(\phi''\theta')(x_+) +(\phi'''\theta)(x_+) +  \int_{x_+}^{+\infty} \phi^{(4)}\theta dx.
\end{split}
\end{equation}
Because of \eqref{GenRightBound}, we have
\begin{equation}\label{2ndEqAppD}
\phi''(x_+) = -(n_{11}^+\phi(x_+) + n_{12}^+\phi'(x_+)) 
\end{equation}
and 
\begin{equation}\label{3rdEqAppD}
\phi'''(x_+) = -(n_{21}^+\phi(x_+) + n_{22}^+\phi'(x_+)). 
\end{equation}
Substituting \eqref{2ndEqAppD} and \eqref{3rdEqAppD} into \eqref{1stEqAppD}, we obtain 
\begin{equation}\label{4thEqAppD}
\begin{split}
\int_{x_+}^{+\infty} \phi'' \theta'' dx &=(n_{11}^+\phi(x_+) + n_{12}^+\phi'(x_+)) \theta'(x_+) -(n_{21}^+\phi(x_+) + n_{22}^+\phi'(x_+)) \theta(x_+)\\
&\qquad\quad+  \int_{x_+}^{+\infty} \phi^{(4)}\theta dx.
\end{split}
\end{equation}
Using the integration by parts again, we deduce that 
\begin{equation}\label{5thEqAppD}
\begin{split}
\int_{x_+}^{+\infty} \phi' \theta' dx &= \phi' \theta\Big|_{x_+}^{+\infty} -\int_{x_+}^{+\infty} \phi'' \theta dx = -(\phi' \theta)(x_+) - \int_{x_+}^{+\infty} \phi'' \theta dx 
\end{split}
\end{equation}
and that 
\begin{equation}\label{6thEqAppD}
\begin{split}
\int_{x_+}^{+\infty} \rho_0\phi' \theta'dx &= - (\rho_0\phi' \theta)(x_+) - \int_{x_+}^{+\infty} (\rho_0\phi')' \theta dx.
\end{split}
\end{equation}
In view of \eqref{4thEqAppD}, \eqref{5thEqAppD} and \eqref{6thEqAppD}, we obtain 
\begin{equation}\label{7thEqAppD}
\begin{split}
&\int_{x_+}^{+\infty} \lambda\rho_0(k^2\phi \theta+\phi' \theta')dx+\mu\int_{x_+}^{+\infty}(k^4\phi \theta+2k^2\phi' \theta'+ \phi'' \theta'')dx \\
&= \int_{x_+}^{+\infty} (\lambda(k^2\rho_0\phi- (\rho_0\phi')')+ \mu(\phi^{(4)}-2k^2\phi''+k^4\phi))\theta dx  \\
&\qquad+ \mu (n_{11}^+\phi(x_+) + n_{12}^+\phi'(x_+)) \theta'(x_+) -\mu (n_{21}^+\phi(x_+) + n_{22}^+\phi'(x_+)) \theta(x_+) \\
&\qquad -2 k^2 \mu (\phi' \theta)(x_+) - \lambda (\rho_0\phi' \theta)(x_+)\\
&= \int_{x_+}^{+\infty} \frac{gk^2\rho_0'}{\lambda}\phi \theta dx  + BV_{x_+,\lambda}(\phi,\theta).
\end{split}
\end{equation}
Hence, \eqref{BVatX_+} follows from \eqref{7thEqAppD}.  Similarly, we obtain \eqref{BVatX_-}.
It follows from \eqref{BVatX_+} and \eqref{BVatX_-}  that 
\begin{equation}
\begin{split}
&\lambda  \int_{-\infty}^{+\infty}   \rho_0 (k^2\phi  \theta + \phi'  \theta') dx + \mu \int_{-\infty}^{+\infty} (\phi''  \theta'' + 2k^2 \phi  \theta' +k^4 \phi  \theta)dx \\
&= \int_{-\infty}^{x_-} \frac{gk^2\rho_0'}{\lambda}\phi \theta dx  + \int_{x_+}^{+\infty} \frac{gk^2\rho_0'}{\lambda}\phi \theta dx + BV_{x_-,\lambda}(\phi,\theta) +BV_{x_+,\lambda}(\phi,\theta) \\
&\qquad + \int_{x_-}^{x_+} \lambda\rho_0(k^2\phi \theta+\phi' \theta')dx+\mu\int_{x_-}^{x_+} (k^4\phi \theta+2k^2\phi' \theta'+ \phi'' \theta'')dx \\
&=  \int_{-\infty}^{x_-} \frac{gk^2\rho_0'}{\lambda}\phi \theta dx  + \int_{x_+}^{+\infty} \frac{gk^2\rho_0'}{\lambda}\phi \theta dx + \bB_{x_-,x_+,\lambda}(\phi,\theta).
\end{split}
\end{equation}
It yields \eqref{BilinearEquivalence}. 
\end{proof}

%\newpage

%\begin{center}
%\Large\textbf{Answers to referees}
%\end{center}

%\section*{Answer to Referee $\#$1}

%Yes, one can choose $x_-=-x_+$ to obtain a symmetric interval  as in the section with $\rho_0'$ of compact support. But we still use the notations $x_\pm$ in Lemma 4.1 and Proposition 4.2 to  obtain a criterion for $x_-$ and $x_+$ depending on the precise behavior of $\rho_0$ in Section 4.4. Such estimates can be used to derive results uniform  in $k$, $\lambda$ and to obtain better estimates in the high frequency regime, that's what we expect to prove $N(\epsilon_\star)\to \infty$ as $\epsilon_\star\to 0$.


\begin{thebibliography} {999999}


\bibitem{Cha61}
\textsc{S. Chandrasekhar}, Hydrodynamics and Hydromagnetic Stability, Oxford University Press, London, 1961.

\bibitem{CL00} 
\textsc{C. Cherfils and O. Lafitte}, Analytic solutions of the Rayleigh equation for linear density profiles \textsc{Phys. Rev. E} \textbf{62}, 2967 (2000)

\bibitem{CL81}
\textsc{E. A. Coddington and N. Levinson}, Theory of Ordinary Differential Equations, McGraw-Hill, New York, 1955.



\bibitem{GH03}
\textsc{Y. Guo, H. J. Hwang},
On the dynamical Rayleigh--Taylor instability, \textit{Arch. Rational Mech. Anal.} \textbf{167} (2003), pp. 235--253.

\bibitem{GT11}
\textsc{Y. Guo, I. Tice}, Linear Rayleigh-Taylor instability for viscous, compressible fluids, \textit{SIAM J. Math. Anal.} \textbf{42}  (2011), pp. 1688--1720.


\bibitem{Gre00}
\textsc{E. Grenier}, On the nonlinear instability of Euler and Prandtl equations, \textit{Commun. Pure Appl. Math.} \textbf{ 53}, (2000), pp.  1067--1091.

\bibitem{Hel10}
\textsc{B. Helffer},  Spectral Theory and its Applications (2013) (Cambridge Studies in Advanced Mathematics). Cambridge: Cambridge University Press. \texttt{doi:10.1017/CBO9781139505727}.

\bibitem{HL03}
\textsc{B. Helffer,  O. Lafitte}, Asymptotic methods for the eigenvalues of the Rayleigh equation for the linearized Rayleigh-Taylor instability, \textit{ Asymptotic Analysis} \textbf{ 33} (2003), pp. 189--235.

\bibitem{Kato}
\textsc{T. Kato}, Perturbation theory for linear operators, Springer-Verlag, Berlin, 1995, Reprint of the 1980 edition.

\bibitem{Kull91}
\textsc{H. Kull}, Theory of the Rayleigh-Taylor instability, \textit{Phys. Rep.} \textbf{206} (1991), pp. 197--325.

\bibitem{JJN13}
\textsc{F. Jiang, S. Jiang, G. Ni},
Nonlinear instability for nonhomogeneous incompressible viscous fluids, \textit{Sci. China Math.} \textbf{56} (2013),  pp 665--686.



\bibitem{Laf01}
\textsc{O. Lafitte}, Sur la phase lin\'eaire de l'instabilit\'e de Rayleigh-Taylor. S\'eminaire Equations aux D\'eriv\'ees Partielles du Centre de Math\'ematiques de l'Ecole Polytechnique, Ann\'ee 2000–2001.


\bibitem{Lin98}
\textsc{J. D. Lindl}, 1998. Inertial Confinement Fusion. Springer.



\bibitem{Mik96}
\textsc{K. O. Mikaelian}, Connection between the Rayleigh and the Schrödinger equations, \textit{Phys. Rev. E} \textbf{53}, 3551. 

\bibitem{Tai22_0}
\textsc{T.-T. Nguyễn}, Linear and nonlinear analysis of the Rayleigh-Taylor system with Navier-slip boundary conditions, preprint,  \texttt{https://arxiv.org/abs/2204.09857}. 

\bibitem{Tai22_1}
\textsc{T.-T. Nguyễn}, Nonlinear Rayleigh-Taylor instability of viscous wave in an infinitely deep ocean, work in progress.

\bibitem{Tai22_2}
\textsc{T.-T. Nguyễn}, Etude des instabilit\'es non lin\'eaires autour de solutions laminaires de syst\`emes d’EDP de la m\'ecanique des fluides ou des mod\`eles de propagation en biologie, PhD thesis, in preparation.

\bibitem{Rem00}
\textsc{B. A. Remington, R. P. Drake, H. Takabe, D. Arnett},  A review of astrophysics experiments on intense lasers, \textit{Phys. Plasmas} \textbf{7} (2000), pp. 1641--1652.


\bibitem{Str83}
\textsc{J.W. Strutt (Lord Rayleigh)}, Investigation of the character of the equilibrium of an incompressible heavy fluid of variable density, \textit{Proc. London Math. Soc.} \textbf{14} (1883), pp. 170--177.

\bibitem{Tay50}
\textsc{G. Taylor}, The instability of liquid surfaces when accelerated in a direction perpendicular to their planes, \textit{Proc. R. Soc. Lond. Ser. A} \textbf{201} (1950), pp. 192--196.


\bibitem{Zhou17_1}
\textsc{Y. Zhou},  Rayleigh–Taylor and Richtmyer–Meshkov instability induced flow, turbulence, and mixing. I., \textit{Phys. Rep.} \textbf{720–722} (2017), pp. 1--136.

\bibitem{Zhou17_2}
\textsc{Y. Zhou}, Rayleigh–Taylor and Richtmyer–Meshkov instability induced flow, turbulence, and mixing. II., \textit{Phys. Rep.} \textbf{723–725} (2017), pp. 1--160.

\end{thebibliography}
\end{document}